\newtheorem{theorem}{Theorem}
\newtheorem{lemma}[theorem]{Lemma}
\newtheorem{condition}{Condition}
\newtheorem{corollary}[theorem]{Corollary}
\theoremstyle{definition}
\newtheorem{definition}[theorem]{Definition}
\theoremstyle{remark}
\newtheorem{remark}{Remark}
\newenvironment{customthm}[1]
  {\innercustomthm}
  {\endinnercustomthm}
\providecommand{\customgenericname}{}
\newcommand{\R}{\mathbb{R}}
\newcommand{\E}{\mathbb{E}}
\newcommand{\argmin}{{\arg\hspace*{-0.5mm}\min}}
\newcommand{\I}{\mathrm{I}}
\newcommand{\sign}{\textrm{sign}}
\newcommand{\one}{\mathbbm{1}}
\newcommand{\Lin}[1]{{ \color{red}{Lin: #1}}}
\def\st{\mathrm{s.t.}}
\def\pr{\mathbb{P}}
\def\var{\mathrm{var}}
\def\est{\mathrm{DCal}}
\def\ora{\texttt{ora}}
\def\hpi{\hat{\pi}}
\def\tpi{\tilde{\pi}}
\def\r{\hat{r}}
\def\hmu{\hat{\mu}}
\def\htau{\hat{\tau}}
\def\bX{\mathbf{X}}
\def\bmu{\boldsymbol{\mu}}
\def\calN{\mathcal{N}}
\def\cB{\mathcal{B}}
\def\cD{\mathcal{D}}
\def\cA{\mathcal{A}}
\def\cB{\mathcal{B}}
\def\argmin{{\arg\hspace*{-0.5mm}\min}}
\def\bbP{\mathbb{P}}
\def\bbE{\mathbb{E}}
\def\dc{\textrm{DCal}}
\def\sc{\textrm{SCal}}
\def\DML{\mathrm{DML}}
\def\IPW{\mathrm{IPW}}
\def\ARB{\mathrm{ARB}}
\def\RCAL{\mathrm{RCAL}}
\def\TMLE{\mathrm{TMLE}}
\def\calL{\mathcal{L}}
\def\dipw{{\sc, r}}
\def\bcdot{\boldsymbol{\cdot}}
\def\cbps{\mathrm{hdCBPS}}
\def\IF{\mathsf{IF}}
\def\tr{\mathrm{tr}}
\newcommand{\mb}{\mathbf}
\newcommand{\mbb}{\boldsymbol}
\newcommand\independent{\protect\mathpalette{\protect\independenT}{\perp}}
\def\independenT#1#2{\mathrel{\rlap{$#1#2$}\mkern2mu{#1#2}}}
\newcommand{\vertiii}[1]{{\left\vert\kern-0.25ex\left\vert\kern-0.25ex\left\vert #1 
    \right\vert\kern-0.25ex\right\vert\kern-0.25ex\right\vert}}
\def\Holder{H\"{o}lder}
\def\aux{\mathrm{aux}}
\def\thickhline{%
  \noalign{\ifnum0=`}\fi\hrule \@height \thickarrayrulewidth \futurelet
   \reserved@a\@xthickhline}
\def\@xthickhline{\ifx\reserved@a\thickhline
               \vskip\doublerulesep
               \vskip-\thickarrayrulewidth
             \fi
      \ifnum0=`{\fi}}
\newlength{\thickarrayrulewidth}
\title{Root-n consistent semiparametric learning with high-dimensional nuisance parameters under minimal sparsity}
\author{Lin Liu$^1$\thanks{email: \href{linliu@sjtu.edu.cn}{linliu@sjtu.edu.cn}}, \ \ Xinbo Wang$^2$\thanks{email: \href{cinbo_w@sjtu.edu.cn}{cinbo\_w@sjtu.edu.cn}}, \ \ and Yuhao Wang$^{3, 4}$
	\thanks{email: \href{yuhaow@tsinghua.edu.cn}{yuhaow@tsinghua.edu.cn}. 
		 The authors contributed equally to this work, names are in alphabetical order. Correspondence should be addressed to YW: \href{yuhaow@tsinghua.edu.cn}{yuhaow@tsinghua.edu.cn}.
	}
}
\date{
	$^1$Institute of Natural Sciences, MOE-LSC, School of Mathematical Sciences, CMA-Shanghai, SJTU-Yale Joint Center for Biostatistics and Data Science, Shanghai Jiao Tong University \\
	$^2$Department of Bioinformatics and Biostatistics, School of Life Sciences, SJTU-Yale Joint Center for Biostatistics and Data Science, Shanghai Jiao Tong University \\
	$^3$Institute for Interdisciplinary Information Sciences, Tsinghua University \\
        $^4$Shanghai Artificial Intelligence Laboratory and Shanghai Qi Zhi Institute \\
	\vspace{1em}
	\today
}
\begin{document}

\maketitle

\begin{abstract}
Treatment effect estimation under unconfoundedness is a fundamental task in causal inference. In response to the challenge of analyzing high-dimensional datasets collected in substantive fields such as epidemiology, genetics, economics, and social sciences, various methods for treatment effect estimation with high-dimensional nuisance parameters (the outcome regression and the propensity score) have been developed in recent years. However, it is still unclear what is the necessary and sufficient sparsity condition on the nuisance parameters such that we can estimate the treatment effect at $1 / \sqrt{n}$-rate. In this paper, we propose a new Double-Calibration strategy that corrects the estimation bias of the nuisance parameter estimates computed by regularized high-dimensional techniques and demonstrate that the corresponding Doubly-Calibrated estimator achieves $1 / \sqrt{n}$-rate as long as one of the nuisance parameters is sparse with sparsity below $\sqrt{n} / \log p$, where $p$ denotes the ambient dimension of the covariates, whereas the other nuisance parameter can be arbitrarily complex and completely misspecified. The Double-Calibration strategy can also be applied to settings other than treatment effect estimation, e.g. regression coefficient estimation in the presence of a diverging number of controls in a semiparametric partially linear model, and local average treatment effect estimation with instrumental variables.
\end{abstract}

{\footnotesize \textbf{Keywords:} Causal inference, Multi-calibration, Covariate balancing, High-dimensional statistics, Sparsity, Debiased lasso}

\section{Introduction}
\label{sec:intro}

This article concerns the problem of efficient estimation of a parameter of scientific interest, denoted as $\tau \equiv \tau (\theta)$ under the semiparametric framework, 
where $\theta$ denotes the (potentially) high-dimensional nuisance parameters. A typical example of such parameters of interest is $\tau \equiv \E [Y (t)]$, which denotes the mean of a potential outcome $Y (t)$ with the binary treatment $T$ set to $t \in \{0, 1\}$ and is identifiable from the observed data under no unmeasured confounding. Since the average treatment effect (ATE) is just the contrast $\E [Y (1)] - \E [Y (0)]$, to simplify our exposition, we refer to $\tau = \E [Y (t)]$ as the ATE (with a slight abuse of terminology) and take $t = 1$ throughout. To estimate ATE under the no unmeasured confounding assumption, we need to account for the nuisance parameters $\theta$, which can be decomposed into two components $\theta \equiv (\pi, r)$: the Propensity Score (PS), denoted as $\pi (x) \coloneqq \E [T | X = x]$, and the Outcome Regression (OR) model, denoted as $r (x) \coloneqq \E [Y | X = x, T = 1]$. Here $X$ denotes the $p$-dimensional covariates. Although parameters other than ATE will also be covered in this paper, we mainly focus on ATE in the Introduction.

It is now well-established in modern semiparametric theory that the (nonparametric first-order) influence function \citep{robins1994estimation, hahn1998role} of $\tau$ is of the form
\begin{equation}
\label{if}
\IF (\theta) \equiv H (\theta) - \tau, \text{ where } H (\theta) = \frac{T}{\pi (X)} (Y - r (X)) + r (X),
\end{equation}
which gives rise to the celebrated Augmented Inverse Probability Weighting (AIPW) estimator. The AIPW estimator is Doubly-Robust (DR) \citep{scharfstein1999rejoinder, robins2001comments} and is $\sqrt{n}$-consistent and asymptotically normal when $\pi$ and $r$ are estimated at sufficiently fast rates. To establish these convergence rates, it is essential to impose certain complexity-reducing assumptions \citep{robins1997toward, ritov2014bayesian, liu2020nearly, liu2024assumption} on the nuisance parameters $\pi$ and $r$. 

During the past decade, in response to the challenge of high-dimensional datasets with (ambient) dimension $p$ potentially much greater than the sample size $N$ ($p \gg N$), a large body of literature in statistics and econometrics (\citet{farrell2015robust, shortreed2017outcome, van2017generally, chernozhukov2018double, ju2020robust, chernozhukov2022debiased, avagyan2021high, ning2020robust, tan2020regularized, tan2020model, tang2023ultra, athey2018approximate, hirshberg2021augmented, bradic2019sparsity, bradic2019minimax, wang2024debiased, dukes2021inference, sun2022high} and references therein) has been devoted to solving this problem by positing $\pi$ and/or $r$ to be sparse Generalized Linear Models (GLMs): $\pi (x) \equiv \phi (\gamma^{\top} x)$, $r (x) \equiv \psi (\beta^{\top} x)$ where $\gamma$ and/or $\beta$ has only a few non-zero coordinates, and $\phi$ and $\psi$ are two (possibly nonlinear) link functions. We further denote $s_{\pi} \coloneqq \Vert \gamma \Vert_{0}$ and $s_{r} \coloneqq \Vert \beta \Vert_{0}$ as the sparsities of $\pi$ and $r$.

A natural solution in this context is to first estimate $\gamma$ and $\beta$ by high-dimensional regularized regression approaches, and then estimate $\tau$ by taking the sample average of $H (\theta)$, with $\theta$ replaced by the above estimates of $\gamma$ and $\beta$. This route was indeed taken by some of the earlier works in this direction, such as \citet{farrell2015robust} and \citet{chernozhukov2018double}\footnote{\citet{chernozhukov2018double} alternatively interpreted this construction as Neyman-Orthogonalization, a terminology also adopted in later works such as \citet{mackey2018orthogonal} and \citet{foster2023orthogonal}.}. In particular, the seminal Double Machine Learning (DML) estimator proposed in \citet{chernozhukov2018double} splits the entire sample into two non-overlapping subsets, one used to estimate $\gamma, \beta$ by $\ell_{1}$-regularized regression, denoted as $\hat{\gamma}, \hat{\beta}$, and the other used to estimate $\tau$ by taking the sample average of $H (\hat{\theta})$, denoted as $\hat{\tau}_{\DML}$:
\begin{equation}
\label{dml}
\hat{\tau}_{\DML} \coloneqq \frac{1}{n} \sum_{i = 1}^{n} H_{i} (\hat{\theta}), \text{ where } \hat{\theta} = (\hat{\pi}, \hat{r}) \text{ and } \hat{\pi} (x) = \phi (\hat{\gamma}^{\top} x), \hat{r} (x) = \psi (\hat{\beta}^{\top} x).
\end{equation}
Sample-splitting de-correlates the randomness in the nuisance parameter estimates $\hat{\theta}$ and in $\hat{\tau}_{\DML}$, eliminating the reliance on Donsker-type conditions on $\theta$, which are violated by high-dimensional sparse GLMs \citep{chernozhukov2018double}. The information loss due to sample-splitting can then be restored by cross-fitting \citep{schick1986asymptotically, ayyagari2010applications, zheng2011cross, robins2013new}. In this paper, we will also adopt the sample-splitting strategy, and estimate $\gamma$ and $\beta$ from a separate training sample (denoted as $\mathcal{D}_{\tr}$, see later Section \ref{sec:dipw}) using $\ell_{1}$-regularized GLM techniques. To simplify exposition, we omit the cross-fitting step. We refer readers to \citet{chernozhukov2018double, jiang2022new} for a more thorough discussion on the potential benefits of cross-fitting.

Since one typically estimates $\pi$ and $r$ at rates $\sqrt{s_{\pi} \log p / n}$ and $\sqrt{s_{r} \log p / n}$ using $\ell_{1}$-regularized GLMs \citep{buhlmann2011statistics}, it is straightforward to show that the DML estimator $\hat{\tau}_{\DML}$ is $\sqrt{n}$-consistent and asymptotically normal as long as\footnote{Throughout this paper, we adopt the common asymptotic notation such as $\ll, \gg, \lesssim, \gtrsim, \asymp$, $o (\cdot), O (\cdot), o_{\mathbb{p}} (\cdot), O_{\mathbb{p}} (\cdot)$.} 
\begin{equation}
\label{dml sparsity}
s_{\pi} \cdot s_{r} \ll n / \log^{2} p. \tag{DML sparsity}   
\end{equation}
Such a requirement on the product of the two sparsities ($s_\pi \cdot s_r$) could be quite stringent: e.g. if $s_{\pi} \asymp \sqrt{n} / \log p$, the \eqref{dml sparsity} condition implies $s_{r} \ll \sqrt{n} / \log p$. As a result, statisticians and econometricians have been puzzled by the following question:
\begin{customthm}{Problem*}
\label{question}
\textit{What is the sufficient and necessary, and hence minimal, sparsity condition on $\pi$ and $r$ for the parameter of interest, $\tau$, to be estimable at $1 / \sqrt{n}$-rate; and can we construct a $\sqrt{n}$-consistent estimator of $\tau$ under this minimal sparsity condition?}
\end{customthm}

\subsection{Literature overview and our contribution}

Since \citet{chernozhukov2018double}, various attempts have been made to weaken the \eqref{dml sparsity} condition. \citet{chernozhukov2022debiased} showed that $s_{\pi} \wedge s_{r} \ll \sqrt{n} / \log p$ is sufficient for the DML estimator $\hat{\tau}_{\DML}$ to be $\sqrt{n}$-consistent, yet with an additional (strong) assumption that $\Vert \gamma \Vert_{1}$ and $\Vert \beta \Vert_{1}$ are bounded, 
imposing (potentially unnecessary) constraints on the magnitudes of non-zero coordinates of $\gamma$ and $\beta$. Without the bounded $\ell_{1}$-norm condition, by leveraging the idea of covariate balancing \citep{imai2014covariate, zubizarreta2015stable, chan2016globally, athey2018approximate, ben2021balancing, bruns2023augmented} and certain structures of the linear OR and logistic-linear PS models, \citet{bradic2019sparsity} relaxed the \eqref{dml sparsity} condition to $s_{\pi} \ll n / \log p, s_{r} \ll \sqrt{n} / \log p$ or $s_{\pi} \ll \sqrt{n} / \log p, s_{r} \ll n^{3 / 4} / \log p$. Similar result to that of \citet{chernozhukov2018double} was also obtained in \citet{smucler2019unifying} (the rate-double-robustness part)\footnote{It is worth noting that \citet{chernozhukov2018double}, \citet{smucler2019unifying} and \citet{bradic2019minimax} also considered the slightly more general approximately sparse GLMs. In order not to confuse readers, in the Introduction, we will not explicitly distinguish the approximately sparse GLMs from the exactly sparse GLMs and paraphrase the established results in the literature under exactly sparse GLMs.}. But the result of \citet{smucler2019unifying} is not only derived for ATE, but also for a large class of parameters -- conventionally termed as ``DR functionals'' \citep{liu2024assumption} (also see Section 5 of \citet{chernozhukov2022locally}) -- with influence functions resembling the form in \eqref{if} \citep{rotnitzky2021characterization}.

Also exploiting the idea of covariate balancing, \citet{athey2018approximate} constructed a $\sqrt{n}$-consistent and asymptotically normal estimator when the OR model $r$ is linear with sparsity $s_{r} \ll \sqrt{n} / \log p$, but allowing $\pi$ to be arbitrarily complex, in which case, to avoid notation clutter, we set $s_{\pi} = \infty$. \citet{hirshberg2021augmented} later generalized the results of \citet{athey2018approximate} from ATE to a strict subclass of doubly-robust functionals. Complementing the results of \citet{athey2018approximate}, \citet{wang2024debiased} obtained a $\sqrt{n}$-consistent and asymptotically normal estimator when the PS model $\pi$ is logistic-linear\footnote{
	In fact, the results in \citet{wang2024debiased} can also be generalised to nonlinear link functions other than expit. They have discussed this informally in their Section~5.1.} with sparsity $s_{\pi} \ll \sqrt{n} / \log p$, but allowing $r$ to be arbitrarily complex, in which case we set $s_{r} = \infty$. This is desirable as, in applications such as clinical medicine, one often has a better grasp on the treatment assignment mechanism than on the outcome mechanism. The above results, taken together, suggest that the minimal sparsity condition is likely
\begin{equation}
\label{minimal sparsity}
s_{\pi} \wedge s_{r} \ll \sqrt{n} / \log p, s_{\pi} \vee s_{r} \text{ is arbitrary.} \tag{minimal sparsity}
\end{equation}
As an extension of \citet{athey2018approximate}, \citet{bradic2019minimax} constructed an $\sqrt{n}$-consistent ATE estimator that additionally accommodates the setting where both the OR model and the best linear projection of the PS model is sparse.
It shall be noted that they also exhibited two separate $\sqrt{n}$-consistent and asymptotically normal estimators for a different parameter of interest: the ``regression coefficient'', which we will discuss further in Remark \ref{rem:newey} of Section~\ref{sec:slope}. 
\citet{bradic2019minimax} also established necessary conditions for the existence of $\sqrt{n}$-consistent estimators of several parameters including ATE, which is believed to be tight. The guessed \ref{minimal sparsity} condition is also inspired by this result. The difference is that the necessary condition of \citet{bradic2019minimax} is stated in terms of how close the best linear approximation with roughly $\sqrt{n}$ many covariates approximates at least one of the two nuisance parameters. 

\citet{tan2020model, ning2020robust, avagyan2021high, dukes2021inference} and \citet{smucler2019unifying} (the model-double-robustness part) also tackled this problem, striving for $\sqrt{n}$-consistency and asymptotic normal estimators under the \eqref{minimal sparsity} condition. However, these results are also not entirely satisfactory (theoretically) -- although they allow one of two true nuisance parameters to be arbitrarily dense, the limits of their estimates by $\ell_{1}$-regularized regression, $\hat{\pi}$ and $\hat{r}$, still need to be sparse. 

As one can see, despite the tremendous progress made so far, a fully satisfactory solution to \ref{question} remains elusive. In this paper, we address \ref{question} by constructing novel $\sqrt{n}$-consistent and asymptotically normal estimators, under the \eqref{minimal sparsity} condition. This new methodology is based on a Double-Calibration (DCal) strategy to correct the bias of the DML estimator $\hat{\tau}_{\DML}$. The basic idea of DCal is also rooted in covariate balancing, which will be elucidated in detail in Section \ref{sec:ate}. It is noteworthy that the idea of using calibration to correct for the estimation bias of PS and OR has also been used in the regularized calibrated estimator of \citet{tan2020model}. The difference is that \citet{tan2020model} estimated the nuisance parameters and calibrated the estimation bias simultaneously, but our approach first proposes initial estimates of PS and OR, and then calibrates these estimates \textit{sequentially} via solving carefully-crafted constrained optimization problems.
Our new DCal estimators do not need to know which one of the two nuisance parameters, $\pi$ or $r$, is a GLM with sparsity below $\sqrt{n} / \log p$. Furthermore, the DCal estimator is not only applicable to the ATE, but also works for the regression coefficient, answering an open question raised in \citet{bradic2019minimax}. To the best of our knowledge, for both ATE and regression coefficient, no alternative estimators have been constructed before, that are $\sqrt{n}$-consistent and asymptotically normal under the \eqref{minimal sparsity} condition. Table \ref{tab:state-of-the-arts} directly compares some of the aforementioned results with ours. However, we want to point out an important caveat of our theoretical results -- we only establish $\sqrt{n}$-consistency and asymptotic normality of the DCal estimator under the \eqref{minimal sparsity} condition, but not its semiparametric efficiency \citep{newey1990semiparametric, jankova2018semiparametric}. Since the latter begs for establishing asymptotic lower bounds with tight constants, which is a much more involved problem, we leave this important question to future works.

\subsection{Notation and organization of the paper}
Before moving forwards, we outline some notation that is used throughout. We denote $\theta$ as the nuisance parameters. All expectations and probabilities (e.g. $\E$ and $\mathbb{P}$) are understood to be under the true data generating distribution unless stated otherwise (in that case, we will attach $\theta$ in the subscripts (e.g. $\E_{\theta}$ and $\mathbb{P}_{\theta}$) to emphasize the dependence of the distribution on $\theta$). As will be clear later in Section \ref{sec:ate}, we may split the entire dataset into three parts, a main dataset $\cD$ of size $n$, an auxiliary dataset $\cD_{\aux}$ of size $n_{\aux}$, and a training dataset $\cD_{\tr}$ of size $n_\tr$. 
For instance, $\mathbf{T} \equiv (T_1, \cdots, T_n)^\top$ and $\mathbf{X} = (X_1, \cdots, X_n)^\top$, respectively, denote the treatment variables and the covariates over the main dataset. The quantities computed from a given dataset will be attached, in the subscripts, by the index of that dataset. For example, the $n_{\tr} \times p$ design matrix from the training dataset will be denoted as $\bX_{\tr}$, and similarly for $\bX_{\aux}$. For each random variable from the training or the auxiliary dataset, we attach a superscript to distinguish them from the main dataset, e.g. $X_{i}^{(\aux)}$ is the covariate for the $i$-th subject from the auxiliary dataset. Occasionally, we use the empirical mean operators over the above three datasets, respectively denoted as $\mathbb{P}_{n}, \mathbb{P}_{n_{\tr}}$ and $\mathbb{P}_{n_{\aux}}$. We then let $\Vert X \Vert_{n, 2} \coloneqq (\{\mathbb{P}_{n} X_{1}^{2}\}^{1 / 2}, \cdots, \{\mathbb{P}_{n} X_{p}^{2}\}^{1 / 2})^{\top}$ denote the column-wise $\ell_{2}$-norms of a given matrix $\bX$. With a slight abuse of notation, for an $n \times p$-matrix $\mathbf{V}$ with its rows being $n$ i.i.d. copies $V_{1}, \cdots, V_{n}$ of a $p$-dimensional random vector $V$, we let $\Vert V \Vert_{n, 2}$ denote the $p$-dimensional vector of the column-wise $\ell_{2}$-norms of $\mathbf{V}$. Finally, given two vectors $v_1, v_2$ of the same length, $v_1 \odot v_2$ denotes the component-wise multiplication between $v_1$ and $v_2$.

In the sequel, we will frequently utilize an $\ell_{\infty}$-calibration approach realized by optimization programs with many inequality constraints. Since different optimization programs will be designed for different parameters, to streamline the notation, we introduce the following $\ell_{\infty}$-calibration operator. To this end, we first define the following two versions of short-hand notation for residuals: given generic random variables $S$ and $g$, $\Delta_{1, S, g}^{(a)} \coloneqq S^{(a)} - g$ and $\Delta_{2, S, g} \coloneqq S / g - 1$ where $a \in \{\aux, \mathrm{main}\}$ denotes the source of the dataset. As before, if $a = \mathrm{main}$, we silence the superscript. We deliberately choose not to define the second version with superscript $(a)$ because it will not depend on the auxiliary dataset in all appearances in our paper. With these, we define the $\ell_{\infty}$-calibration operator as follows:
\begin{equation}
\label{cal operator}
\hat{\bm{\varrho}} \coloneqq \calL_{\bm{\varrho}, \mathcal{C}_{\bm{\varrho}}} \left( \mathsf{f}; \Delta; \mathrm{h}; \mathrm{b} \right) \coloneqq \argmin_{\bm{\varrho} \in \mathcal{C}_{\bm{\varrho}}: |\mathbb{P}_{n} [\Delta \cdot \mathrm{h}]| \leq \mathrm{b}} \mathsf{f} (\bm{\varrho}).
\end{equation}
We now unpack the above definition. Here $\mathcal{C}_{\bm{\varrho}} \subset \R^{k}$ is the constraint set of $\bm{\varrho}$ and $\mathsf{f}: \R^{k} \rightarrow \R_{\geq 0}$ is the objective function. $\Delta$, depending on $\bm{\varrho}$, is the residual that we would like to calibrate such that it satisfies certain constraints that will be clear later. When $\Delta = \Delta_{1, S, g}^{(\aux)}$, to avoid clutter, we slightly abuse notation and define $\bbP_{n} [\Delta_{1, S, g}^{(\aux)} \cdot \mathrm{h}] \coloneqq \bbP_{n_{\aux}} [S^{(\aux)} \mathrm{h}^{(\aux)}] - \bbP_{n} [g \mathrm{h}]$. Here $\mathrm{h} \in \R^{l}$ denotes the (random) test functions that the residuals $\Delta^{(a)}$ are calibrated against and $\mathrm{b} \in \R^{l}$ of the same dimension as $\mathrm{h}$ denotes the upper bounds that the calibration aims to fulfill via inequality constraints. As will be clear later, $\mathrm{b}$ can be allowed to be data-dependent, hence possibly random.

The rest of this paper is organized as follows. In Section \ref{sec:ate}, we illustrate our new methodology for ATE, with the theoretical analysis detailed in Section \ref{sec:atethm}. Section \ref{sec:extensions} extends our new approach to (1) the regression coefficient estimation in a high-dimensional partially linear model, (2) the estimation of local average treatment effect with instrumental variables with high-dimensional observed confounding variables, and (3) the approximately sparse GLMs considered in \citet{bradic2019minimax}. In Section \ref{sec:sim}, for the problem of estimating ATE, we examine the finite-sample performance of a variant of the DCal estimator that is computationally more feasible. Compared against several popular methods, our new method exhibits comparable, and sometimes improved statistical properties and computational runtime. The code for replicating our simulation studies can be accessed via the link \href{https://github.com/Cinbo-Wang/DCal}{https://github.com/Cinbo-Wang/DCal}. Lastly, Section \ref{sec:conclusions} concludes this article and discusses some future directions. Some of the technical details are deferred to the Appendix.

\begin{table}[ht]
    \centering
    \begin{tabular}{c|c}
    	\thickhline
    	Paper & condition(s) on $s_{\pi}$ and $s_{r}$ \\
    	\thickhline
    	\citet{chernozhukov2018double} & $s_{\pi} s_{r} = o (n / \log^{2} p)$ \\
    	\hline
    	\multirow{2}{*}{\citet{chernozhukov2022debiased}} & $s_{\pi} \wedge s_{r} = o (\sqrt{n} / \log p)$ \\
    	& but $\Vert \beta \Vert_{1}$ and $\Vert \gamma \Vert_{1}$ are bounded \\
    	\hline
    	\citet{smucler2019unifying} & \multirow{2}{*}{$s_{\pi} s_{r} = o (n / \log^{2} p)$} \\
    	rate-double-robustness & \\
    	\hline
    	\multirow{2}{*}{\citet{bradic2019sparsity}} & $s_{\pi} = o (n / \log p), s_{r} = o (\sqrt{n} / \log p)$ \\
    	& or $s_{\pi} = o (\sqrt{n} / \log p), s_{r} = o (n^{3 / 4} / \log p)$ \\
    	\hline
    	\citet{athey2018approximate} & $s_{r} = o (\sqrt{n} / \log p)$, $s_\pi$ arbitrary \\
    	\hline
    	\citet{wang2024debiased} & $s_{\pi} = o (\sqrt{n} / \log p)$, $s_r$ arbitrary \\
    	\hline
    	\citet{smucler2019unifying} & $s_{\pi} \wedge s_{r} = o (\sqrt{n} / \log p)$, $s_{\pi} \vee s_{r}$ arbitrary \\
    	model-double-robustness & but the estimator of the denser model has sparse limit \\
    	\hline
    	Our result & $s_{\pi} \wedge s_{r} = o (\sqrt{n} / \log p)$, $s_{\pi} \vee s_{r}$ arbitrary \\
    	\thickhline
    \end{tabular}
    \caption{A comparison between our theoretical results and other related works on ATE estimation. Note that, for example, ``$s_\pi$ arbitrary'' means that $s_\pi$ can be as large as $\infty$, namely that $\pi$ does not necessarily need to follow a GLM. In fact, it can be arbitrarily complex. The results of \citet{bradic2019minimax} are difficult to be accommodated into this table due to the limited space and we refer interested readers directly to our literature overview for details.}
    \label{tab:state-of-the-arts}
\end{table}

\section{Average treatment effects: The Double-Calibration estimator}
\label{sec:ate}

In this section, we illustrate our new ``Double-Calibration (DCal)'' methodology for ATE estimation. We observe data $\{O_{i} \coloneqq (X_i, T_i, Y_i)\}_{i=1}^N$ that are i.i.d. draws from some underlying data-generating distribution $\bbP_{\theta^{\ast}}$ where from this section onward $\theta^{\ast} \coloneqq (\pi^{\ast}, r^{\ast})$ denotes the true OR and PS models. We assume throughout that, as standard in the literature on treatment effect estimation, $\bbP_{\theta^{\ast}}$ satisfies the unconfoundedness assumption:
\begin{condition}\label{cd:ignorability}
\[
Y_i(t) \independent T_i \mid X_i \text{ almost surely}, \text{ for $t \in \{0, 1\}$},
\]
\end{condition}
\noindent where $Y_i(1)$ and $Y_i(0)$ are two potential outcomes; and the overlap/positivity assumption:
\begin{condition}\label{cd:overlap}
There is an absolute constant $c_\pi \in (0, 0.5)$, such that $c_\pi < \pi^*(X) < 1 - c_\pi$ almost surely.
\end{condition}
\noindent It is well-known that, under Consistency, Conditions \ref{cd:ignorability} and \ref{cd:overlap}, the parameter of interest, ATE, can be identified as $\tau^\ast \equiv \E [r^{\ast} (X)] \equiv \E \left[ \frac{T Y}{\pi^{\ast} (X)} \right] \equiv \E [H (\theta^\ast)]$. We further impose the following modeling assumption on the nuisance parameters $\theta^{\ast} = (\pi^{\ast}, r^{\ast})$: 
\begin{condition}\label{cond:nuis}\leavevmode
There exist two (nonlinear) monotonically increasing, twice-differentiable link functions $\phi, \psi$ with uniformly bounded first and second derivatives, such that {\bf either} of the following holds:
\begin{enumerate}
\item[(i)] The OR model follows a GLM with (nonlinear) link $\psi$: $r_i^* \equiv r^* (X_i) \coloneqq \E(Y_i \mid X_i, T_i = 1) \equiv \psi (X_i^\top \beta^*)$;
\item[(ii)] The PS model follows a GLM with (nonlinear) link $\phi$: $\pi_i^* \equiv \pi^* (X_{i}) \coloneqq \pr(T_i = 1 \mid X_i) \equiv \phi (X_i^\top \gamma^*)$.
\end{enumerate}
Since $\pi^{\ast} \in (0, 1)$, we also restrict $\phi$ such that $\phi(t) \to 1$ as $t \to \infty$, and $\phi(t) \to 0$ as $t \to -\infty$.
\end{condition}
\noindent In words, Condition \ref{cond:nuis} states that at least one of the two nuisance parameters is truly a GLM, whereas the other one can be arbitrarily complex. We restrict the range of $\phi$ to respect the fact that $\pi$ is a probability, but this is not essential for the theoretical results of this paper.

In the rest of this section and Section \ref{sec:atethm}, we consider to have access to three separate datasets: the main dataset $\cD \coloneqq \{(X_i, T_i, Y_i)\}_{i=1}^n$, the auxiliary dataset $\cD_{\aux} \coloneqq \{(X_{\aux, i}, T_{\aux, i}, Y_{\aux, i})\}_{i=1}^{n_{\aux}}$, and the training dataset $\cD_\tr \coloneqq \{(X_{\tr, i}, T_{\tr, i}, Y_{\tr, i})\}_{i=1}^{n_\tr}$ with $n \asymp n_{\aux} \asymp n_{\tr}$. Such a setup is equivalent to splitting the entire sample with size $N = n + n_{\aux} + n_\tr$ into three non-overlapping subsets. As mentioned in the Introduction, cross-fitting can restore the information loss due to sample splitting, but we choose to omit this step to simplify the exposition. Under this simplified observation scheme, we instead consider the target parameter of interest as the ATE averaged over the main dataset $\cD$\footnote{It is also legitimate to choose two other alternatives: $\bar{\tau}^{\ast, 1} \coloneqq n^{-1} \sum_{i = 1}^n T_i Y_i / \pi^\ast (X_i)$ or $\bar{\tau}^{\ast, 2} \coloneqq n^{-1} \sum_{i = 1}^n H_i (\theta^\ast)$. We decided to go with $\bar{\tau}^{\ast}$ only for convenience.}:
\begin{equation} \label{ate target}
\bar{\tau}^{\ast} \coloneqq \frac{1}{n} \sum_{i = 1}^{n} r^{\ast} (X_i).
\end{equation}

\subsection{Preliminary: Singly-calibrating the OR model}
\label{sec:dipw}

To lay the ground, we start by considering the following estimator that ``singly'' calibrates the OR model, denoted as $\hat{\tau}_{\dipw}$:
\begin{align}
\label{sc}
\hat{\tau}_\dipw \coloneqq \frac{1}{n} \sum_{i=1}^n H_i (\hat{\pi}, \tilde{r}) \equiv \frac{1}{n} \sum_{i=1}^n \left(\frac{T_i (Y_i - \tilde{r}_i)}{\hat{\pi}_i} + \tilde{r}_i\right), \text{ where } \tilde{r}_i \coloneqq \hat{r}_i + \hat{\mu}_i.
\end{align}
Recall that $\r (x) \coloneqq \psi(x^\top \hat{\beta})$ is simply the plug-in GLM OR estimate with $\hat{\beta}$ estimated from the training dataset $\cD_\tr$. For the estimated PS model $\hat{\pi}$, we change the definition slightly and apply a trimming operation on the index $x^\top \hat{\gamma}$:
\[
\hat{\pi} (x) \coloneqq \phi \left(x^\top \hat{\gamma} \one\{|x^\top \hat{\gamma}| \le M_\gamma\} + \sign(x^\top \hat{\gamma})  M_\gamma \one\{|x^\top \hat{\gamma}| > M_\gamma\}\right),
\]
where $\sign (\cdot)$ is the function that returns the sign of the input variable. As in the OR case, $\hat{\gamma}$ is also estimated from the dataset $\cD_\tr$; $M_\gamma$ is either chosen a priori or is chosen adaptively from $\cD_\tr$. With such trimming operation, the overlap condition always holds for the estimated PS model. When the PS model is correctly specified and follows some sparsity constraint, one can prove that with probability converging to $1$, $\hat{\pi}_i \equiv \phi(X_i^\top \hat{\gamma})$, i.e., such trimming is moot (see e.g. Section~\ref{sec:prop}); indeed, the trimming is only helpful when the PS model is misspecified.
We now turn to $(\hat{\mu}_1, \cdots, \hat{\mu}_n)^\top \eqqcolon \hat{\mbb\mu}$, which is the solution to the following constrained program:
\begin{align} 
	\hat{\mbb\mu} = & \;\argmin_{\mbb\mu \in \R^n} \|\mbb\mu\|_2^2 \tag{6a} \label{dipw_obj} \\
	\st \;\; & \left\|\frac{1}{n_{\aux}} {\bX}_{\aux}^\top {\mbb \Pi}_{\aux} \tilde{\mathbf R}_{\aux} - \frac{1}{n} {\bX}^\top {\mbb \Pi} {\mbb \mu} \right\|_\infty \leq \eta_r, \tag{6b} \label{eq:dipwprogram} \\
 & \left\|{\mbb \mu}\right\|_\infty \le M_r \tag{6c} \label{eq:dipwinfinity}
\end{align}
where $M_r > 0$ is some known constant that depends on the bounds on $\hat{r}$ and $r^{\ast}$, $\bX \in \R^{n \times p}$ is the covariate matrix, ${\mbb \Pi} \in \R^{n \times n}$ is a diagonal matrix whose $(i, i)$-th entry is $\phi'(X_i^\top \hat{\gamma}) / \hat{\pi}_i$ and $\tilde{\mathbf R}$ is the $n$-dimensional residual vector whose $i$-th element is $\tilde{R}_i \coloneqq T_i / \hat{\pi}_i \cdot (Y_i - \r_i)$. Recalling from the Notation section, the quantities $\bX_\aux$ etc. are defined analogously on the auxiliary dataset $\cD_\aux$. In addition, it is understood that if $\mbb\mu \equiv \bm{0}$ has already satisfied the constraints \eqref{eq:dipwprogram} and \eqref{eq:dipwinfinity}, the above program will simply output $\hat{\mbb\mu} = \bm{0}$, or equivalently, $\tilde{r}_{i} \equiv \hat{r}_{i}$ for all $i = 1, \cdots, n$.

Since we have introduced the $\ell_{\infty}$-calibration operator \eqref{cal operator}, we can equivalently represent the above constrained program \eqref{dipw_obj} to \eqref{eq:dipwinfinity} more compactly as follows:
\addtocounter{equation}{+1}
\begin{equation}
\label{sc operator}
\hat{\bm{\mu}} \coloneqq \calL_{\bm{\mu}, B_{\infty}^{n} (M_{r})} \left( \Vert \cdot \Vert_{2}^{2}; \Delta^{(\aux)}_{1, T (Y - \hat{r}) / \hat{\pi}, \mu}; \phi' (X^{\top} \hat{\gamma}) X / \hat{\pi}; \mathbbm{1}_{p} \eta_{r} \right)
\end{equation}
where $B_{\infty}^{n} (M_{r})$ denotes the $\ell_{\infty}$-ball in $\R^{n}$ with radius $M_{r}$. The correspondence of the above display to the generic $\ell_{\infty}$-calibration operator $\calL_{\bm{\varrho}, \mathcal{C}_{\bm{\varrho}}} (\mathsf{f}; \Delta; \mathrm{h}; \mathrm{b})$ defined previously is as follows:
\begin{align*}
\bm\varrho & \leftarrow \bm\mu, \mathsf{f} (\cdot) \leftarrow \Vert \cdot \Vert_{2}^{2}, \mathcal{C}_{\bm{\varrho}} \leftarrow B_{\infty}^{n} (M_{r}), \\ 
\Delta & \leftarrow \Delta^{(\aux)}_{1, T (Y - \hat{r}) / \hat{\pi},  \mu }, \mathrm{h} \leftarrow \phi' (X^{\top} \hat{\gamma}) X/ \hat{\pi}, \mathrm{b} \leftarrow \mathbbm{1}_{p} \eta_{r}.
\end{align*}

\begin{remark}
\label{rem:dipw}
$\hat{\tau}_\dipw$ is the same as the DML estimator $\hat{\tau}_{\DML}$ except $\hat{r}$ is replaced by the post-calibration OR estimator $\tilde{r} = \hat{r} + \hat{\mu}$. It is in fact a variant of the recently proposed DIPW estimator for ATE \citep{wang2024debiased}. Constraint \eqref{eq:dipwprogram} balances $\tilde{r}$ such that it is on average close to $Y$, gauged by covariates $X$ under certain data-dependent weights. The use of the auxiliary dataset $\cD_{\aux}$ is to preserve the following moment equations satisfied by the calibrated OR estimator $\tilde{r}$:
\begin{equation}
\label{moment_ps}
\frac{1}{n} \sum_{i = 1}^n \left( \frac{T_i}{\pi^{\ast} (X_i)} - 1 \right) \hat{\mu}_i, \text{ and thus } \frac{1}{n} \sum_{i = 1}^n \left( \frac{T_i}{\pi^{\ast} (X_i)} - 1 \right) \tilde{r}_i \text{ have mean zero},
\end{equation}
because $\hat{\mu}_i$'s, as constructed, only depends on $X$, but not on $T$ or $Y$, from the main dataset. Note that \eqref{moment_ps} in fact holds with any quantity, that is independent of $T_i$ given $X_i$ from the main dataset $\cD$, in place of $\hat{\mu}_i$. The above moment equation plays a key role in the analysis of the statistical property of $\hat{\tau}_\dipw$ in \citet{wang2024debiased}.
\end{remark}

Using the analysis in the proof of \citet[Theorem~2]{wang2024debiased}, since $\hat{\beta}$ and $\hat{\gamma}$, computed using $\cD_\tr$, are independent from the main sample $\cD$, by choosing $\eta_r \asymp \sqrt{\frac{\log p}{n}}$, $\hat{\tau}_\dipw$ can be shown to be a $\sqrt{n}$-consistent estimator of $\bar{\tau}^\ast$ whenever the PS-sparsity $s_\pi \ll \sqrt{n} / \log p$, under Conditions \ref{cd:ignorability} to \ref{cond:nuis}.

Nevertheless, to achieve $\sqrt{n}$-rate under the \eqref{minimal sparsity} condition, $\hat{\tau}_{\sc, r}$ needs to be modified so that it is also $\sqrt{n}$-consistent if the OR-sparsity $s_r \ll \sqrt{n} / \log p$ but the PS model is arbitrary. In the next Section \ref{sec:dcal}, we will complete this remaining piece.

\subsection{Double calibration}
\label{sec:dcal}

$\hat{\mbb \mu}$ corrects the OR model misspecification in the initial OR estimate $\hat{r}$ and $\tilde{r} = \hat{r} + \hat{\mu}$ can be viewed as the post-calibrated OR estimator. To correct the misspecification of the PS model, we also need to calibrate the initial PS estimate $\hat{\pi}$. To that end, we propose the following Doubly-Calibrated (DCal) estimator for ATE:
\[
\hat{\tau}_{\est} \coloneqq \frac{1}{n} \sum_{i=1}^n H_i (\tilde{\pi}, \tilde{r}) \equiv \frac{1}{n} \sum_{i=1}^n \left(\frac{T_i(Y_i - \tilde{r}_i)}{\tpi_i} + \tilde{r}_i \right).
\]
Apparently, it is the same as the OR-calibrated estimator proposed in the previous section, except that $\hat{\pi}_i$ is replaced by $\tpi_i = \phi(\tilde{X}_i^\top \tilde{\gamma})$, where $\tilde{X}_i$ is the same as $X_i$ if $p \geq n$, or $\tilde{X}_i$ is equal to an $n$-dimensional vector with $X_i$ augmented by some synthetic covariates of dimension $n - p$ if $p < n$. For convenience, here the synthetic covariates are randomly generated from the uniform distribution over the interval $[-1, 1]$ but other strategies are also possible. This covariate augmentation ensures that almost surely there is a vector $\tilde\gamma^*$ such that $\pi_i^*$, the true PS evaluated at the $i$-th sample, satisfies $\pi_i^* = \phi(\tilde{X}_i^\top \tilde\gamma^*)$, no matter whether or not $\pi^\ast$ follows a GLM with link $\phi$. Writing $\bX_j, \tilde{\bX}_j$ as the $j$-th column of data matrices $\bX \in \R^{n \times p}, \tilde{\bX} \in  \R^{n \times (n \vee p)}$, similar to $\hat{\bm{\mu}}$, $\tilde{\gamma}$ solves the following constrained program:
\begin{align}
	\tilde{\gamma} \coloneqq & \ \argmin_\gamma \|\gamma - \hat{\gamma}\|_1 \tag{9a} \label{eq:program}\\
	\st \ & \left|\frac{1}{n} \sum_{i=1}^n \left(\frac{T_i}{\pi_i} - 1 \right) \psi'(X_i^\top \hat{\beta})X_{i,j}\right| \leq \eta_{\pi_1} \cdot \frac{\|\psi'(\bX \hat{\beta}) \odot \bX_j\|_2}{\sqrt{n}} \quad \forall\; 1 \le j \le p \tag{9b} \label{eq:const1}\\
	& \left|\frac{1}{n} \sum_{i=1}^n \left(\frac{T_i}{\pi_i} - 1 \right) \hmu_i\right| \leq \eta_{\pi_1} \cdot \frac{\|\hat{\bmu}\|_2}{\sqrt{n}} \tag{9c} \label{eq:const2}\\
	& \left|\frac{1}{n} \sum_{i=1}^n \left(\frac{T_i}{\pi_i} - 1 \right) \tilde{X}_{i,j}\right| \leq \eta_{\pi_2} \cdot \frac{\|\tilde{\bX}_j\|_2}{\sqrt{n}} \quad \forall\; 1 \le j \le p \vee n \tag{9d} \label{eq:constprop}\\
	& \max_{i} T_i / \pi_i \le M_\pi \tag{9e} \label{eq:const3}.
\end{align}
In this program, $\pi_i \equiv \pi (X_i) \coloneqq \phi(\tilde{X}_i^\top \gamma)$ and $M_{\pi} > 0$ is some known constant that depends on $c_\pi$\footnote{In~\eqref{eq:const1}, we use the convention that a function $g: \R^p \mapsto \R$ applied to a matrix ${\mbb M} \in \R^{n \times p}$ ($p$ could be equal to $1$) is a $n$-dimensional vector formed by applying $g$ to each row of the matrix.}. Notice also that in the above program, when $n > p$, then with a slight abuse of notation, we let $\hat{\gamma}$ denote the $n$ dimensional vector with first $p$ element equal to the original estimate $\hat{\gamma}$, and the rest $n - p$ coordinates equal to zero. Similar to the OR calibration step, we can compactly represent the above PS calibration step using the $\ell_{\infty}$-calibration operator:
\addtocounter{equation}{+1}
\begin{equation}
\label{ps operator}
\tilde{\gamma} \coloneqq \calL_{\gamma, \tilde{B}_{\infty}^{p} (M_{\pi})} \left( \Vert \cdot - \hat{\gamma} \Vert_{1}; \Delta_{2, T, \phi (X^{\top} \gamma)}; \mathrm{h}; \mathrm{b} \right)
\end{equation}
where
\begin{align*}
\tilde{B}_{\infty}^{p} (M_{\pi}) & \coloneqq \left\{ \gamma: \Vert T / \phi (\tilde{X}^{\top} \gamma) \Vert_{\infty} \leq M_{\pi} \right\}, \mathrm{h} \coloneqq \left( \psi' (X^{\top} \hat{\beta}) X^{\top}, \hat{\mu}, \tilde{X}^{\top} \right)^{\top}, \\
\text{and } \mathrm{b} & \coloneqq \left( \frac{\eta_{\pi_1}}{\sqrt{n}} \|\psi' (X^{\top} \hat{\beta}) X \|_{n, 2}, \frac{\eta_{\pi_{1}} \Vert \hat{\bm{\mu}} \Vert_{2}}{\sqrt{n}}, \frac{\eta_{\pi_{2}}}{\sqrt{n}} \Vert \tilde{X} \Vert_{n, 2} \right)^{\top},
\end{align*}
where we recall that, for an $n \times p$-matrix $\mathbf{V}$ with its rows being $n$ i.i.d. copies $V_{1}, \cdots, V_{n}$ of a $p$-dimensional random vector $V$, we let $\Vert V \Vert_{n, 2}$ denote the $p$-dimensional vector of the column-wise $\ell_{2}$-norms of $\mathbf{V}$. In particular, if the initial estimator $\hat{\gamma}$ has already satisfied all the inequality constraints, \eqref{ps operator} will simply output $\hat{\gamma}$ without further solving the optimization problem.


An appealing property of \eqref{eq:program} is that it is always guaranteed to have a feasible solution, even without any assumptions on the sparsity of the PS or the OR model:

\begin{lemma}\label{lem:program}
Suppose Condition~\ref{cd:overlap} holds, then with $\eta_{\pi_1} \asymp \sqrt{\frac{\log p}{n}}, \eta_{\pi_2} \asymp \sqrt{\frac{\log p \vee n}{n}}$ large enough, and with $M_\pi \ge c_\pi^{-1}$, we have that given any full rank $\bX$ and $(\hat{\mbb \mu}, \hat{\gamma}, \hat{\beta})$, with probability converging to $1$, any $\tilde\gamma^*$ that satisfies $\pi_i^* \equiv \phi(\tilde{X}_i^\top \tilde\gamma^*)$ for all $i$ is a feasible solution to~\eqref{eq:program}. 
\end{lemma}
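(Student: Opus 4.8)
\textbf{Proof strategy for Lemma~\ref{lem:program}.} The plan is to substitute $\gamma=\tilde\gamma^*$ into the program and verify each of \eqref{eq:const1}--\eqref{eq:const3} separately, so that feasibility follows by a union bound. The crucial simplification is that at $\gamma=\tilde\gamma^*$ one has $\pi_i\equiv\phi(\tilde X_i^\top\tilde\gamma^*)=\pi_i^*$, so every left-hand side becomes a functional of the oracle residuals $\xi_i\coloneqq T_i/\pi_i^*-1$; under Conditions~\ref{cd:ignorability}--\ref{cd:overlap}, $\E[\xi_i\mid X_i]=0$, $|\xi_i|\le c_\pi^{-1}$, and — since the $N$ units are i.i.d.\ — $\xi_i$ given $X_i$ is independent of the covariates of the other main-sample units and of everything in $\cD_\aux$ and $\cD_\tr$. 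I would first record that a qualifying $\tilde\gamma^*$ exists on the stated full-rank event: for $p\ge n$, full row rank of $\bX$ makes $\bX\gamma=\phi^{-1}(\pi^*)$ solvable ($\phi$ being invertible by monotonicity); for $p<n$, the synthetic $\mathrm{Unif}[-1,1]$ columns render $\tilde\bX$ square and a.s.\ invertible, so $\tilde\bX\gamma=\phi^{-1}(\pi^*)$ is solvable. Any such $\tilde\gamma^*$ induces the same $\pi_i^*$, so it suffices to check the constraints once. The trivial one, \eqref{eq:const3}, then holds surely on the overlap event: $\pi_i=\pi_i^*>c_\pi$ forces $\max_i T_i/\pi_i\le c_\pi^{-1}\le M_\pi$.

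Next, the three balancing constraints \eqref{eq:const1}, \eqref{eq:const2}, \eqref{eq:constprop} are all instances of $|n^{-1}\sum_{i=1}^n\xi_i w_i|\le\eta\,\|\mathbf{w}\|_2/\sqrt n$ for a weight vector $\mathbf{w}$ measurable with respect to $(\bX,\tilde\bX,\cD_\aux,\cD_\tr)$: respectively $w_i=\psi'(X_i^\top\hat\beta)X_{i,j}$, $w_i=\hat\mu_i$ (and $\hat{\boldsymbol\mu}$ is, by its construction in \eqref{dipw_obj}--\eqref{eq:dipwinfinity}, a deterministic function of $(\bX,\cD_\aux,\cD_\tr)$ with $\|\hat{\boldsymbol\mu}\|_\infty\le M_r$), and $w_i=\tilde X_{i,j}$. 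Conditioning on those data, $\sum_i\xi_i w_i$ is a sum of independent mean-zero variables, each bounded by $c_\pi^{-1}\max_i|w_i|$ and with total conditional variance $\sum_i w_i^2(1-\pi_i^*)/\pi_i^*$, which equals $\|\mathbf{w}\|_2^2$ up to constants depending only on $c_\pi$; Bernstein's inequality then bounds the conditional probability of violating the constraint by $2\exp\!\big(-c\,\eta^2 n/(1+\eta\sqrt n\,\max_i|w_i|/\|\mathbf{w}\|_2)\big)$ with $c=c(c_\pi)>0$. Taking $\eta=\eta_{\pi_1}\asymp\sqrt{\log p/n}$ in \eqref{eq:const1}--\eqref{eq:const2} and $\eta=\eta_{\pi_2}\asymp\sqrt{(\log p\vee n)/n}$ in \eqref{eq:constprop}, with the implicit constants chosen large, the exponent is of order $\log p$, respectively $\log p\vee n\ge\log(p\vee n)$, provided the Bernstein correction term $\eta\sqrt n\,\max_i|w_i|/\|\mathbf{w}\|_2$ is $O(1)$; a union bound over the $O(p\vee n)$ constraints, followed by integrating out the conditioning, delivers feasibility with probability tending to one.

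The step I expect to be the main obstacle is precisely the control of the correction term $\eta\sqrt n\,\max_i|w_i|/\|\mathbf{w}\|_2$, i.e.\ ruling out ``spiky'' weight vectors with one dominant coordinate: since $\eta\sqrt n\asymp\sqrt{\log(p\vee n)}$, the trivial bound $\max_i|w_i|\le\|\mathbf{w}\|_2$ degrades the exponent to order $\sqrt{\log p}$, which cannot absorb a union bound over $\sim p$ constraints. What is actually required is $\max_i|w_i|/\|\mathbf{w}\|_2=o(1/\sqrt{\log(p\vee n)})$, and this follows from the regularity conditions maintained throughout Section~\ref{sec:atethm} — bounded (or sub-Gaussian) covariates, non-degenerate covariate second moments so that $\|\mathbf{w}\|_2\asymp\sqrt n$ with high probability, and $\log p=o(n)$ — rather than from the bare hypotheses of the lemma, so I would invoke them at this point. (If one wishes to keep the hypotheses minimal, Bernstein can be replaced by a self-normalized moderate-deviation bound for the weighted centered-Bernoulli sum $\sum_i(T_i-\pi_i^*)w_i/\pi_i^*$, but the same ``no dominant coordinate'' requirement reappears.)
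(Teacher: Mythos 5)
Your overall structure matches the paper's proof: existence of $\tilde\gamma^*$ via invertibility of $\phi$ on $(c_\pi,1-c_\pi)$ and full row rank of $\tilde{\bX}$ (so that $\tilde{\bX}\gamma=\phi^{-1}(\bm{\pi}^*)$ is solvable), the observation that \eqref{eq:const3} holds deterministically once $\pi_i=\pi_i^*>c_\pi$ and $M_\pi\ge c_\pi^{-1}$, and verification of \eqref{eq:const1}--\eqref{eq:constprop} by conditioning on $\{\bX,\cD_\aux,\cD_\tr\}$ so that $W_i\coloneqq T_i/\pi_i^*-1$ are independent, mean-zero, bounded, and the weights are fixed, followed by a concentration bound and a union bound over the $O(p\vee n)$ constraints. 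This is exactly the paper's argument.

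The one substantive point where you go astray is the ``main obstacle'' you flag: the control of $\eta\sqrt n\,\max_i|w_i|/\|\mathbf w\|_2$ is an artifact of your choice of Bernstein's inequality, not a genuine difficulty, and the extra regularity conditions you propose to import are not needed. Since each $W_i$ takes values in an interval of length at most $c_\pi^{-1}$, Hoeffding's inequality (equivalently, the sub-Gaussian Chernoff bound the paper invokes) gives, conditionally,
\begin{equation*}
\pr\left(\Big|\sum_{i=1}^n W_i w_i\Big|>\eta\sqrt n\,\|\mathbf w\|_2\right)\le 2\exp\!\left(-2c_\pi^2\,\eta^2 n\right),
\end{equation*}
with no dependence whatsoever on the shape of $\mathbf w$: the denominator in Hoeffding is $\sum_i w_i^2(b_i-a_i)^2\le c_\pi^{-2}\|\mathbf w\|_2^2$, which is exactly the quantity the constraints are self-normalized by. Bernstein's linear correction term only buys an improvement when the conditional variance is much smaller than the squared range, which is not the case here (both are $\asymp\|\mathbf w\|_2^2$ up to $c_\pi$-dependent constants), and it costs you the spurious ``no dominant coordinate'' requirement. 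With the Hoeffding exponent $\asymp\eta^2 n$, choosing the implicit constants in $\eta_{\pi_1}\asymp\sqrt{\log p/n}$ and $\eta_{\pi_2}\asymp\sqrt{(\log p\vee n)/n}$ large enough makes the union bound over $p$ (respectively $p\vee n$) constraints vanish, under only the hypotheses actually stated in the lemma. With that substitution your proof is complete and coincides with the paper's.
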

Note that in Lemma~\ref{lem:program}, we say that ``$\bX$ is full rank'' whenever the rank of $\bX$ equals $\min\{n, p\}$, i.e., whenever it is of full row/column rank. When the program is not feasible, one can simply set $\gamma$ as $\hat{\gamma}$. Noteworthy, Lemma~\ref{lem:program} does not necessarily require $\pi^*$ follow a GLM with link $\phi$. In fact, it can be any function satisfying Condition~\ref{cd:overlap}. 

\begin{remark}
\label{rem:dc}
$\hat{\tau}_\est$ is the same as $\hat{\tau}_\dipw$ except now $\hat{\pi}$ is also replaced by the post-calibration PS estimator $\tilde{\pi} (x) = \phi (\tilde{\gamma}^{\top} x)$. Constraints \eqref{eq:const1} to \eqref{eq:constprop} in the program \eqref{eq:program} balances $\tilde{\pi}$ such that $T / \tilde{\pi}$ is on average close to $1$, gauged by covariates $\bX$ (and the augmented covariates $\mathbf{\tilde{X}}$) and the OR calibrator $\bm{\hat{\mu}}$ under certain data-dependent weights. There is no need to use the auxiliary dataset in the constrained program \eqref{eq:program} because the following moment equation holds as long as $\tilde{\pi}_i$ does not depend on $Y_i$:
\begin{equation}
\label{moment_or}
\frac{1}{n} \sum_{i = 1}^n \frac{T_i}{\tilde{\pi} (X_i)} (Y_i - r_i^\ast) \text{ has mean zero}.
\end{equation}
Obviously, the program \eqref{eq:program} is unrelated to $Y$ from the main dataset. The moment equations \eqref{moment_or} and \eqref{moment_ps} in Remark \ref{rem:dipw} are direct consequence of the double-robustness of the influence function $\IF (\theta)$ defined in \eqref{if}. 

It is worth noting that $\hat{\tau}_{\dc}$ calibrates OR and PS in order, and this ordering is vital for the two moment equations \eqref{moment_ps} and \eqref{moment_or} to hold. Were the ordering reversed, with $\hat{\bm{\mu}}$ calibrated using $\tilde{\pi}$ which in turn depends on $\mathbf{T}$ in the main sample, it would not have been as straightforward to show that moment equation \eqref{moment_ps} holds (even approximately).
\end{remark}

\section{Average treatment effects: theoretical analysis}
\label{sec:atethm}

In this section, we derive the asymptotic result for the proposed DCal estimator $\hat{\tau}_{\est}$. For simplicity, throughout this section and the next section, we take $n \equiv n_{\aux} \equiv n_\tr$ and assume throughout that we are under an asymptotic regime where $p \to \infty$ as $n \to \infty$ and that $\log p / \sqrt{n} = o(1)$. We also assume throughout the following conditions on the design matrix and outcomes as well as the estimated outcome regression model. Recall that $\hat{\pi}_{\aux}$ and $\hat{r}_{\aux}$ appears in the constraint \eqref{eq:dipwprogram} when calibrating the OR model.
\begin{condition}\label{cd:all}
We have that
\begin{itemize}
    \item[(i)] For some constant $m_Y > 0$, almost surely, $|Y(1)|, |r^*(X)| \le m_Y$;
    \item[(ii)] The first component of $X$ is $1$, representing an intercept term. Denoting by $V \in \R^{p - 1}$ the remaining components of $X$, we assume $\E V = 0$ and almost surely, $\|V\|_\infty \le m_V$. Moreover, $\bX$ is full rank and there exists a constant $\sigma_V$ such that for any unit vector $u \in \R^{p - 1}$ and all $\alpha \in \R$, $\E \{\exp(\alpha u^\top V)\} \le \exp(\alpha^2 \sigma_V^2 / 2)$.
\end{itemize}
\end{condition}

\begin{condition}\label{cd:est}
	There exists a constant $m_r > 0$ such that with probability converging to $1$, $\pr(|\psi(X^\top \hat{\beta})| \le m_r \mid \hat{\beta}) = 1$, where $X$ is a random vector that is independent from $\hat{\beta}$ and follows the same distribution as the $X_i$'s.
\end{condition}

Notice that as we have mentioned in the comments after Lemma~\ref{lem:program}, we say that ``$\bX$ is full rank'' if the rank of $\bX$ equals $\min\{n, p\}$. 
Notice also that the $\ell_\infty$ constraint on $V$ can be removed by imposing additional constraints in the optimization programs~\eqref{dipw_obj} and~\eqref{eq:program}. The purpose of the boundedness assumption imposed on $Y$ is for us to focus on the main thesis of the paper -- constructing a $\sqrt{n}$-consistent estimator of $\tau$ under \eqref{minimal sparsity}. We expect that similar conclusions should hold under light-tail assumptions such as sub-Gaussianity. 
In the rest of this section, we first prove that $\hat{\tau}_{\est}$ is $\sqrt{n}$-consistent when the OR follows a sparse GLM model in Section~\ref{sec:reg}, followed by a proof of its $\sqrt{n}$-consistency when the PS follows a sparse GLM model in Section~\ref{sec:prop}.

\subsection{Theoretical properties under the sparse OR model}\label{sec:reg}

Recall that our target of interest is the ATE over the main sample $\bar{\tau}^\ast$ in \eqref{ate target}. In this section, we prove the $\sqrt{n}$-consistency of $\hat{\tau}_{\est}$ for estimating $\bar{\tau}^\ast$ when only the OR model is sparse. To achieve this goal, we invoke the following conditions on the nuisance parameter estimates computed from the training and the auxiliary datasets. 

Moreover, we impose the following assumption on the estimated OR coefficient $\hat{\beta}$ computed from the training sample $\cD_{\tr}$:


\begin{condition}\label{cd:reg}
Condition~\ref{cond:nuis} (i) holds true and there exists a constant $m_\psi$ such that for any $t$, $\psi'(t), \psi''(t) \le m_\psi$. Moreover, $\beta^*$ has sparsity $s_r$ and with probability converging to $1$,
 \[
    \|\hat{\beta} - \beta^*\|_1 = O \left(s_r \sqrt{\frac{\log p}{n}}\right) \quad\&\quad \|\hat{\beta} - \beta^*\|_2 = O \left(\sqrt{s_r \frac{\log p}{n}}\right).
\]
\end{condition}
Condition \ref{cd:reg} can be met by using common $\ell_{1}$-regularized techniques such as the lasso \citep{tibshirani1996regression}, square-root lasso \citep{belloni2011square}, SLOPE \citep{su2016slope,BLT18} when the penalized loss function satisfies certain regularity conditions such as strong convexity.



We then have the following important observation (Lemma \ref{lem:mu}): when the initial OR estimate $\r$ is close to the true OR $r^\ast$, correspondingly $\|\hat{\mbb \mu}\|_2$ is small. This observation tells us that the OR-calibration is harmless when $\r$ is already close to $r^\ast$. This is appealing because in practice, we generally have no idea if $\r$ converges to $r^\ast$ at a sufficiently fast rate even under the \eqref{minimal sparsity} condition.



\begin{lemma}\label{lem:mu}
Suppose 
Conditions~\ref{cd:ignorability}--\ref{cd:overlap} and \ref{cd:all}--\ref{cd:reg}
hold with $s_r = o(\sqrt{n} / \log p)$ and let the constant $M_\gamma$ be given; then by choosing $\eta_r \asymp \sqrt{\frac{\log p}{n}}$, $M_r, M_\gamma \asymp 1$ sufficiently large, $\|\hat{\bmu}\|_2 = o_\pr(n^{1/4})$. 
\end{lemma}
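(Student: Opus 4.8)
The plan is to exploit that $\hat{\bm\mu}$ is, by definition, the minimum-$\ell_2$-norm point of the feasible region of~\eqref{dipw_obj}, i.e.\ of the polytope cut out by~\eqref{eq:dipwprogram} and~\eqref{eq:dipwinfinity}. It therefore suffices to exhibit \emph{one} data-dependent vector $\bm\mu^\dagger\in\R^n$ that is feasible for~\eqref{eq:dipwprogram}--\eqref{eq:dipwinfinity} with probability tending to one and satisfies $\|\bm\mu^\dagger\|_2=o_\pr(n^{1/4})$; then $\|\hat{\bm\mu}\|_2\le\|\bm\mu^\dagger\|_2$ on that event, which is the claim. The candidate I would take is the oracle OR correction $r^*(X_i)-\hat r_i$, reweighted by the inverse estimated propensity score:
\[
\mu_i^\dagger \;\coloneqq\; \frac{\pi^*(X_i)}{\hat{\pi}_i}\bigl(r^*(X_i)-\hat{r}_i\bigr),\qquad i=1,\dots,n .
\]
The reweighting by $\pi^*(X_i)/\hat{\pi}_i$ is the one non-obvious ingredient, and its role will become clear when checking~\eqref{eq:dipwprogram}.

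To bound $\|\bm\mu^\dagger\|_2$ I would combine two standard facts. First, by Condition~\ref{cd:reg} (in particular $0\le\psi'\le m_\psi$) and the mean value theorem, $|r^*(X_i)-\hat r_i|=|\psi(X_i^\top\beta^*)-\psi(X_i^\top\hat\beta)|\le m_\psi\,|X_i^\top(\hat\beta-\beta^*)|$; since $\pi^*(X_i)\le 1-c_\pi$ by Condition~\ref{cd:overlap} and $\hat\pi_i\ge\phi(-M_\gamma)>0$ by the trimming built into $\hat\pi$, this yields $\|\bm\mu^\dagger\|_2^2\lesssim\|\bX(\hat\beta-\beta^*)\|_2^2$. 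Second, writing $n^{-1}\|\bX(\hat\beta-\beta^*)\|_2^2=(\hat\beta-\beta^*)^\top(n^{-1}\bX^\top\bX)(\hat\beta-\beta^*)$ and replacing $n^{-1}\bX^\top\bX$ by $\E[XX^\top]$ up to an $O_\pr(\sqrt{\log p/n})$ error in entrywise maximum norm (a standard consequence of Condition~\ref{cd:subg}), the $\E[XX^\top]$ part contributes $O_\pr(\|\hat\beta-\beta^*\|_2^2)=O_\pr(s_r\log p/n)$ (using $\lambda_{\max}(\E[XX^\top])\lesssim 1\vee\sigma_Z^2$) and the remainder contributes $O_\pr(\sqrt{\log p/n}\,\|\hat\beta-\beta^*\|_1^2)=O_\pr(s_r^2(\log p)^{3/2}/n^{3/2})$, which is of smaller order because $s_r\sqrt{\log p/n}=o(1)$ under $s_r=o(\sqrt n/\log p)$. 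Hence $\|\bm\mu^\dagger\|_2=O_\pr(\sqrt{s_r\log p})=o_\pr(n^{1/4})$, again using $s_r=o(\sqrt n/\log p)$.

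It remains to check feasibility. Constraint~\eqref{eq:dipwinfinity} is immediate: $\|\bm\mu^\dagger\|_\infty\le(1-c_\pi)\,\phi(-M_\gamma)^{-1}(m_Y+m_r)\le M_r$ once $M_r$ is taken to be a large enough constant, by Conditions~\ref{cd:y} and~\ref{cd:est}. For the balancing constraint~\eqref{eq:dipwprogram}, set
\[
\mathbf v^\ast\;\coloneqq\;\E\bigl[X\,\phi'(X^\top\hat\gamma)\,\pi^*(X)\,\hat\pi(X)^{-2}\bigl(r^*(X)-\hat r(X)\bigr)\,\big|\,\cD_\tr\bigr].
\]
Conditionally on $\cD_\tr$, $\tfrac1n\bX^\top\bm\Pi\bm\mu^\dagger$ is an average of i.i.d.\ terms with mean $\mathbf v^\ast$ by construction of $\mu_i^\dagger$; and $\tfrac1{n_\aux}\bX_\aux^\top\bm\Pi_\aux\tilde{\mathbf R}_\aux$ is \emph{also} an average of i.i.d.\ terms with mean $\mathbf v^\ast$, because conditioning its summands on $(X_{\aux,i},\cD_\tr)$ and invoking consistency and Condition~\ref{cd:ignorability} gives $\E[T_{\aux,i}(Y_{\aux,i}-\hat r_{\aux,i})\mid X_{\aux,i},\cD_\tr]=\pi^*(X_{\aux,i})(r^*(X_{\aux,i})-\hat r_{\aux,i})$ --- exactly the extra factor $\pi^*/\hat\pi$ carried by $\mu_i^\dagger$. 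I would then write $\tfrac1{n_\aux}\bX_\aux^\top\bm\Pi_\aux\tilde{\mathbf R}_\aux-\tfrac1n\bX^\top\bm\Pi\bm\mu^\dagger$ as the difference of the two centered averages and bound each in $\ell_\infty$ by a union bound over the $p$ coordinates. The auxiliary-sample average has i.i.d.\ summands bounded by an absolute constant --- the factor $T_{\aux,i}$ annihilates a summand unless $T_{\aux,i}=1$, on which event $|Y_{\aux,i}-\hat r_{\aux,i}|\le m_Y+m_r$ while $\phi'$, $\hat\pi_\aux^{-1}$ and $\|X_\aux\|_\infty$ are all $O(1)$ --- so Hoeffding gives a deviation of order $\sqrt{\log p/n}$. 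The main-sample average has i.i.d.\ summands of size $O_\pr(s_r\sqrt{\log p/n})$ with variance $O_\pr(s_r\log p/n)$, so Bernstein gives a deviation of order $\sqrt{s_r}\,\log p/n+s_r(\log p)^{3/2}/n^{3/2}$, which is $o_\pr(\sqrt{\log p/n})$ precisely because $s_r=o(\sqrt n/\log p)$. Choosing the constant in $\eta_r\asymp\sqrt{\log p/n}$ large enough then makes both deviations smaller than $\eta_r/2$ with probability tending to one, so $\bm\mu^\dagger$ is feasible and the lemma follows.

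The step I expect to be the main obstacle is controlling this main-sample balancing error: a crude bound (e.g.\ Cauchy--Schwarz against $n^{-1/2}\|\bX(\hat\beta-\beta^*)\|_2$) only yields $o(n^{-1/4})$, which is far above the tolerance $\eta_r\asymp\sqrt{\log p/n}$; getting down to $o(\eta_r)$ forces one to use \emph{both} the centering around $\mathbf v^\ast$ --- so that the $O_\pr(n^{-1/4})$ deterministic piece is cancelled rather than bounded --- \emph{and} the variance-aware Bernstein inequality, and it is here (and in the final norm estimate) that the scaling $s_r=o(\sqrt n/\log p)$ is used essentially tightly. A secondary item needing care is the measurability/tower-property bookkeeping that legitimizes treating $\tfrac1{n_\aux}\bX_\aux^\top\bm\Pi_\aux\tilde{\mathbf R}_\aux$ and $\tfrac1n\bX^\top\bm\Pi\bm\mu^\dagger$ as i.i.d.\ averages with the \emph{same} mean $\mathbf v^\ast$; this relies on the mutual independence of $\cD_\tr$, $\cD_\aux$, $\cD$ and on $\hat{\bm\mu}$ (hence $\bm\mu^\dagger$) and $\hat\pi$ not depending on $(\mathbf T,\mathbf Y)$ from the main sample.
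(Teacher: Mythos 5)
Your proof is correct and follows essentially the same route as the paper's: you take the identical oracle candidate $\mu_i^{\dagger}=\pi^*(X_i)(r_i^*-\hat r_i)/\hat\pi_i$ (the paper's $\hat\mu_{\ora,i}$), invoke minimality of $\hat{\bm\mu}$ over the feasible set, and bound $\|\bm\mu^{\dagger}\|_2\lesssim\|\bX(\hat\beta-\beta^*)\|_2=O_{\pr}(\sqrt{s_r\log p})$ exactly as in the paper. The only difference is that the paper delegates the feasibility of the oracle candidate to a citation of \citet[Theorem~19]{wang2020debiased}, whereas you carry out that verification explicitly (correctly, via the common conditional mean of the two averages in \eqref{eq:dipwprogram} and Hoeffding/Bernstein bounds).
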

The claim of Lemma \ref{lem:mu} is expected as $\hat{\bm\mu}$, by design, calibrates the estimation error $r_i^* - \r_i$. When $\r_i$ estimates $r_i^*$ with high accuracy, intuitively one should expect the calibrator $\hat{\bmu}$ to be negligible. Now equipped with the above result, using Taylor expansion, we can decompose $\hat{\tau}_\est - \bar{\tau}^\ast$ as
\begin{align*}
	\hat{\tau}_\est & - \bar{\tau}^\ast \\
	& \approx \frac{1}{n} \sum_{i=1}^n \left(\frac{T_i}{\tpi_i} - 1\right) \psi'(X_i^\top \hat{\beta}) X_i^\top (\hat{\beta} - \beta^*) + \frac{1}{n} \sum_{i=1}^n \left(\frac{T_i}{\tpi_i} - 1\right) \hmu_i + \frac{1}{n} \sum_{i=1}^n \frac{T_i \varepsilon_i(1)}{\tpi_i}, 
\end{align*}
where $\varepsilon_i(1) \coloneqq Y_i(1) - r^*(X_i)$ is the residual noise of the random variable $Y_i(1)$. As explained in Remark \ref{rem:dc}, by design of the program \eqref{eq:program}, the last term is mean zero and asymptotically normal. Therefore the first two terms constitute the bias. By applying \Holder{}'s inequality and using the constraint of $\tpi_i$ in~\eqref{eq:const1} and~\eqref{eq:const2}, we have that the first two terms are, with probability converging to one, of order
\[
s_r \frac{\log p}{n} + \frac{\sqrt{\log p}}{n} \|\hat{\bmu}\|_2.
\]
Now in light of Lemma~\ref{lem:mu}, we further have that the bias is of order 
\[
	s_r \frac{\log p}{n} + \sqrt{s_r} \frac{\log p}{n},
\]
which is asymptotically negligible whenever $s_r = o(\sqrt{n} / \log p)$. Putting together the above arguments, we establish Theorem~\ref{thm:reg} below. 


\begin{theorem}\label{thm:reg} Suppose 
Conditions~\ref{cd:ignorability}--\ref{cd:overlap} and \ref{cd:all}--\ref{cd:reg} hold with $s_r = o(\sqrt{n} / \log p)$ and let the constant $M_\gamma$ be given, by choosing $\eta_r, \eta_{\pi_1} \asymp \sqrt{\frac{\log p}{n}}, \eta_{\pi_2} \asymp \sqrt{\frac{\log p \vee n}{n}}, M_r, M_\pi \asymp 1$ sufficiently large, we have the following representation
	\[
	\sqrt{n} (\hat{\tau}_\est - \bar{\tau}^\ast) = \frac{1}{\sqrt{n}} \sum_{i = 1}^n \frac{T_i \varepsilon_i(1)}{\tpi_i} + o_\pr(1).
	\]
\end{theorem}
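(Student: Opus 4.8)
The plan is to render rigorous the heuristic Taylor expansion displayed just before the theorem; the ``$\approx$'' there conceals only a second-order remainder, which will turn out to be the delicate term. I start from the \emph{exact} identity, obtained from Consistency and $Y_i(1) = r^\ast(X_i) + \varepsilon_i(1)$,
\[
\hat\tau_\est - \bar\tau^\ast \;=\; \frac1n\sum_{i=1}^n\Bigl(\frac{T_i}{\tpi_i}-1\Bigr)\bigl(r^\ast(X_i) - \tilde r_i\bigr) \;+\; \frac1n\sum_{i=1}^n\frac{T_i\varepsilon_i(1)}{\tpi_i}.
\]
Since $\tilde r_i = \psi(X_i^\top\hat\beta) + \hmu_i$ and, under Condition~\ref{cond:nuis}(i), $r^\ast(X_i) = \psi(X_i^\top\beta^\ast)$, a second-order Taylor expansion of the twice-differentiable link $\psi$ around $X_i^\top\hat\beta$ gives
\[
r^\ast(X_i) - \tilde r_i \;=\; \psi'(X_i^\top\hat\beta)\,X_i^\top(\beta^\ast-\hat\beta) \;+\; \tfrac12\,\psi''(\xi_i)\bigl(X_i^\top(\beta^\ast-\hat\beta)\bigr)^2 \;-\; \hmu_i
\]
for some $\xi_i$ between $X_i^\top\beta^\ast$ and $X_i^\top\hat\beta$, so that the first sum above splits \emph{exactly} as $(\mathrm{I}) + (\mathrm{II}) - (\mathrm{III})$, i.e.\ the linear term, the quadratic remainder, and the $\hmu$ term. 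Since the theorem only asserts an asymptotically linear representation (no central limit theorem for the leading term is claimed), it suffices to show $\sqrt n\,(\mathrm{I})$, $\sqrt n\,(\mathrm{II})$ and $\sqrt n\,(\mathrm{III})$ are all $o_\pr(1)$. All the bounds below are argued on the event $\mathcal E_n$ --- which has probability tending to one by Lemma~\ref{lem:program} (applicable since Conditions~\ref{cd:overlap} and~\ref{cd:subg} hold) --- on which the program~\eqref{eq:program} is feasible, hence $\tpi$ obeys~\eqref{eq:const1}--\eqref{eq:const3}, intersected with the events of probability tending to one in Conditions~\ref{cd:est},~\ref{cd:reg} and Lemma~\ref{lem:mu}. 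The tuning parameters are those prescribed in Lemmas~\ref{lem:program} and~\ref{lem:mu}; note that~\eqref{eq:constprop} and $\eta_{\pi_2}$ play no role here, being needed only for feasibility and for the sparse-PS result of Section~\ref{sec:prop}.

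For $(\mathrm{I})$, expand $X_i^\top(\beta^\ast-\hat\beta) = \sum_{j=1}^p X_{i,j}(\beta^\ast_j-\hat\beta_j)$ and combine \Holder's inequality with constraint~\eqref{eq:const1}:
\[
|(\mathrm{I})| \;\le\; \|\hat\beta-\beta^\ast\|_1\max_{1\le j\le p}\Bigl|\frac1n\sum_{i=1}^n\Bigl(\frac{T_i}{\tpi_i}-1\Bigr)\psi'(X_i^\top\hat\beta)X_{i,j}\Bigr| \;\le\; \|\hat\beta-\beta^\ast\|_1\,\eta_{\pi_1}\max_{1\le j\le p}\frac{\|\psi'(\bX\hat\beta)\odot\bX_j\|_2}{\sqrt n}.
\]
Using $\psi'\le m_\psi$ and $\|Z\|_\infty\le m_Z$ almost surely (Conditions~\ref{cd:reg},~\ref{cd:subg}), $\|\psi'(\bX\hat\beta)\odot\bX_j\|_2/\sqrt n = O(1)$ a.s.; together with $\eta_{\pi_1}\asymp\sqrt{\log p/n}$ and $\|\hat\beta-\beta^\ast\|_1 = O(s_r\sqrt{\log p/n})$ (Condition~\ref{cd:reg}) this gives $(\mathrm{I}) = O_\pr(s_r\log p/n)$, whence $\sqrt n\,(\mathrm{I}) = O_\pr(s_r\log p/\sqrt n) = o_\pr(1)$ since $s_r = o(\sqrt n/\log p)$. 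For $(\mathrm{III})$, constraint~\eqref{eq:const2} gives $|(\mathrm{III})|\le\eta_{\pi_1}\|\hat{\bmu}\|_2/\sqrt n$, so with $\|\hat{\bmu}\|_2 = o_\pr(n^{1/4})$ from Lemma~\ref{lem:mu} we get $\sqrt n\,(\mathrm{III}) = o_\pr\bigl(\sqrt{\log p}\,n^{-1/4}\bigr) = o_\pr(1)$, using $\log p = o(\sqrt n)$ (implied by $1\le s_r = o(\sqrt n/\log p)$).

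The main obstacle is the quadratic remainder $(\mathrm{II})$: the crude bound $|X_i^\top(\beta^\ast-\hat\beta)|\le\|X_i\|_\infty\|\hat\beta-\beta^\ast\|_1$ only yields $\sqrt n\,(\mathrm{II}) = O_\pr(s_r^2\log p/\sqrt n)$, which is \emph{not} $o_\pr(1)$ at the stated sparsity. Instead, using $|T_i/\tpi_i - 1|\le M_\pi$ from~\eqref{eq:const3} and $\psi''\le m_\psi$,
\[
|(\mathrm{II})| \;\lesssim\; \frac1n\sum_{i=1}^n\bigl(X_i^\top(\beta^\ast-\hat\beta)\bigr)^2 \;=\; (\hat\beta-\beta^\ast)^\top\hat\Sigma\,(\hat\beta-\beta^\ast), \qquad \hat\Sigma\coloneqq\frac1n\bX^\top\bX,
\]
and I would then split $(\hat\beta-\beta^\ast)^\top\hat\Sigma(\hat\beta-\beta^\ast)\le\|\Sigma\|_{\mathrm{op}}\|\hat\beta-\beta^\ast\|_2^2 + \|\hat\Sigma-\Sigma\|_\infty\|\hat\beta-\beta^\ast\|_1^2$, where $\Sigma\coloneqq\E[XX^\top]$. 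Condition~\ref{cd:subg} (sub-Gaussianity in every unit direction) gives $\|\Sigma\|_{\mathrm{op}} = O(1)$, and a routine union bound with Bernstein's inequality for the bounded/sub-Gaussian entries of $\bX$ gives $\|\hat\Sigma-\Sigma\|_\infty = O_\pr(\sqrt{\log p/n})$. Combined with $\|\hat\beta-\beta^\ast\|_2 = O(\sqrt{s_r\log p/n})$ and $\|\hat\beta-\beta^\ast\|_1 = O(s_r\sqrt{\log p/n})$ from Condition~\ref{cd:reg}, this gives $(\mathrm{II}) = O_\pr(s_r\log p/n) + O_\pr\bigl(s_r^2(\log p)^{3/2}/n^{3/2}\bigr)$, so $\sqrt n\,(\mathrm{II}) = O_\pr(s_r\log p/\sqrt n) + O_\pr\bigl(s_r^2(\log p)^{3/2}/n\bigr) = o_\pr(1)$ whenever $s_r = o(\sqrt n/\log p)$. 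Assembling the three estimates on $\mathcal E_n$ yields $\sqrt n(\hat\tau_\est - \bar\tau^\ast) - \frac1{\sqrt n}\sum_{i=1}^n\frac{T_i\varepsilon_i(1)}{\tpi_i} = o_\pr(1)$, as claimed.
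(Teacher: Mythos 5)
Your proof is correct and follows essentially the same route as the paper: the identical exact decomposition into the linear term, the second-order Taylor remainder, the $\hmu$ term, and the mean-zero term, with the same constraint-plus-\Holder{} bounds for the first and third pieces. The only (minor) difference is in the quadratic remainder $(\mathrm{II})$, where you control $(\hat\beta-\beta^\ast)^\top\hat\Sigma(\hat\beta-\beta^\ast)$ via the operator norm of $\Sigma$ plus an entrywise deviation bound on $\hat\Sigma-\Sigma$, whereas the paper applies Bernstein's inequality directly to the sub-exponential variables $(X_i^\top(\hat\beta-\beta^\ast))^2$ conditional on the training sample; both yield $(\mathrm{II})\lesssim\|\hat\beta-\beta^\ast\|_2^2$ up to negligible corrections.
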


Let $\bar{\sigma}_r^2 \coloneqq n^{-1} \sum_{i=1}^n T_i \var(\varepsilon_i(1) \mid X_i) / \tpi_i^2$. Informally, Theorem~\ref{thm:reg} states that with sufficiently large $n$, with high probability, we can obtain by conditioning on $\{(\bX, \mathbf{T}), \cD_{\aux}, \cD_\tr\}$ that
\[
\sqrt{n} (\hat{\tau}_\est - \bar{\tau}^\ast) \mid \{(\bX, \mathbf{T}), \cD_{\aux}, \cD_\tr\} \approx \calN (0, \bar{\sigma}_r^2). 
\]
Once we further have the assumption that almost surely, $\var(\varepsilon_i(1) \mid X_i)$ is bounded above by some constant, we have that with probability converging to $1$, $\bar{\sigma}_r$ is $O (1)$. This means that $\hat{\tau}_\est$ is a $\sqrt{n}$-consistent estimator for $\bar{\tau}^\ast$. In fact, our convergence guarantee is quite similar to the one given by \citet{athey2018approximate}, except that they replaced $1 / \tpi_i$ by some weights learned by solving a convex program.

\begin{remark}
\label{rem:var reg}
As mentioned in the Introduction, one missing piece of Theorem \ref{thm:reg} is if the asymptotic linear representation can achieve the semiparametric efficiency bound (with respect to the main dataset $\cD$), which requires the representation to be of the following form:
\begin{align*}
\frac{1}{\sqrt{n}} \sum_{i = 1}^{n} \left( \frac{T_i}{\pi_i^\ast} \varepsilon_i (1) + r^\ast_i - \tau^\ast \right) + o_{\mathbb{P}} (1).
\end{align*}
We conjecture that it should be the case under the additional assumption that $\pi^\ast$ can be consistently estimated by $\tilde{\pi}$ when only the OR is assumed to be sparse.
\end{remark}



\subsection{Theoretical properties under the sparse PS model} \label{sec:prop}

In this section, we consider ``the other way around'', namely when the PS model satisfies a sparse GLM. Specifically, we impose the following assumption on the estimated PS coefficient $\hat{\gamma}$ computed from the training sample $\cD_{\tr}$:


\begin{condition}\label{cd:prop}
Condition~\ref{cond:nuis} (ii) holds true and there exists a constant $m_\phi$ such that for any $t$, $\phi'(t), \phi''(t) \le m_\phi$. Moreover, $\gamma^*$ has sparsity $s_\pi$ and with probability converging to $1$, the estimated $\hat{\gamma}$ satisfies that
    \[
	\|\hat{\gamma} - \gamma^*\|_1 = O \left(s_\pi \sqrt{\frac{\log p}{n}}\right) \quad\&\quad \|\hat{\gamma} - \gamma^*\|_2 = O \left(\sqrt{s_\pi \frac{\log p}{n}}\right).
	\]
\end{condition}
Once we further have that $s_\pi = o(\sqrt{n} / \log p)$ (which we will assume in the rest of this section), we have that with probability converging to $1$,
\[
\max_i |X_i^\top (\hat{\gamma} - \gamma^*)| = \max_i \|X_i\|_\infty \|\hat{\gamma} - \gamma^*\|_1 = o_\pr (1).
\]
This, together with Condition~\ref{cd:overlap}, implies that with probability converging $1$, by choosing $M_\gamma \asymp 1$ large enough, $|X_i^\top \hat{\gamma}|, |X_{\aux, i}^\top \hat{\gamma}| \le M_\gamma$, i.e., that with probability converging to $1$,  $\hat{\pi}_i \equiv \phi(X_i^\top \hat{\gamma})$ and $\hat{\pi}_{\aux, i} \equiv \phi(X_{\aux, i}^\top \hat{\gamma})$. This is helpful to derive the $\sqrt{n}$-consistency of $\hat{\tau}_\est$. 

We also impose the following ``lower bound'' constraint on $\phi'$:
%

\begin{condition}\label{cd:dphi}
	For all $u > 0$, there exists a constant $c_{\phi, u}$ depending on $u$ such that 
	\[
	\min_{|w| \le u} |\phi'(w)| \ge c_{\phi, u}.
	\]
\end{condition}

Condition~\ref{cd:dphi} is satisfied by e.g. logistic and probit regression models. With these additional constraints on the sparsity and link functions, the calibrated $\tilde{\gamma}$ satisfies that
\begin{lemma}\label{lem:gammaest}
	Suppose Conditions~\ref{cd:ignorability}--\ref{cd:overlap},~\ref{cd:all}--\ref{cd:est} and Conditions~\ref{cd:prop}--\ref{cd:dphi} hold; suppose moreover that $s_\pi = o(\sqrt{n} / \log p)$. By choosing $\eta_r, \eta_{\pi_1} \asymp \sqrt{\frac{\log p}{n}}, \eta_{\pi_2} \asymp \sqrt{\frac{\log p \vee n}{n}}, M_r, M_\pi, M_\gamma \asymp 1$ sufficiently large, we have that with probability converging to $1$, 
	\[
	\|\tilde{\gamma} - \hat{\gamma}\|_1 = O\left( s_\pi \sqrt{\frac{\log p}{n}}\right) \quad \& \quad \frac{1}{n} \sum_{i=1}^n (\tilde{X}_i^\top (\tilde\gamma - \hat{\gamma}))^2 = O\left(s_\pi \frac{\sqrt{\log p \log p \vee n}}{n}\right).
	\]
\end{lemma}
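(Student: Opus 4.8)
The plan rests on the fact that $\tilde\gamma$ is the $\ell_1$-projection of $\hat\gamma$ onto the feasible set of~\eqref{eq:program}, and that in the present (PS-sparse) regime the true coefficient $\tilde\gamma^\ast \coloneqq (\gamma^{\ast\top}, 0, \dots, 0)^\top$ (no zero-padding if $p \ge n$) lies in that set with probability tending to one by Lemma~\ref{lem:program}, since $\pi^\ast(X_i) = \phi(X_i^\top\gamma^\ast) = \phi(\tilde X_i^\top\tilde\gamma^\ast)$ under Condition~\ref{cd:prop}. I argue on the intersection of the probability-$\to 1$ events of Lemma~\ref{lem:program}, Condition~\ref{cd:prop} and the two concentration statements used below; off this event, or when~\eqref{eq:program} is infeasible and $\tilde\gamma$ is reset to $\hat\gamma$, both bounds are immediate. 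For the first bound, optimality of $\tilde\gamma$ and feasibility of $\tilde\gamma^\ast$ give $\|\tilde\gamma - \hat\gamma\|_1 \le \|\tilde\gamma^\ast - \hat\gamma\|_1 = \|\gamma^\ast - \hat\gamma\|_1 = O(s_\pi\sqrt{\log p/n})$ by Condition~\ref{cd:prop}. Writing $\delta \coloneqq \tilde\gamma - \tilde\gamma^\ast$, the triangle inequality yields $\|\delta\|_1 = O(s_\pi\sqrt{\log p / n})$, hence (using $s_\pi = o(\sqrt n/\log p)$) both $\|\delta\|_1 = o(1)$ and $\max_i |\tilde X_i^\top\delta| \le (1 \vee m_Z)\|\delta\|_1 = o(1)$. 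For the second bound, because $(a+b)^2 \le 2a^2 + 2b^2$, $\tilde X_i^\top(\tilde\gamma^\ast - \hat\gamma) = X_i^\top(\gamma^\ast - \hat\gamma)$, and $n^{-1}\|\bX(\gamma^\ast - \hat\gamma)\|_2^2 = O(\|\gamma^\ast - \hat\gamma\|_2^2) = O(s_\pi\log p/n)$ by standard sub-Gaussian design concentration (Condition~\ref{cd:subg}) --- a rate dominated by $s_\pi\sqrt{\log p\,(\log p\vee n)}/n$ --- it suffices to bound $n^{-1}\sum_{i=1}^n (\tilde X_i^\top\delta)^2$.

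Next I extract a curvature inequality restricted to the treated units. Write $\pi_i \coloneqq \phi(\tilde X_i^\top\tilde\gamma)$, so $\pi_i^\ast = \phi(\tilde X_i^\top\tilde\gamma^\ast) = \pi^\ast(X_i)$. Evaluating constraint~\eqref{eq:constprop} at $\tilde\gamma$ and at $\tilde\gamma^\ast$ (both feasible) and subtracting gives $\max_j |n^{-1}\sum_i (T_i/\pi_i - T_i/\pi_i^\ast)\tilde X_{i,j}| \le 2\eta_{\pi_2}\max_j\|\tilde\bX_j\|_2/\sqrt n = O(\sqrt{(\log p\vee n)/n})$, using that the entries of $\tilde\bX$ are bounded (so $\max_j\|\tilde\bX_j\|_2 = O(\sqrt n)$). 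With $v_i \coloneqq \tilde X_i^\top\delta = \sum_j \tilde X_{i,j}\delta_j$ and Hölder's inequality,
\[
\Big| \frac{1}{n}\sum_{i=1}^n \Big( \frac{T_i}{\pi_i} - \frac{T_i}{\pi_i^\ast} \Big) v_i \Big| \;\le\; \|\delta\|_1 \cdot O\big( \sqrt{(\log p\vee n)/n} \big) \;=\; O\big( s_\pi \sqrt{\log p\,(\log p\vee n)}/n \big).
\]
On the other hand, for a treated unit a mean-value expansion of $t \mapsto 1/\phi(t)$ between $\tilde X_i^\top\tilde\gamma^\ast$ and $\tilde X_i^\top\tilde\gamma$ gives $T_i/\pi_i - T_i/\pi_i^\ast = -\phi'(\xi_i)\phi(\xi_i)^{-2} v_i$, hence $(T_i/\pi_i - T_i/\pi_i^\ast)v_i = -\phi'(\xi_i)\phi(\xi_i)^{-2}v_i^2 \le -\phi'(\xi_i)v_i^2$ since $\phi \in (0,1)$. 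The intermediate point satisfies $|\xi_i| \le |X_i^\top\gamma^\ast| + |v_i|$; Condition~\ref{cd:overlap} and the shape of $\phi$ bound $|X_i^\top\gamma^\ast| = |\phi^{-1}(\pi^\ast(X_i))|$ by an absolute constant, while $\max_i|v_i| = o(1)$ by the previous paragraph, so $\max_i|\xi_i| = O(1)$ and Condition~\ref{cd:dphi} gives $\phi'(\xi_i) \ge c'' > 0$ uniformly in $i$. Summing over all $i$ (untreated units contribute $0$ to both sides) and inserting the display yields $n^{-1}\sum_{i=1}^n T_i (\tilde X_i^\top\delta)^2 = O\big( s_\pi\sqrt{\log p\,(\log p\vee n)}/n \big)$.

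The last and most delicate step passes from this treated quadratic form to the one over all $n$ units. I decompose $n^{-1}\sum_i T_i v_i^2 = n^{-1}\sum_i \pi^\ast(X_i) v_i^2 + n^{-1}\sum_i (T_i - \pi^\ast(X_i)) v_i^2$. Expanding $v_i^2 = \sum_{j,k}\tilde X_{i,j}\tilde X_{i,k}\delta_j\delta_k$, the second term is at most $\|\delta\|_1^2 \max_{j,k}|n^{-1}\sum_i (T_i - \pi^\ast(X_i))\tilde X_{i,j}\tilde X_{i,k}|$; conditionally on $\tilde\bX$ the summands are independent, bounded and mean zero, so Hoeffding's inequality and a union bound over the at most $(n\vee p)^2$ pairs make this $O(\|\delta\|_1^2 \sqrt{\log(n\vee p)/n}) = o\big( s_\pi\sqrt{\log p\,(\log p\vee n)}/n \big)$, again using $s_\pi = o(\sqrt n/\log p)$. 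For the first term, Condition~\ref{cd:overlap} gives $n^{-1}\sum_i \pi^\ast(X_i)v_i^2 \ge c_\pi\, n^{-1}\sum_i v_i^2$. Rearranging and combining with the treated bound of the previous paragraph gives $n^{-1}\sum_{i=1}^n (\tilde X_i^\top\delta)^2 = O\big( s_\pi\sqrt{\log p\,(\log p\vee n)}/n \big)$, which with the reduction of the first paragraph completes the proof. I expect the main obstacle to be the curvature step: pinning down the sign and the uniform lower bound $\phi'(\xi_i)\ge c''$ forces the intermediate points $\xi_i$ to stay in a fixed compact set, which is precisely what overlap (Condition~\ref{cd:overlap}) and the $\ell_1$-consistency of $\hat\gamma$ --- hence of $\tilde\gamma$, via the first bound --- provide; everything else reduces to Hölder's inequality against the program constraints and routine concentration.
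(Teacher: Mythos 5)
Your proposal is correct and follows essentially the same route as the paper's proof: feasibility of $\tilde\gamma^\ast$ via Lemma~\ref{lem:program} plus Condition~\ref{cd:prop} for the $\ell_1$ bound, then the constraint~\eqref{eq:constprop} combined with H\"older, a mean-value expansion of $1/\phi$, overlap and Condition~\ref{cd:dphi} to control the treated quadratic form, and finally an entrywise Hoeffding bound times $\|\delta\|_1^2$ to pass to the unweighted quadratic form. The only cosmetic difference is that you center $T_i$ at $\pi_i^\ast$ and use $\pi_i^\ast\ge c_\pi$, whereas the paper centers $T_i/\pi_i^\ast$ at $1$ and compares against $\tilde{\bX}^\top\tilde{\bX}/n$ — these are interchangeable.
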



Armed with this lemma, and recall that $\hat{\tau}_\dipw$ is $\sqrt{n}$-consistent (see Appendix \ref{app:prop} for an elaboration of the analysis), 
it remains to analyze the difference between $\hat{\tau}_\est$ and $\hat{\tau}_\dipw$. We now apply the following decomposition
\begin{equation*}
    \begin{split}
	\hat{\tau}_\est  - \hat{\tau}_\dipw 
	= & \ \frac{1}{n} \sum_{i=1}^n \left(\frac{T_i (Y_i - \r_i - \hmu_i)}{\tpi_i} - \frac{T_i (Y_i - \r_i - \hmu_i)}{\hpi_i} \right) \\
	= & \ \frac{1}{n} \sum_{i=1}^n \left(\frac{T_i(Y_i - \r_i)}{\tpi_i \hpi_i}  - \frac{\hmu_i}{\hpi_i}\right) (\hpi_i - \tpi_i) + \frac{1}{n} \sum_{i=1}^n (T_i - \tpi_i) \frac{\hmu_i}{\hpi_i \tpi_i} (\hpi_i - \tpi_i).
    \end{split} 
\end{equation*}

Now by Taylor expansion, and using that with probability converging to $1$, $\hat{\pi}_i \equiv \phi(X_i^\top \hat{\gamma})$, we are able to bound the above display as follows:
\begin{equation}
\label{dc decomp}
    \begin{split}
	\hat{\tau}_\est - \hat{\tau}_\dipw
	\lesssim & \ \left|\frac{1}{n} \sum_{i=1}^n \left(\frac{T_i(Y_i - \r_i)}{\tpi_i \hpi_i} - \frac{\hmu_i}{\hpi_i}\right) \phi'(X_i^\top \hat{\gamma}) \tilde{X}_i^\top(\tilde{\gamma} - \hat{\gamma})\right| \\
 & + \left|\frac{1}{n} \sum_{i=1}^n \left(\frac{T_i}{\tpi_i} - 1\right) \frac{\phi'(X_i^\top \hat{\gamma}) \hmu_i}{\hpi_i}  \tilde{X}_i^\top (\tilde{\gamma} - \hat{\gamma})\right| + \frac{1}{n} \sum_{i=1}^n (X_i^\top(\tilde{\gamma} - \gamma^*))^2.
    \end{split} 
\end{equation}

Using an analogous analysis as the bias term in~\citet{wang2024debiased} and that $\|\hat{\gamma} - \tilde{\gamma}\|_1 \lesssim s_\pi \sqrt{\frac{\log p}{n}}$ from Lemma~\ref{lem:gammaest}, we are able to bound the first term to be of order $s_\pi \frac{\sqrt{\log p \log p \vee n}}{n}$. The third term can be controlled using Lemma~\ref{lem:gammaest} as well. Therefore the only missing piece is to control the second term of \eqref{dc decomp}. To this end, we utilize the following identity $\frac{T_i}{\tpi_i} - 1 = \frac{T_i}{\pi_i^*} - 1 + \frac{T_i}{\tpi_i} - \frac{T_i}{\pi_i^*}$ to bound the second term as below:
\begin{align*}
    & \ \left|\frac{1}{n} \sum_{i=1}^n \left(\frac{T_i}{\pi_i^*} - 1\right) \frac{\phi'(X_i^\top \hat{\gamma}) \hmu_i}{\hpi_i}  \tilde{X}_i^\top (\tilde{\gamma} - \hat{\gamma})\right| + \left|\frac{1}{n} \sum_{i=1}^n \left(\frac{T_i}{\tpi_i} - \frac{T_i}{\pi_i^*}\right) \frac{\phi'(X_i^\top \hat{\gamma}) \hmu_i}{\hpi_i}  \tilde{X}_i^\top (\tilde{\gamma} - \hat{\gamma})\right|\\
	\lesssim & \ \left\|\frac{1}{n} \sum_{i=1}^n \left(\frac{T_i}{\pi_i^*} - 1\right) \frac{\phi'(X_i^\top \hat{\gamma})}{\hpi_i} \hmu_i \tilde{X}_i \right\|_\infty \|\tilde{\gamma} - \hat{\gamma}\|_1 + \frac{1}{n} \sum_{i=1}^n (X_i^\top(\tilde{\gamma} - \gamma^*))^2
\end{align*}
where the last inequality follows from \Holder{}'s inequality and Cauchy-Schwarz inequality; for a detailed derivation, we refer readers to Appendix \ref{app:prop}. Then observing that $\{T_i / \pi_i^* - 1, i = 1, \cdots, n\}$ are mean-zero sub-Gaussian random variables and are conditionally uncorrelated with $\tilde{X}_i$'s, the first term of the above display can be shown to be of order $s_\pi \frac{\sqrt{\log p \log p \vee n}}{\sqrt{n}}$. Based on the above arguments, we obtain Theorem~\ref{thm:prop} below. 
%

\begin{theorem}\label{thm:prop}
	Suppose 
	Conditions~\ref{cd:ignorability}--\ref{cd:overlap},~\ref{cd:all}--\ref{cd:est} and Conditions~\ref{cd:prop}--\ref{cd:dphi} hold
	with $s_\pi = o(\sqrt{n} / \sqrt{\log p \log p \vee n})$, then by choosing we the tuning parameters in the same way as in Theorem~\ref{thm:reg}, we have the following representation
	\[
	\sqrt{n} (\htau_{\est} - \bar{\tau}^\ast) = \frac{1}{\sqrt{n}} \sum_{i = 1}^n \left(\frac{T_i \varepsilon_i(1)}{\pi_i^*} + \frac{(T_i - \pi_i^*) (r_i^* - \tilde{r}_i)}{\pi_i^*}\right) + o_\pr(1).
	\]
\end{theorem}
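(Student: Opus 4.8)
The plan is to decompose $\htau_{\est} - \bar\tau^\ast = (\htau_\est - \htau_\dipw) + (\htau_\dipw - \bar\tau^\ast)$ and treat the two summands separately. For the second summand, the singly-(OR-)calibrated estimator $\htau_\dipw$ was already argued in Section~\ref{sec:dipw}, via the proof of \citet[Theorem~2]{wang2020debiased} and the moment identity~\eqref{moment_ps}, to be $\sqrt n$-consistent for $\bar\tau^\ast$ whenever $s_\pi \ll \sqrt n / \log p$; under Conditions~\ref{cd:prop}--\ref{cd:dphi} with $s_\pi = o(\sqrt n/\log p)$ the same argument yields the asymptotic linear representation
\[
\sqrt n(\htau_\dipw - \bar\tau^\ast) = \frac{1}{\sqrt n}\sum_{i=1}^n\left(\frac{T_i\varepsilon_i(1)}{\pi_i^\ast} + \frac{(T_i - \pi_i^\ast)(r_i^\ast - \tilde r_i)}{\pi_i^\ast}\right) + o_\pr(1),
\]
where replacing $\hpi_i$ by $\pi_i^\ast$ is justified because, by Condition~\ref{cd:prop}, $\hpi_i\equiv\phi(X_i^\top\hat\gamma)$ with probability tending to one and $\frac1n\sum_i(\hpi_i - \pi_i^\ast)^2 = O(s_\pi\log p/n)$, while for the bias-type term it additionally uses the OR-balancing constraint~\eqref{eq:dipwprogram}. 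It therefore suffices to show $\htau_\est - \htau_\dipw = o_\pr(n^{-1/2})$, i.e., that the extra PS calibration is harmless once the PS is itself a sufficiently sparse GLM --- the $\tpi$-analogue of Lemma~\ref{lem:mu}.

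Since the $+\tilde r_i$ terms cancel, $\htau_\est - \htau_\dipw$ involves only $\hpi_i^{-1} - \tpi_i^{-1}$, and rearranging gives the two summations displayed in~\eqref{dc decomp}. I then Taylor-expand $\hpi_i - \tpi_i = \phi(\tilde X_i^\top\hat\gamma) - \phi(\tilde X_i^\top\tilde\gamma)$ (reading $\hat\gamma$ as zero-padded to length $p\vee n$) to first order, producing a term linear in $\tilde X_i^\top(\tilde\gamma - \hat\gamma)$ with weight $\phi'(X_i^\top\hat\gamma)$ and a quadratic remainder of order $\frac1n\sum_i(X_i^\top(\tilde\gamma - \gamma^\ast))^2$, exactly as in~\eqref{dc decomp}. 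The quadratic remainder is dispatched by Lemma~\ref{lem:gammaest} and Condition~\ref{cd:prop}, which give $\frac1n\sum_i(\tilde X_i^\top(\tilde\gamma - \hat\gamma))^2 = O(s_\pi\sqrt{\log p\,(\log p\vee n)}/n)$ and $\frac1n\sum_i(X_i^\top(\hat\gamma - \gamma^\ast))^2 = O(s_\pi\log p/n)$. The first linear term in~\eqref{dc decomp} is handled by \Holder's inequality: its coefficient vector has $\ell_\infty$-norm of order $\sqrt{(\log p\vee n)/n}$ by the same reasoning used for the bias term in the proof of \citet[Theorem~2]{wang2020debiased} --- split $Y_i - \r_i$ into its conditionally mean-zero noise $\varepsilon_i(1)$, which concentrates, and its systematic part, which is controlled by the OR-calibration constraint~\eqref{eq:const1} --- while $\|\tilde\gamma - \hat\gamma\|_1 = O(s_\pi\sqrt{\log p/n})$ by Lemma~\ref{lem:gammaest}; together these give order $s_\pi\sqrt{\log p\,(\log p\vee n)}/n$.

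The crux is the second linear term, $\frac1n\sum_i(\tfrac{T_i}{\tpi_i} - 1)\tfrac{\phi'(X_i^\top\hat\gamma)\hmu_i}{\hpi_i}\tilde X_i^\top(\tilde\gamma - \hat\gamma)$: here $\hmu_i$, and hence $r_i^\ast - \tilde r_i$, need not be small when the OR model is grossly misspecified, so the trivial $O(1)$ bound on $\hmu_i$ times $\|\tilde\gamma - \hat\gamma\|_1$ is too lossy. I split $\tfrac{T_i}{\tpi_i} - 1 = (\tfrac{T_i}{\pi_i^\ast} - 1) + (\tfrac{T_i}{\tpi_i} - \tfrac{T_i}{\pi_i^\ast})$. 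For the first piece I apply \Holder and bound the $\ell_\infty$-norm of $\frac1n\sum_i(\tfrac{T_i}{\pi_i^\ast} - 1)\tfrac{\phi'(X_i^\top\hat\gamma)}{\hpi_i}\hmu_i\tilde X_i$ using that $\{T_i/\pi_i^\ast - 1\}$ are i.i.d., mean zero, bounded (hence sub-Gaussian), and --- conditionally on $\{\bX, \cD_\tr, \cD_\aux\}$ --- independent of the weights $\tfrac{\phi'(X_i^\top\hat\gamma)}{\hpi_i}\hmu_i\tilde X_{i,j}$, which are uniformly bounded by the trimming, by~\eqref{eq:dipwinfinity} ($|\hmu_i|\le M_r$), and by $\|Z\|_\infty\le m_Z$; a Bernstein bound with a union bound over $j\le p\vee n$ then gives $\ell_\infty = O_\pr(\sqrt{(\log p\vee n)/n})$, and multiplying by $\|\tilde\gamma - \hat\gamma\|_1$ yields order $s_\pi\sqrt{\log p\,(\log p\vee n)}/n$. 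The second piece is bounded, after another Taylor expansion and Cauchy--Schwarz, by a constant times $\|\tilde\gamma - \hat\gamma\|_1$ times $\sqrt{\frac1n\sum_i(X_i^\top(\tilde\gamma - \gamma^\ast))^2}$, i.e., by quantities already controlled. Summing the three contributions, each of order at most $s_\pi\sqrt{\log p\,(\log p\vee n)}/n$, and invoking $s_\pi = o(\sqrt n/\sqrt{\log p\,(\log p\vee n)})$ gives $\htau_\est - \htau_\dipw = o_\pr(n^{-1/2})$, which with the representation of $\htau_\dipw$ finishes the proof.

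The main obstacle is exactly this second linear term: one cannot afford the uniform bound on $r_i^\ast - \tilde r_i$ and must instead peel off the conditionally centered factor $T_i/\pi_i^\ast - 1$ --- which is possible precisely because $\tpi$ was constructed from $\bX$ and the auxiliary/training samples only, never from the main-sample treatments or outcomes --- and then control a high-dimensional, conditionally centered empirical process indexed by a dictionary of size $p\vee n$. Fixing the right conditioning $\sigma$-field and checking that every weight appearing there is uniformly bounded is where the real work sits; the remaining terms reduce, via \Holder/Cauchy--Schwarz and the $\ell_1$- and in-sample $L^2$-rates from Lemma~\ref{lem:gammaest}, to routine bookkeeping.
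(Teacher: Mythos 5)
Your proposal is correct and follows essentially the same route as the paper: decompose into $(\htau_\est-\htau_\dipw)+(\htau_\dipw-\bar\tau^\ast)$, invoke the DIPW analysis of \citet[Theorem~2]{wang2020debiased} for the second piece, and control the first via the Taylor expansion in \eqref{dc decomp}, Lemma~\ref{lem:gammaest}, and the split $T_i/\tpi_i-1=(T_i/\pi_i^\ast-1)+(T_i/\tpi_i-T_i/\pi_i^\ast)$ with a conditional concentration bound on the $\hmu$-weighted empirical process (the paper uses Hoeffding where you use Bernstein, an immaterial difference). You also correctly identify the $\hmu_i$-term as the crux and the role of the conditional independence of $\{T_i/\pi_i^\ast-1\}$ from the weights, which is exactly where the paper's final lemma in Appendix~\ref{app:prop} does its work.
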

We now write 
\[
\sigma_\mu^2 \coloneqq \frac{1}{n} \sum_{i=1}^n \frac{(1 - \pi_i^*)(r_i^* - \tilde{r}_i)^2}{\pi_i^*}, \quad \bar{\sigma}_\pi^2 \coloneqq \frac{1}{n} \sum_{i=1}^n \frac{\var(\varepsilon_i(1)^2 \mid X_i)}{\pi_i^*}.
\]
Informally, Theorem~\ref{thm:prop} says that with sufficiently large $n$, with high probability, we can obtain by conditioning on $\{\bX, \cD_{\aux}, \cD_\tr\}$ that
\[
\sqrt{n} (\htau_{\est} - \bar{\tau}^\ast) \mid \{\bX, \cD_{\aux}, \cD_\tr\} \approx \calN (0, \bar{\sigma}_\pi^2 + \sigma_\mu^2).
\]

As also discussed in Theorem~\ref{thm:reg}, under some standard regularity conditions, one can expect $\bar{\sigma}_\pi$ to be $O(1)$. For $\sigma_\mu^2$, using the overlap condition as in Condition~\ref{cd:overlap} and recall the $\ell_\infty$ constraint~\eqref{eq:dipwinfinity}, one can expect $\sigma^2_\mu$ to be of order $O(1)$. This proves the $\sqrt{n}$-consistency of our estimator under a sparse PS model. This asymptotic distribution is similar to the one in~\citet{wang2024debiased}.

\begin{remark}
\label{rem:var prop}
Similar to the comment in Remark \ref{rem:var reg}, one missing piece of Theorem \ref{thm:prop} is if the asymptotic linear representation can achieve the semiparametric efficiency bound (with respect to the main dataset $\cD$). We again conjecture that it should be the case under the additional assumption that $r^\ast$ can be consistently estimated by $\tilde{r}$ when only the PS is assumed to be sparse.
\end{remark}

To conclude, Theorem \ref{thm:reg} and \ref{thm:prop}, taken together, address \ref{question} when $p > n$ as the sparsity constraint $s_\pi \ll \sqrt{n} / \sqrt{\log p \log p \vee n}$ reduces to $s_\pi \ll \sqrt{n} / \log p$. When $p \leq n$, the DCal estimator achieves $\sqrt{n}$-consistency under the \eqref{minimal sparsity} condition up to a minor $\sqrt{\log n / \log p}$ factor, which is a constant if we take $p = n^{\delta}$ for some fixed $\delta \in (0, 1)$. We conjecture that this extra log factor can be removed by calibrating the PS in a similar fashion to the OR calibration program \eqref{dipw_obj}, which we leave to future work.

We now briefly discuss the necessity of \eqref{minimal sparsity} for $\sqrt{n}$-consistency of the ATE. In terms of the matching minimax lower bounds, to the best of our knowledge, \citet{bradic2019minimax} was the first to show that \eqref{minimal sparsity} is necessary for $\sqrt{n}$-consistency for continuous $T$ when both the PS and OR models are linear. As mentioned in the Introduction, they also constructed two separate $\sqrt{n}$-consistent estimators when the PS or the OR is sufficiently sparse. At referees' request during the revision process, Appendix I of the final published version of \citet{wang2024debiased} includes a matching lower bound proof when $T$ is binary and the PS model follows a special GLM but the OR model is still linear, by adapting the proof strategy of \citet{cai2023statistical}.

\begin{remark}
The current analysis still requires that the observed data $O = (X, T, Y)$ be light-tailed. It is of great interest to investigate if the proposed approach can be extended into settings in which (part of) $O$ has heavier tails \citep{kuchibhotla2022moving} or has even diverging higher-order moments (e.g. transformed covariates by B-splines or Cohen-Daubechies-Vial wavelets \citep{belloni2015some, liu2017semiparametric, mukherjee2018optimal, liu2021adaptive})\footnote{Based on personal communications \citep{mukherjee2023sparse}, R. Mukherjee and collaborators have established nontrivial (and possibly tight) lower bounds for certain doubly-robust functionals in this context in an unpublished technical report.}.
\end{remark}

\begin{remark}
\label{rem:comp}
Last but not least, we briefly comment on the computational issue of the DCal approach at large. 
Depending on the link functions, the resulting optimization program for DCal can be non-convex. \citet{smucler2019unifying} discussed extensively the potential drawback of solving non-convex programs, such as the lack of theoretical convergence guarantees. Deriving statistical properties under computational constraint \citep{zadik2019computational} is a very important and potentially difficult problem that does not yet garner enough attention in the causal inference literature. But in order not to lead readers astray from the main message of the paper, we leave this problem to future work. Next in Section \ref{sec:sim}, for ATE, we propose a computationally feasible estimator that is motivated by the DCal methodology. Even though we do not have a complete theoretical proof, it nonetheless exhibits competitive performance against several popular state-of-the-art methods in a variety of simulation settings.
\end{remark}

\section{Applications and extensions of the Double-Calibration approach}
\label{sec:extensions}

Thus far, the new DCal approach has been delineated using the ATE under the sparse GLM nuisance models. To demonstrate the generality of the DCal approach, we now apply and extend it in several directions.

\subsection{Regression coefficient}
\label{sec:slope}

As mentioned in the Introduction, our DCal strategy can be extended to target parameters other than ATE. In this section, we demonstrate the broad applicability of our approach via another important parameter in statistics and econometrics. Specifically, we consider the following semiparametric partially linear model \citep{robinson1988root}:
\begin{equation}
\label{plm}
Y = T \tau^* + r^*(X) + \varepsilon, \quad\&\quad T = \pi^*(X) + e,
\end{equation}
for which the parameter of interest is $\tau^{\ast}$ and the nuisance parameters, with a slight abuse of notations, are again denoted as $\theta^\ast = (r^*, \pi^*)$. For simplicity of exposition we assume that $\varepsilon$ and $e$ are noises with homoscedastic variances $\sigma_\varepsilon^2, \sigma_e^2$ respectively; moreover, we assume that the noise distributions do not change with the sample size $n$. Since $\tau^\ast$ is the parameter of interest, we also regard it as a fixed constant. 

Given initial estimates $\hat{\tau}, \r$ and $\hpi$ computed from the training dataset $\cD_\tr$, just as in the ATE case, we write $\r_i \coloneqq \r(X_i), \hpi_i \coloneqq \hpi(X_i)$. We propose the following estimator for the regression coefficient $\tau^\ast$: 
\[
\hat{\tau}_\est \coloneqq \hat{\tau} + \frac{1}{n} \sum_{i=1}^n \frac{(T_i - \tpi_i)(Y_i - T_i \hat{\tau} - \r_i - \hat\mu_i)}{\tilde{\sigma}_e^2},
\]
where $\hat{\bm\mu}$ is the solution to the following $\ell_{\infty}$-calibration program:
\begin{align*}
\hat{\bm{\mu}} \coloneqq \calL_{\bm{\mu}, B_{\infty}^{n} (M_{r})} \left( \Vert \cdot \Vert_{2}^{2}; \Delta^{(\aux)}_{1, Y - T_{i} - \hat{\tau} - \hat{r}, \mu}; \phi' (X^{\top} \hat{\gamma}) X; \mathbbm{1}_{p} \eta_{r} \right)
\end{align*}
and $\tilde{\sigma}_e^2$ is an estimated variance using $\tpi$, i.e.,
\[
\tilde{\sigma}_e^2 \coloneqq \frac{1}{n} \sum_{i = 1}^n (T_i - \tpi_i)^2.
\]

\begin{remark}
Here we deliberately choose not to write $\tilde{r}$ in place of $\hat{r} + \hat{\mu}$ as the purpose of $\hat{\mu}$ is to correct the estimation error of $T \hat{\tau} + \hat{r}$ as an estimator of $\pi^\ast \tau^\ast + r^\ast$. This is slightly different from the setting of ATE in Sections \ref{sec:ate} and \ref{sec:atethm}.
\end{remark}

We now turn to the nuisance function $\tpi = \phi (\tilde{X}^{\top} \tilde{\gamma})$, which is constructed by modifying the estimator in~\eqref{ps operator} slightly as follows: 
\begin{align}\label{ps operator reg}
\tilde{\gamma} \coloneqq \calL_{\gamma, \tilde{B}_{\infty}^{\dag p} (M_{\pi})} \left( \Vert \cdot - \hat{\gamma} \Vert_{1}; \Delta_{1, T, \phi (X^{\top} \gamma)}; \mathrm{h}; \mathrm{b} \right)
\end{align}
where
\begin{align}
\tilde{B}_{\infty}^{\dag p} (M_{\pi}) & \coloneqq \left\{ \gamma: \Vert \bm{\pi} \Vert_{\infty} \leq M_{\pi} \right\} \tag{14a} \label{eq:const41}, \\
\mathrm{h} & \coloneqq \left( \psi' (X^{\top} \hat{\beta}) X^{\top}, \hat{\mu}, \tilde{X}^{\top}, \pi, \pi - T \right)^{\top} \tag{14b} \label{eq:const42}, \\
\mathrm{b} & \coloneqq \left( \frac{\eta_{\pi_1}}{\sqrt{n}} \|\psi' (X^{\top} \hat{\beta}) X \|_{n, 2}, \frac{\eta_{\pi_{1}} \Vert \hat{\bm{\mu}} \Vert_{2}}{\sqrt{n}}, \frac{\eta_{\pi_{2}}}{\sqrt{n}} \Vert \tilde{X} \Vert_{n, 2}, \frac{\eta_{\pi_{1}} \Vert \hat{\bm{\pi}} \Vert_{2}}{\sqrt{n}}, - M_{\pi}^{-1} \right)^{\top}.  \tag{14c} \label{eq:const43}
\end{align}
Specifically, we keep the objective function ~\eqref{ps operator}, change the residual to be calibrated from $\Delta_{2, T, \pi} = T / \pi - 1$ to $\Delta_{1, T, \pi} = T - \pi$, and add two extra inequality constraints to $\mathrm{h}$ and $\mathrm{b}$. In particular, the additional constraints are:
\addtocounter{equation}{+1}
\begin{equation}
\label{eq:const4}
\left|\frac{1}{n} \sum_{i = 1}^n (T_i - \pi_i) \pi_i \right| \le \eta_{\pi_1} \cdot \frac{\|{\mbb \pi}\|_2}{\sqrt{n}}, \frac{1}{n} \sum_{i=1}^n (T_i - \pi_i)^2 \ge M_\pi^{-1}.
\end{equation}

To understand the theoretical property of the DCal estimator of the regression coefficient $\tau^\ast$, we need to further modify the assumptions in Section~\ref{sec:atethm}. 
In particular, we adapt Conditions~\ref{cd:all}(i) and~\ref{cd:est} to the following:

\begin{condition} \label{cd:regcoef}
	We have that
	\begin{enumerate}
		\item[(i)] For some constant $m > 0$, almost surely the absolute value of random variables $(\varepsilon, e, r^*(X), \pi^*(X))$ are bounded below by $m$;
		
		\item[(ii)] There exists a constant $\hat{m} > 0$ such that with probability converging to $1$, $|\hat{\tau}| \le \hat{m}$, $\pr(|\psi(X^\top \hat{\beta})| \le \hat{m} \mid \hat{\beta}) = 1$ and that $\pr(|\phi(X^\top \hat{\gamma})| \le \hat{m} \mid \hat{\gamma}) = 1$, where $X$ follows the same interpretation as Condition~\ref{cd:est}.
	\end{enumerate}
\end{condition} 

%

With the above preparation, by mimicking the proofs of Theorems~\ref{thm:reg} and~\ref{thm:prop} (see Appendix \ref{app:coef}), we have the following results on the asymptotic statistical property of $\hat{\tau}_\est$ for estimating the $\tau^\ast$ in the semiparametric partially linear model \eqref{plm}. 

\begin{theorem}\label{thm:coef}
	Under Conditions~\ref{cd:all}(ii) and~\ref{cd:regcoef}, by choosing $\eta_r, \eta_{\pi_1} \asymp \sqrt{\frac{\log p}{n}}, \eta_{\pi_2} \asymp \sqrt{\frac{\log p \vee n}{n}}, M_r, M_\pi \asymp 1$ sufficiently large, we have the following:
	\begin{enumerate}
		\item[(i)] If Condition~\ref{cd:reg} holds, with the additional assumption that $|\hat{\tau} - \tau^*| = O(\sqrt{s_r \log p / n})$ and that $s_r = o(\sqrt{n} / \log p)$, we have
		\[
		\sqrt{n} (\hat{\tau}_\est - \tau^*) = \frac{1}{\sqrt{n}} \sum_{i = 1}^n \frac{(T_i - \tpi_i)\varepsilon_i}{\tilde{\sigma}_e^2} + o_\pr(1).
		\]
		\item[(ii)] If Conditions~\ref{cd:prop} and~\ref{cd:dphi} holds, then we have
		\[
		\sqrt{n} (\hat{\tau}_\est - \tau^*) = \frac{1}{\sqrt{n}} \sum_{i = 1}^n \frac{e_i \varepsilon_i}{\sigma_e^2} + \frac{1}{\sqrt{n}} \sum_{i = 1}^n \frac{e_i((\tau^* - \hat{\tau}) \pi_i^* + r_i^* - \r_i - \hat{\mu}_i)}{\sigma_e^2} + o_\pr(1).
		\]
	\end{enumerate}
\end{theorem}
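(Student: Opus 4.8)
The plan is to re-run the proofs of Theorems~\ref{thm:reg} and~\ref{thm:prop} with the algebra adapted to the additive-residual form of~\eqref{plm}. Substituting $Y_i = T_i\tau^* + r_i^* + \varepsilon_i$ gives
\[
\hat\tau_\est - \tau^* = (\hat\tau-\tau^*) + \frac{\tau^*-\hat\tau}{n\tilde{\sigma}_e^2}\sum_{i=1}^n (T_i-\tpi_i)T_i + \frac{1}{n\tilde{\sigma}_e^2}\sum_{i=1}^n (T_i-\tpi_i)(r_i^*-\r_i-\hat{\mu}_i) + \frac{1}{n\tilde{\sigma}_e^2}\sum_{i=1}^n (T_i-\tpi_i)\varepsilon_i .
\]
Two structural facts are used throughout: the deterministic bound $\tilde{\sigma}_e^2 \ge M_\pi^{-1}$ and the constraint $|\tfrac1n\sum_i (T_i-\tpi_i)\tpi_i| \le \eta_{\pi_1}\|\tpi\|_2/\sqrt n = O_\pr(\sqrt{\log p/n})$, both from~\eqref{eq:const4}; and the analogues of Lemma~\ref{lem:program} for the two modified programs (replacing $T_i/\pi_i-1$ by $T_i-\pi_i$ and~\eqref{eq:const3} by~\eqref{eq:const4} only changes concentration constants), which give, with probability tending to one, feasibility of $\tilde\gamma^*$ with $\pi_i^* = \phi(\tilde X_i^\top\tilde\gamma^*)$ and of the oracle $\boldsymbol{\mu}^\diamond$ with $\mu_i^\diamond = \pi_i^*(\tau^*-\hat\tau) + r_i^*-\hat r_i$ -- recall that $\hat{\mu}$ here corrects the error of $T\hat\tau+\hat r$ as an estimate of $\pi^*\tau^* + r^*$ -- so that, mimicking Lemma~\ref{lem:mu}, $\|\hat{\boldsymbol{\mu}}\|_2 \le \|\boldsymbol{\mu}^\diamond\|_2 \lesssim \sqrt n|\hat\tau-\tau^*| + \|r^*-\hat r\|_2$.

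For part~(i), the last term of the display is the claimed variance term. Since $\tfrac1n\sum_i(T_i-\tpi_i)T_i = \tilde{\sigma}_e^2 + O_\pr(\sqrt{\log p/n})$, the first two terms of the display collapse to $(\tau^*-\hat\tau)\cdot O_\pr(\sqrt{\log p/n})$, which is $o_\pr(n^{-1/2})$ under the assumed rate $|\hat\tau-\tau^*| = O(\sqrt{s_r\log p/n})$ and $s_r = o(\sqrt n/\log p)$; the same rate together with Condition~\ref{cd:reg} gives $\|\hat{\boldsymbol{\mu}}\|_2 = O(\sqrt{s_r\log p}) = o_\pr(n^{1/4})$. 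The third term is at most $M_\pi|\tfrac1n\sum_i(T_i-\tpi_i)(r_i^*-\r_i-\hat{\mu}_i)|$; Taylor-expanding $r_i^*-\r_i$ in $X_i^\top(\hat\beta-\beta^*)$ and using \Holder{}'s inequality with the modified constraints~\eqref{eq:const1}--\eqref{eq:const2} and Condition~\ref{cd:subg} bounds it by $O_\pr\big(\|\hat\beta-\beta^*\|_1\eta_{\pi_1} + \|\hat{\boldsymbol{\mu}}\|_2\eta_{\pi_1}/\sqrt n + \|\hat\beta-\beta^*\|_2^2\big) = O_\pr(s_r\log p/n)$, again $o_\pr(n^{-1/2})$. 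This yields~(i).

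For part~(ii), under Condition~\ref{cd:prop} with $s_\pi = o(\sqrt n/\log p)$ one has $\max_i|X_i^\top(\hat\gamma-\gamma^*)| = o_\pr(1)$, so $\hpi_i = \phi(X_i^\top\hat\gamma)$ with probability tending to one and $\max_i|\pi_i^*-\hpi_i| = O_\pr(s_\pi\sqrt{\log p/n})$; I also establish the analogue of Lemma~\ref{lem:gammaest} for the modified $\tilde\gamma$-program -- since $\pi^*$ is exactly a GLM in $X$ here, the augmented coordinates of $\tilde\gamma^*$ vanish and the proof transfers -- giving $\|\tilde\gamma-\hat\gamma\|_1 = O(s_\pi\sqrt{\log p/n})$ and $\tfrac1n\sum_i(\tilde X_i^\top(\tilde\gamma-\hat\gamma))^2 = O(s_\pi\sqrt{\log p\log p\vee n}/n)$. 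I then analyse the singly-PS-calibrated estimator obtained by replacing $\tpi,\tilde{\sigma}_e^2$ with $\hpi,\hat{\sigma}_e^2 \coloneqq \tfrac1n\sum_i(T_i-\hpi_i)^2$: as $\hpi$ does not involve the main-sample $(\mathbf T,\mathbf Y)$ and $\hat{\sigma}_e^2 = \sigma_e^2 + o_\pr(1)$, the argument of~\citet[Theorem~2]{wang2020debiased} applies -- with the $\hat{\mu}$-calibration (which forces $\max_j|\tfrac1n\sum_i\phi'(X_i^\top\hat\gamma)X_{i,j}(\pi_i^*(\tau^*-\hat\tau) + r_i^*-\r_i-\hat{\mu}_i)| = O_\pr(\sqrt{\log p/n})$) absorbing the cross-bias between the PS- and OR-estimation errors exactly as the moment equation~\eqref{moment_or} does in Theorem~\ref{thm:prop} -- and, after writing $r_i^*-\r_i-\hat{\mu}_i = (\pi_i^*(\tau^*-\hat\tau) + r_i^*-\r_i-\hat{\mu}_i) - \pi_i^*(\tau^*-\hat\tau)$ and keeping $\tfrac1n\sum_i e_i^2$ intact so its $O_\pr(n^{-1/2})$ fluctuation cancels against $\hat{\sigma}_e^2$, this produces exactly the stated representation. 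Finally $\hat\tau_\est$ minus this estimator is controlled by the analogue of~\eqref{dc decomp}: Taylor-expanding $\phi$ about $X_i^\top\hat\gamma$ gives a term linear in $\tilde X_i^\top(\tilde\gamma-\hat\gamma)$, bounded via \Holder{}, Cauchy--Schwarz, the constraints~\eqref{eq:const1}--\eqref{eq:constprop},~\eqref{eq:const4} and the $\ell_1$-bound of the Lemma~\ref{lem:gammaest} analogue after peeling off $e_i = T_i-\pi_i^*$ (mean-zero, sub-Gaussian, conditionally uncorrelated with $\tilde X_i$, exactly as in Section~\ref{sec:prop}), plus a quadratic remainder $\lesssim\tfrac1n\sum_i(\tilde X_i^\top(\tilde\gamma-\gamma^*))^2$ controlled by the $\ell_2$-bound of the same lemma; all pieces are $o_\pr(n^{-1/2})$ once $s_\pi = o(\sqrt n/\sqrt{\log p\log p\vee n})$.

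The step I expect to be the main obstacle is the bias analysis of part~(ii) -- precisely the analogue of the delicate argument in Section~\ref{sec:prop}. The calibrated PS $\tpi$ (hence $\tilde{\sigma}_e^2$) is itself a function of the main-sample treatments $\mathbf T$, so one cannot condition on $\mathbf T$ when bounding the $T$-linear bias terms, and the balancing constraints must absorb this dependence; the genuinely cross-bias contributions -- $\tfrac1n\sum_i(\pi_i^*-\tpi_i)e_i$, the PS-error $\times$ OR-error product, and the $\hat\tau$-error $\times$ PS-error product -- must each be driven below $n^{-1/2}$, which is where the quantitative bounds of the Lemma~\ref{lem:gammaest} and Lemma~\ref{lem:mu} analogues and the $\hat{\mu}$/$\tilde\gamma$ balancing constraints are all needed in concert; this is also the point at which the initial $\hat\tau$ must already be consistent (as the usual DML estimate under PS-sparsity is), so that $(\hat\tau-\tau^*)$ times the $O_\pr(\sqrt{\log p})$ bias factors remains $o_\pr(1)$. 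A secondary point is the bookkeeping for the synthetic-covariate augmentation when $p<n$ (pad $\hat\gamma,\gamma^*$ with zeros) and, since the excerpt posits only homoscedastic noises, invoking the maintained moment identities $\E[\varepsilon\mid X,T]=0$, $\E[e\mid X]=0$, $\E[\varepsilon e\mid X]=0$ explicitly.
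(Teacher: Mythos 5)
Your proof is correct and follows essentially the same route as the paper: the same four-term decomposition for part (i) (with the constraint \eqref{eq:const4} handling the $\hat\tau$-error term, the balancing constraints plus Taylor expansion handling the OR-error term, and the oracle $\mu^\diamond_i=\pi_i^*(\tau^*-\hat\tau)+r_i^*-\hat r_i$ giving $\|\hat{\bm\mu}\|_2=O(\sqrt{s_r\log p})$ exactly as in the paper's auxiliary lemma), and the same reference-estimator-plus-difference strategy for part (ii) using the Lemma~\ref{lem:gammaest} analogue. The only cosmetic difference is that in part (ii) you compare against the estimator built from $\hat\pi$ and $\hat\sigma_e^2$ (mirroring the ATE argument of Section~\ref{sec:prop}), whereas the paper compares directly against the estimator built from the true $\pi^*$ and $\bar\sigma_e^2=n^{-1}\sum_i e_i^2$ and then derives its representation by a separate CLT lemma; both reduce to the same cross-bias controls.
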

We first discuss the intuitions of Theorem~\ref{thm:coef} (i). With a slight abuse of notation, we write $\bar{\sigma}_r^2 \coloneqq \frac{\var(\varepsilon)}{\hat{\sigma}_\varepsilon^2}$. Analogous to the discussions below Theorem~\ref{thm:reg}, informally, it says that
\[
\sqrt{n} (\hat{\tau}_\est - \tau^*) \mid \{(\mathbf{T}, \bX), \cD_{\aux}, \cD_\tr\} \overset{\mathsf{d}}{\approx} \calN(0, \bar{\sigma}_r^2),
\]
where the program for $\tilde{\gamma}$ ensures that with probability converging to $1$, $\bar{\sigma}_r^2$ is $O(1)$. For Theorem~\ref{thm:coef} (ii), using analogous discussions to those below Theorem~\ref{thm:prop}, we have that
\[
\sqrt{n} (\hat{\tau}_\est - \tau^*) \mid \{(\mathbf{T}, \bX), \cD_{\aux}, \cD_\tr\} \overset{\mathsf{d}}{\approx} \calN(0, \bar{\sigma}_\pi^2 + \sigma_\mu^2),
\]
where with a slight abuse of notation, we redefine 
\[
\sigma_\mu^2 \coloneqq \frac{1}{n} \sum_{i=1}^n ((\tau^* - \hat{\tau}) \pi_i^* + r_i^* - \r_i - \hat{\mu}_i)^2 \quad\&\quad \bar{\sigma}_\pi^2 \coloneqq \sigma_\varepsilon^2.
\]
Since all the summands in $\sigma_\mu^2$ are bounded with probability converging to $1$, $\sigma_\mu^2$ is with probability converging to $1$ of order $O(1)$, which in turn implies that $\hat{\tau}_\est$ is a $\sqrt{n}$-consistent estimator of $\tau^*$.

\begin{remark}
\label{rem:newey}
As alluded to in the Introduction, \citet{bradic2019minimax} also studied the problem of estimating the regression coefficient in a semiparametric partially linear model with the nuisance parameters approximated by sparse linear models. However, \citet{bradic2019minimax} constructed two separate $\sqrt{n}$-consistent estimators of $\tau^\ast$, one under the assumption $r^\ast$ is sparse, and the other under the assumption $\pi^\ast$ is sparse. In the penultimate paragraph of the main text of \citet{bradic2019minimax}, they listed as an open question to construct one estimator that is agnostic to the identity of the sparse nuisance parameters. Here we demonstrate that our new DCal estimator solves this question at least in the exact sparsity case.

Our result is also related to the vast literature on debiased lasso \citep{zhang2014confidence, van2014asymptotically, javanmard2014confidence, javanmard2018debiasing, cai2017confidence, zhu2018linear, cattaneo2018inference, cattaneo2019two, shah2023double}. We refer readers who are interested in a comparison between our result and the above references to Section 1 of \citet{bradic2019minimax}, which has comprehensively surveyed the conditions under which $\sqrt{n}$-consistent estimators of $\tau^\ast$ were developed in some of these referenced works.
\end{remark}

\subsubsection*{Further explanation on the DCal estimator for the regression coefficient through the lens of DR functionals}

We expect that it may be helpful to explain the DCal estimator for regression coefficient estimation through the lens of influence functions of DR functionals, first characterized in \citet{rotnitzky2021characterization} (also see Section 5 of \citet{chernozhukov2022locally}). The influence function of a DR functional $\tau$ with nuisance parameters $\theta = (a, b)$ has the following generic form:
\begin{align*}
\IF = H (\theta) - \tau, \text{ with } H (\theta) = S_1 a (Z) b (Z) + m_{a} (O, a) + m_{b} (O, b) + S_0,
\end{align*}
where the observable is $O = (Z, W)$, and $a \mapsto m_{a} (O, a)$ and $b \mapsto m_{b} (O, b)$ are two linear maps that satisfy some extra regularity and moment conditions. We refer readers to \citet{rotnitzky2021characterization} and \citet{liu2024assumption} for more details.

To see how this connects with the regression coefficient $\tau$ of $T$ in \eqref{plm}, we first decompose $O = (Z, Y)$ with $Z = (T, X)$. Then an influence function of $\tau$ is:
\begin{align*}
\IF = H (\theta) - \tau, \text{ with } H (\theta) = - a (Z) b (Z) + a (Z) Y + \frac{\partial}{\partial T} b (Z), \text{ and $\dfrac{\partial}{\partial T} b (Z) \equiv \tau$ by \eqref{plm},}
\end{align*}
where $a (z) \coloneqq (t - \pi (x)) \slash \mathbb{E} [(T - \pi (X))^2], b (z) \coloneqq r (x) + t \cdot \tau$. Hence as the ATE, the regression coefficient $\tau$ is also a special case of the DR functionals, as obviously $a (Z) Y$ and $\partial b (Z) / \partial T$ are linear maps of $a$ and $b$ respectively. Similarly, we denote $\theta^\ast = (a^\ast, b^\ast)$ as the true nuisance parameters. Similar to ATE, a natural estimator for $\tau$ is the DML estimator:
\begin{align*}
\hat{\tau}_{\DML} \coloneqq \hat{\tau} + \frac{1}{n} \sum_{i = 1}^{n} \hat{a} (Z_i) \left( Y_i - \hat{b} (Z_i) \right)
\end{align*}
where $\hat{b} (z) \equiv \hat{r} (x) + \hat{\tau} t \equiv \psi (x^\top \hat{\beta}) + \hat{\tau} t$ is simply some variant of the lasso estimate with the coefficients $(\hat{\tau}, \hat{\beta}^\top)^\top$ computed from the training dataset $\cD_\tr$. As for $\hat{a} (z)$, as is done in this paper, one can simply first estimate $\hat{\gamma}$ to obtain $\hat{\pi}$ and then estimate the denominator $\mathbb{E} [(T - \pi^\ast (X))^2]$ by its DML estimator $n^{-1} \sum_{i = 1}^{n} (T_i - \phi (X_i^\top \hat{\gamma}))^2$. To achieve $\sqrt{n}$-rate under $s_r \wedge s_\pi \ll \sqrt{n} / \log p$, one uses the Double-Calibration program to de-bias $\hat{a} (Z)$ and $\hat{b} (Z)$ by, respectively, $\tilde{a} (Z) = (T - \phi (X^\top \tilde{\gamma}))) / n^{-1} \sum_{i = 1}^{n} (T_i - \phi (X_i^\top \tilde{\gamma}))^2$ and $\tilde{b} (Z) = \hat{b} (Z) + \hat{\mu}$. By design of the Double-Calibration program, $\tilde{a}$ and $\tilde{b}$ again satisfy the following moment equations, similar to those in Remark \ref{rem:dipw} and \ref{rem:dc}:
\begin{align*}
\frac{1}{n} \sum_{i = 1}^{n} \left( \frac{\partial}{\partial T_i} \tilde{b}_i - a_i^\ast \tilde{b}_i \right) \text{ and } \frac{1}{n} \sum_{i = 1}^{n} \tilde{a}_i (Y_i - b_i^\ast) \text{ have mean zero}.
\end{align*}

%
%
%
%
%


\subsection{Local Average Treatment Effects (LATE) with Instrumental Variables (IV)}
\label{sec:iv}

Now we turn our attention to the setting allowing endogeneity by leveraging the availability of an Instrumental Variable (IV). Here we have access to $N$ i.i.d. observations $\{O_{i} = (X_{i}, Z_{i}, T_{i}, Y_{i}), i = 1, \cdots, N\}$, where $T \in \{0, 1\}$ is again the binary treatment and $Z \in \{0, 1\}$ is a binary IV. 
We are interested in estimating the so-called Local Average Treatment Effects (LATE) $\chi^* := \E[Y(1) \mid T(1) > T(0)]$~\citep{imbens1994identification}. Under some standard IV-identification conditions which we defer to the Appendix \ref{app:iv}, $\chi^*$ is identifiable in the form of ``2-Stage Least Squares'' (2SLS):
\begin{equation}
\label{late}
\chi^{\ast} = \frac{\E[Y(1) (T(1) - T(0))]}{\E[(T(1) - T(0))]} \coloneqq \frac{\bbE \left[ \frac{Z T Y}{\zeta^{\ast} (X)} - \frac{(1 - Z) T Y}{1 - \zeta^{\ast} (X)} \right]}{\bbE \left[ \frac{Z T}{\zeta^{\ast} (X)} - \frac{(1 - Z) T}{1 - \zeta^{\ast} (X)} \right]},
\end{equation}
where $\zeta^{\ast} (x) \coloneqq \bbE [Z | X = x] \equiv \varphi (x^{\top} \xi^{\ast})$ represents the Instrument Propensity Score (IPS).
It is worth noting that $\chi^{\ast}$ may identify ATE instead of LATE under certain alternative IV identification conditions \citep{wang2018bounded}.

To ease exposition, we consider the following parameter instead, which shares exactly the same structure as LATE:
\[
\tau^* \coloneqq \frac{\E[Y(1) T(1)]}{\E[T(1)]} \equiv \frac{\bbE \left[ \frac{Z T Y}{\zeta^{\ast} (X)} \right]}{\bbE \left[ \frac{Z T}{\zeta^{\ast} (X)} \right]} \eqqcolon \frac{\tau_n^*}{\tau_d^*}.
\]
We next introduce the following notations for nuisance parameters involved in estimating $\tau^{\ast}$:
\begin{align*}
\pi^{\ast} (x) & \coloneqq \bbE [T | X = x, Z = 1] \equiv \phi (x^{\top} \gamma^{\ast}), \\
r^{\ast} (x) & \coloneqq \bbE [Y | X = x, Z = 1, T = 1] \equiv \psi (x^{\top} \beta^{\ast}).
\end{align*}
Then applying again the standard IV-identification conditions, we have
\[
\tau^\ast_n \equiv \bbE\left[ \frac{Z (T Y - \pi^{\ast} (X) r^{\ast} (X))}{\zeta^{\ast}(X)} + \pi^{\ast} (X) r^{\ast} (X) \right] \quad\&\quad \tau^\ast_d \equiv \bbE\left[ \frac{Z (T - \pi^{\ast} (X))}{\zeta^{\ast}(X)} + \pi^{\ast} (X) \right].
\]
This gives rise to the following DML-2SLS estimator of $\tau^{\ast}$ via estimating $\tau_n^{\ast}$ and $\tau_d^{\ast}$ in \eqref{late} through their respective DML estimators \citep{tan2006regression}: $\hat{\tau}_{\DML} \coloneqq \hat{\tau}_{n, \DML} / \hat{\tau}_{d, \DML}$, where
\begin{equation*}
    \hat{\tau}_{n, \DML} \coloneqq \frac{1}{n} \sum_{i = 1}^{n} \frac{Z_{i}}{\hat{\zeta}_{i}} \left( T_{i} Y_{i} - \hat{\pi}_{i} \hat{r}_{i} \right) + \hat{\pi}_{i} \hat{r}_{i}, \quad\&\quad \hat{\tau}_{d, \DML} \coloneqq \frac{1}{n} \sum_{i = 1}^{n} \frac{Z_{i}}{\hat{\zeta}_{i}} \left( T_{i} - \hat{\pi}_{i} \right) + \hat{\pi}_{i}.
\end{equation*}
Following standard arguments in doubly robust estimation~\citep{chernozhukov2018double, liu2024assumption}, it is straightforward to see that $\hat{\tau}_{\DML}$ is $\sqrt{n}$-consistent if $s_{\zeta} \cdot (s_{\pi} + s_{r}) = o (n / \log^{2} p)$. Similar to ATE, we are interested in whether it is possible to construct $\sqrt{n}$-consistent estimators for LATE, under the laxed sparsity condition $s_{\zeta} \wedge (s_{\pi} \vee s_{r}) = o (\sqrt{n} / \log p)$. Indeed, following similar arguments to those in Section \ref{sec:ate}, we consider the following Double-Calibrated estimator for LATE: $\hat{\tau}_{\dc} \coloneqq \hat{\tau}_{n, \dc} / \hat{\tau}_{d, \dc}$, where
\begin{align*}
\hat{\tau}_{n, \dc} & \coloneqq \frac{1}{n} \sum_{i = 1}^{n} \frac{Z_{i}}{\tilde{\zeta}_{i}} \left( T_{i} Y_{i} - \hat{\pi}_{i} \hat{r}_{i} - \tilde{\mu}_{t y, i} \right) + \hat{\pi}_{i} \hat{r}_{i} + \tilde{\mu}_{t y, i}, \\
\hat{\tau}_{d, 1, \dc} & \coloneqq \frac{1}{n} \sum_{i = 1}^{n} \frac{Z_{i}}{\tilde{\zeta}_{i}} \left( T_{i} - \hat{\pi}_{i} - \tilde{\mu}_{t, i} \right) + \hat{\pi}_{i} + \tilde{\mu}_{t, i}.
\end{align*}
Here, for $i = 1, \cdots, n$, $\bm{\tilde{\mu}}_{t y} \coloneqq (\tilde{\mu}_{t y, 1}, \cdots, \tilde{\mu}_{t y, n})^{\top}$ and $\bm{\tilde{\mu}}_{t} \coloneqq (\tilde{\mu}_{t, 1}, \cdots, \tilde{\mu}_{t, n})^{\top}$ are, respectively, the solution to:
\begin{align*}
\tilde{\mbb\mu}_{t y} & \coloneqq \calL_{\bm{\mu}, B_{\infty}^{n} (M_r)} \left( \Vert \cdot \Vert_{2}^{2}; \Delta_{1, Z (T Y - \hat{\pi} \hat{r}) / \hat{\zeta}, \mu_{t y}}^{(\aux)}; \varphi' (X^{\top} \hat{\xi}) X / \hat{\zeta}; \mathbbm{1}_{p}^{\top} \eta_{t y} \right), \\
\tilde{\mbb\mu}_{t} & \coloneqq \calL_{\bm{\mu}, B_{\infty}^{n} (M_{\pi})} \left( \Vert \cdot \Vert_{2}^{2}; \Delta_{1, Z (T - \hat{\pi}) / \hat{\zeta}, \mu_{t}}^{(\aux)}; \varphi' (X^{\top} \hat{\xi}) X / \hat{\zeta}; \mathbbm{1}_{p}^{\top} \eta_{t} \right).
\end{align*}
Similarly, $\tilde{\zeta}_i \coloneqq \varphi (\tilde{\xi}^{\top} \tilde{X}_i)$, with $\tilde{\xi}$ as the solution to:
\begin{align*}
\tilde{\xi} \coloneqq \calL_{\xi, \tilde{B}_{\infty}^{p} (M_{\zeta})} \left( \Vert \cdot - \hat{\xi} \Vert_{1}; \Delta_{2, Z, \varphi (X^{\top} \xi)}; \mathrm{h}; \mathrm{b} \right),
\end{align*}
where
\begin{align*}
\tilde{B}_{\infty}^{p} (M_{\zeta}) & \coloneqq \left\{ \gamma: \Vert Z / \varphi (\tilde{X}^{\top} \xi) \Vert_{\infty} \leq M_{\zeta} \right\}, \\
\mathrm{h} & \coloneqq \left( \phi' (X^{\top} \hat{\gamma}) X^{\top}, \psi' (X^{\top} \hat{\beta}) \hat{\pi} X^{\top}, \phi' (X^{\top} \hat{\gamma}) \hat{r} X^{\top}, \tilde{\mu}_{t}, \tilde{\mu}_{t y}, \tilde{X}^{\top} \right)^{\top}, \\
\text{and } \mathrm{b} & \coloneqq \left( \frac{\eta_{\zeta_1}}{\sqrt{n}} \|\psi' (X^{\top} \hat{\beta}) X \|_{n, 2}, \frac{\eta_{\zeta_1} \Vert \tilde{\bm{\mu}}_{t} \Vert_{2}}{\sqrt{n}}, \frac{\eta_{\zeta_1} \Vert \tilde{\bm{\mu}}_{t y} \Vert_{2}}{\sqrt{n}}, \frac{\eta_{\zeta_2}}{\sqrt{n}} \Vert \tilde{X} \Vert_{n, 2} \right)^{\top}.
\end{align*}

Following exactly the same analysis as Theorems \ref{thm:reg} and \ref{thm:prop}, it can be proven that by choosing $\eta_{t}, \eta_{ty}, \eta_{\zeta_1} \asymp \sqrt{\frac{\log p}{n}}, \eta_{\zeta_2} \asymp \sqrt{\frac{\log p \vee n}{n}}$, and $M_{r}, M_{\pi}, M_{\zeta} \asymp 1$ sufficiently large, $\hat{\tau}_{\dc}$ is an $\sqrt{n}$-consistent estimator for $\tau^\ast$ if either $s_{\zeta} = o(\sqrt{n} / \sqrt{\log p \log p \vee n})$ or $s_{\pi} \vee s_{r} = o(\sqrt{n} / \log p)$.

\subsection{Extension to approximately sparse GLMs}
\label{sec:approximate sparsity}

In previous sections, we have assumed that at least one of the nuisance parameters, $\pi^{\ast}$ or $r^{\ast}$, exactly follows a sparse GLM, with sparsity below $\sqrt{n} / \log p$. \citet{belloni2014pivotal, chernozhukov2018double, bradic2019minimax} and \citet{smucler2019unifying} considered the so-called approximately sparse GLMs to model the nuisance parameters. Similar to Condition \ref{cond:nuis}, the condition below imposes that at least one of the two nuisance parameters, $r^\ast$ or $\pi^\ast$, follows the approximately sparse GLM (note that here we adopt the definition of approximately sparse GLMs from \citet{bradic2019minimax}).
\begin{condition}\label{cond:nuis as}\leavevmode
There exist two (nonlinear) monotonically increasing, twice-differentiable link functions $\phi, \psi$ with uniformly bounded first and second derivatives, such that either of the following holds.
\begin{enumerate}
\item[(i)] The OR model can be approximated by a GLM with (nonlinear) link $\psi$ with the following properties: $p$ is sufficiently large compared to $n$ such that there exists $\beta_{p}^{\ast}$ for which
\begin{equation}
\label{tail or}
\sqrt{n} \left\{ \mathbb{E} \left[ (r^{\ast} (X) - \psi (\beta_{p}^{\ast \top} X))^{2} \right] \right\}^{1 / 2} = o (1),
\end{equation}
and for any $s$, there exists $\xi_{r} > 0$ and $\beta_{s}^{\ast}$ such that $\Vert \beta_{s}^{\ast} \Vert_0 \leq s$ and $\Vert \beta_{s}^{\ast} - \beta_{p}^{\ast} \Vert_{2} \ll s^{- \xi_{r}}$;
\item[(ii)] The PS model can be approximated by a GLM with (nonlinear) link $\phi$ with the following properties: $p$ is sufficiently large compared to $n$ such that there exists $\gamma_{p}^{\ast}$ for which
\begin{equation}
\label{tail ps}
\sqrt{n} \left\{ \mathbb{E} \left[ (\pi^{\ast} (X) - \phi (\gamma_{p}^{\ast \top} X))^{2} \right] \right\}^{1 / 2} = o (1),
\end{equation}
and for any $s$, there exists $\xi_{\pi} > 0$ and $\gamma_{s}^{\ast}$ such that $\Vert \gamma_{s}^{\ast} \Vert_0 \leq s$ and $\Vert \gamma_{s}^{\ast} - \gamma_{p}^{\ast} \Vert_{2} \ll s^{- \xi_{\pi}}$.
\end{enumerate}
\end{condition}

We silence the dependence on $s$ by denoting $\beta^{\ast}$ and $\gamma^{\ast}$ when $s = p$. To simplify the notation, we let $\bar{r}^{\ast} (\cdot) \coloneqq \psi (\beta^{\ast \top} \cdot)$ and $\bar{\pi}^{\ast} (\cdot) \coloneqq \phi (\gamma^{\ast \top} \cdot)$. \citet{bradic2019minimax} proved the following sufficient and necessary condition for the existence of $\sqrt{n}$-consistent estimator of the ATE $\tau$ when $p$ is sufficiently large compared to $n$ (see Assumption 9, 10 and the statement of Theorem 8 of \citet{bradic2019minimax}, to ensure that the approximation bias in $L_{2} (\mathbb{P})$-norm by all $p$ covariates is sufficiently small): 
\begin{equation}\label{minimal as}
\xi_{r} \vee \xi_{\pi} > 1 / 2.
\end{equation} 
As a corollary of the results in Section \ref{sec:atethm}, the DCal estimator achieves $\sqrt{n}$-consistency under \eqref{minimal as}, together with several other conditions to be stated below. We first modify the regularity conditions on the OR and PS coefficient estimates computed from the training dataset $\cD_{\tr}$ as follows. Condition \ref{cd:reg} needs to be replaced by the following:
\begin{condition}\label{cd:reg as}
Condition~\ref{cond:nuis as} (i) holds true and there exists a constant $m_\psi$ such that for any $t$, $\psi'(t), \psi''(t) \le m_\psi$. Moreover, when $\xi_{r} > 1 / 2$, with probability converging to $1$, there exists $\hat{\beta}$ computed from $\cD_{\tr}$ such that
 \[
    \|\hat{\beta} - \beta^*\|_1 = O \left( \left( \sqrt{\frac{n}{\log p}} \right)^{- \frac{2 \xi_{r} - 1}{2 \xi_{r} + 1}} \right) \quad\&\quad \|\hat{\beta} - \beta^*\|_2 = O \left( \left( \sqrt{\frac{n}{\log p}} \right)^{- \frac{2 \xi_{r}}{2 \xi_{r} + 1}} \right).
\]
\end{condition}
Similarly, Condition \ref{cd:prop} needs to be modified as
\begin{condition}\label{cd:prop as}
Condition~\ref{cond:nuis as} (ii) holds true and there exists a constant $m_\phi$ such that for any $t$, $\phi'(t), \phi''(t) \le m_\phi$. Moreover, when $\xi_{\pi} > 1 / 2$, with probability converging to $1$, there exists $\hat{\gamma}$ computed from $\cD_{\tr}$ such that
 \[
    \|\hat{\gamma} - \gamma^*\|_1 = O \left( \left( \sqrt{\frac{n}{\log p}} \right)^{- \frac{2 \xi_{\pi} - 1}{2 \xi_{\pi} + 1}} \right) \quad\&\quad \|\hat{\gamma} - \gamma^*\|_2 = O \left( \left( \sqrt{\frac{n}{\log p}} \right)^{- \frac{2 \xi_{\pi}}{2 \xi_{\pi} + 1}} \right).
\]
\end{condition}

\begin{remark}
The justification for Condition \ref{cd:reg as} and Condition \ref{cd:prop as} can be found in Appendix \ref{app:nuis as}. In fact, Condition \ref{cd:reg as} and Condition \ref{cd:prop as} closely resemble Condition \ref{cd:reg} and Condition \ref{cd:prop}, respectively, which are standard results of lasso. In the latter case, the $\ell_{1}$-norm convergence rates are multiples of the $\ell_{2}$-norm convergence rates by a factor of the square root of the sparsity $\sqrt{s_r}$ or $\sqrt{s_\pi}$, while in the former case, the $\ell_{1}$-norm convergence rates are multiples of the $\ell_{2}$-norm convergence rates by a factor of $(\sqrt{n / \log p})^{1 / (2 \xi_{r} + 1)}$ or $(\sqrt{n / \log p})^{1 / (2 \xi_{\pi} + 1)}$, which can be interpreted as the square root of the ``effective sparsity'' in the approximately sparse GLMs (also see Section 1 of \citet{bradic2019minimax}).
\end{remark}

Finally, we state the following corollary of Theorem \ref{thm:reg} and \ref{thm:prop} in Section \ref{sec:atethm} that demonstrates the $\sqrt{n}$-consistency of $\hat{\tau}_{\est}$ under \eqref{minimal as}. The proof is deferred to Appendix \ref{app:as}.
\begin{corollary}\leavevmode
\label{cor:as}
We have the following:
\begin{enumerate}[label = (\roman*)]
\item The statement of Theorem \ref{thm:reg} still holds, with Condition \ref{cd:reg} replaced by Condition \ref{cd:reg as}, and $s_{r} = o (\sqrt{n / \log p})$ replaced by $\xi_{r} > 1 / 2$;
\item The statement of Theorem \ref{thm:prop} still holds, with Condition \ref{cd:prop} replaced by Condition \ref{cd:prop as}, and $s_{\pi} = o (\sqrt{n / \log p})$ replaced by $\xi_{\pi} > 1 / 2$.
\end{enumerate}
\end{corollary}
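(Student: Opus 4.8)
The plan is to reduce both claims to Theorems~\ref{thm:reg} and~\ref{thm:prop} by substituting an \emph{effective sparsity} for the exact sparsity and then separately accounting for the (small) GLM approximation error. For part~(i), set $\tilde{s}_r \coloneqq (n/\log p)^{1/(2\xi_r+1)}$; a direct computation shows that the two rates postulated in Condition~\ref{cd:reg as} are precisely those of Condition~\ref{cd:reg} with $s_r$ replaced by $\tilde{s}_r$, namely $\|\hat{\beta}-\beta^\ast\|_1 = O(\tilde{s}_r\sqrt{\log p/n})$ and $\|\hat{\beta}-\beta^\ast\|_2 = O(\sqrt{\tilde{s}_r\log p/n})$, and that $\xi_r > 1/2$ holds if and only if $\tilde{s}_r = o(\sqrt{n}/\log p)$. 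Symmetrically, for part~(ii) set $\tilde{s}_\pi \coloneqq (n/\log p)^{1/(2\xi_\pi+1)}$, so that Condition~\ref{cd:prop as} is Condition~\ref{cd:prop} with $s_\pi \mapsto \tilde{s}_\pi$, and $\xi_\pi > 1/2 \Leftrightarrow \tilde{s}_\pi = o(\sqrt{n}/\log p)$. Since the proofs of Theorems~\ref{thm:reg}, \ref{thm:prop} and of the supporting Lemmas~\ref{lem:mu} and~\ref{lem:gammaest} never invoke exact sparsity of $\beta^\ast$ or $\gamma^\ast$ beyond the stated $\ell_1/\ell_2$ rates of $\hat{\beta}, \hat{\gamma}$ and the (unchanged) link-function, overlap and design conditions, every bias contribution there --- of order $s_r\log p/n$, $\sqrt{s_r}\,\log p/n$ (part~(i)) and $s_\pi\sqrt{\log p\,\log(p\vee n)}/\sqrt{n}$ (part~(ii)) in the exact-sparsity case --- becomes the same expression with $s_r\mapsto\tilde{s}_r$, resp.\ $s_\pi\mapsto\tilde{s}_\pi$; the first is $o_\pr(n^{-1/2})$ as soon as $\xi_r>1/2$, the third is $o_\pr(1)$ (after its $\sqrt{n}$-scaling, and using $\log(p\vee n)=\log p$ here) exactly when $\xi_\pi>1/2$, and in particular $\|\hat{\bmu}\|_2 = o_\pr(n^{1/4})$ is recovered.

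The only genuinely new ingredient is the GLM approximation error of whichever nuisance parameter is assumed approximately sparse: in part~(i), $\delta_i \coloneqq r^\ast(X_i) - \psi(X_i^\top\beta^\ast)$ with $n^{-1}\sum_i\delta_i^2 = o_\pr(p^{-2\xi_r})$ (taking $\beta^\ast$ as the $s=p$ population GLM fit and using Condition~\ref{cond:nuis as}(i)); in part~(ii), $\delta^\pi_i \coloneqq \pi^\ast(X_i) - \phi(X_i^\top\gamma^\ast)$ with $n^{-1}\sum_i(\delta^\pi_i)^2 = o_\pr(p^{-2\xi_\pi})$; both were identically zero under Condition~\ref{cond:nuis}. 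In part~(i) this error enters (a)~the Taylor expansion $r_i^\ast - \tilde{r}_i = (r_i^\ast - \psi(X_i^\top\hat{\beta})) - \hmu_i$, where it contributes an extra term $n^{-1}\sum_i(T_i/\tpi_i - 1)\delta_i$, which is $O_\pr((n^{-1}\sum_i\delta_i^2)^{1/2})$ because $T_i/\tpi_i$ is bounded by the trimming constraint~\eqref{eq:const3}, and (b)~the feasibility step of Lemma~\ref{lem:mu}, where the ``oracle'' calibrator targeting $r_i^\ast - \r_i$ now has $\ell_2$-norm $\lesssim \sqrt{n}\,(\|\hat{\beta}-\beta^\ast\|_2 + (n^{-1}\sum_i\delta_i^2)^{1/2})$; in part~(ii) it enters (c)~the argument that the trimming of $\hat{\pi}$ is moot, and the control of $n^{-1}\sum_i(X_i^\top(\tilde{\gamma}-\gamma^\ast))^2$ in~\eqref{dc decomp}. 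Because we have taken $p\asymp n^2$, each of these contributions, after its relevant $\sqrt{n}$-scaling, is of order $\sqrt{n}\,p^{-\xi_r}=n^{1/2-2\xi_r}=o(1)$ in part~(i), resp.\ $\sqrt{n}\,p^{-\xi_\pi}=o(1)$ in part~(ii), using $\xi_r>1/2$, resp.\ $\xi_\pi>1/2$, and is dominated by the $\tilde{s}_r$- (resp.\ $\tilde{s}_\pi$-) driven terms; hence the limiting linear representations of Theorems~\ref{thm:reg} and~\ref{thm:prop} and the associated variances $\bar{\sigma}_r^2$, $\bar{\sigma}_\pi^2$, $\sigma_\mu^2$ are unchanged.

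For part~(ii) there is one further dependency: the proof of Theorem~\ref{thm:prop} is launched from the $\sqrt{n}$-consistency of $\hat{\tau}_\dipw$, which Section~\ref{sec:dipw} asserts under $s_\pi \ll \sqrt{n}/\log p$ via the analysis of \citet{wang2020debiased}. Its approximately-sparse analogue --- $\hat{\tau}_\dipw$ is $\sqrt{n}$-consistent whenever $\xi_\pi>1/2$ --- follows by the same substitution $s_\pi\mapsto\tilde{s}_\pi$ together with the approximately-sparse $\ell_1$-regularized regression machinery already employed in Appendix~B of \citet{bradic2019minimax} and in \citet{smucler2019unifying}, which we invoke here. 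With Lemma~\ref{lem:gammaest} re-read with $s_\pi\mapsto\tilde{s}_\pi$, the three-term bound~\eqref{dc decomp} then closes exactly as in Section~\ref{sec:prop}.

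The main obstacle is not the bookkeeping above but the fact that, unlike in Section~\ref{sec:atethm}, the lasso targets $\beta^\ast$ and $\gamma^\ast$ are now generically \emph{dense}, so prediction-norm and quadratic-remainder bounds such as $n^{-1}\sum_i(X_i^\top(\hat{\beta}-\beta^\ast))^2 \asymp \tilde{s}_r\log p/n$ can no longer be read off from the sparsity of $\hat{\beta}-\beta^\ast$; they must be obtained from the restricted-eigenvalue/compatibility analysis of the approximately-sparse lasso \citep{belloni2014pivotal, bradic2019minimax}, which is exactly the reason the rates in Conditions~\ref{cd:reg as} and~\ref{cd:prop as} take the form they do. A secondary point is that the choice $p\asymp n^2$ is used only to neutralize the approximation-error terms in items~(a)--(c); relaxing it, in the spirit of Assumption~9 of \citet{bradic2019minimax}, would require re-examining those three items, which we do not pursue here.
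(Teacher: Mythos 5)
Your proposal is correct and follows essentially the same route as the paper's proof sketch: both rerun the exact-sparsity arguments of Theorems~\ref{thm:reg} and~\ref{thm:prop} with the rates in Conditions~\ref{cd:reg as}/\ref{cd:prop as} (the ``effective sparsity'' substitution), and separately show that the GLM approximation-error terms—entering the oracle-calibrator feasibility behind Lemma~\ref{lem:mu}, the bias decomposition of Theorem~\ref{thm:reg}, the Lemma~\ref{lem:gammaest} analogue, and the $\hat{\tau}_\dipw$ baseline—are negligible because $p \asymp n^2$ and $\xi_r$ (resp.\ $\xi_\pi$) exceeds $1/2$. The only cosmetic difference is that the paper handles the $\hat{\tau}_\dipw$ step via an explicit two-term decomposition adapted from \citet{wang2020debiased}, whereas you defer to the approximately-sparse lasso machinery of \citet{bradic2019minimax} and \citet{smucler2019unifying}; the substance is the same.
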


\section{Simulation studies}
\label{sec:sim}

As discussed in Remark \ref{rem:comp}, the actual DCal approach studied theoretically in previous sections might not be computationally feasible. In this section, to alleviate the computational burden, we consider a modified DCal estimator for ATE, simply by changing the PS calibration step originally described in \eqref{ps operator} to:
\begin{equation}
\label{modified dcal}
\bm{\tilde{\pi}} \coloneqq \calL_{\bm{\pi}, \tilde{B}_{\infty}^{p} (M_{\pi})} \left( \Vert \cdot \Vert_{2}^{2}; \Delta_{2, T, \pi}; \mathrm{h}; \mathrm{b} \right)
\end{equation}
with the same $\mathrm{h}$ and $\mathrm{b}$. This modified PS calibration step leads to easier implementation and faster algorithmic convergence {\it in practice}. It also mirrors closely the OR calibration step \eqref{sc operator}, in which one does not parameterize the calibrated OR estimator as GLMs. To lighten the notation, we use $\hat{\tau}_{\dc}$ to denote the modified DCal estimator \textit{only} in this section.

\begin{remark}[More comments on the modified DCal estimator]
Instead of calibrating the estimated PS regression coefficients $\hat{\gamma}$ as in the original DCal estimator, the modified DCal estimator directly calibrates the PS model. By speculating the proof of Theorem~\ref{thm:prop}, the original DCal estimator, by construction of the objective function, controls $\Vert \hat{\gamma} - \tilde{\gamma} \Vert_{1}$ when $\gamma^{\ast}$ is in fact sparse, which is crucial when controlling the difference between the DCal estimator $\hat{\tau}_{\dc}$ and the SCal estimator $\hat{\tau}_{\dipw}$ without calibrating the PS model (when unnecessary). It is still an open question whether minimizing $\Vert \bm{\tilde{\pi}} \Vert_{2}^{2}$ directly can also imply the conclusions in Theorem~\ref{thm:prop}. We emphasize that the main purpose of the simulation studies conducted here is to demonstrate that the essential idea of Double Calibration can be used to construct estimators with similar practical performance to competing methods detailed below.
\end{remark}

To study the practical merit of the modified DCal estimator, we compare it against the following competing methods: the IPW estimator $\hat{\tau}_{\IPW}$ \citep{horvitz1952generalization}, the estimator $\hat{\tau}_{g}$ based on the g-formula \citep{robins1986new}, DML/AIPW estimator $\hat{\tau}_{\DML}$ \citep{robins1994estimation, hahn1998role, chernozhukov2018double}, the Targeted Maximum Likelihood Estimation (TMLE) estimator $\hat{\tau}_{\TMLE}$ \citep{van2006targeted, van2011targeted}, the Approximate Residual Balancing (ARB) estimator $\hat{\tau}_{\ARB}$ \citep{athey2018approximate}, the Regularized Calibrated (RCal) estimator $\hat{\tau}_{\RCAL}$ \citep{tan2020regularized}, and the high-dimensional Covariate Balancing Propensity Score (hdCBPS) estimator $\hat{\tau}_{\cbps}$ \citep{ning2020robust}. More details on implementing these competing methods can be found in Appendix \ref{app:details}. We also set up a GitHub repository (\href{https://github.com/Cinbo-Wang/DCal}{https://github.com/Cinbo-Wang/DCal}) for readers who are interested in replicating our simulation results.

We consider two different settings to mirror our theoretical results: Section \ref{sec:sparse ps} below considers sparse PS and dense OR models, whilst Section \ref{sec:sparse or} considers the opposite setting. In both cases, the observed data is comprised of $N$ i.i.d. samples $\{X_{i}, T_{i}, Y_{i}\}_{i = 1}^{N}$, with $X \sim \calN_{p} (0, \Sigma)$, where $\Sigma_{i, j} \equiv 0.9^{|i - j|}$ for $i, j = 1, \cdots, p$. We consider the following combinations of $(n, p)$:
$$
(n, p) \in \{(200, 400), (400, 800), (800, 1000), (1600, 1000)\},
$$ 
with $s \in \{10, 20, 50\}$ for each possible $(n, p)$ pair. For each setting, 200 Monte Carlos are sampled to get the summary statistics such as (absolute) biases, $\sqrt{N}$-biases, standard errors, mean squared errors, coverage probabilities, and etc. and all the simulated data are generated anew in each run. Variance estimators of different ATE estimators are simply based on their respective influence functions.

\subsection{Case I: Sparse PS \& Dense OR}
\label{sec:sparse ps}

First, we consider the case where the PS is a sparse logistic-linear model and the OR is a dense nonlinear model:
\begin{align*}
& T | X = x \sim \text{Bernoulli} \left( \pi (x) = \psi (x^{\top} \gamma) \right), \\
& Y | X = x, T = t \sim \calN \left( r (x, t), 1 \right),
\end{align*}
where $\psi (z) \equiv 1 / (1 + \exp (-z))$ in this section. In particular, we choose $\gamma$ to satisfy the following: $\Vert \gamma \Vert_{2} = 1$, $\gamma_j \sim \mathrm{Uniform} ([1, 2])$ for $j = 1, \cdots, s$ and $\gamma_{s + 1} = \cdots = \gamma_{p} = 0$ so only the first $s$ coordinates of $\gamma$ are active. For the outcome model $r (x, t)$, we first transform the covariates $X$ nonlinearly as follows:
\begin{align*}
& \tilde{X}_{1} \equiv \texttt{bs} (X_{1}, 100)^{\top} \cdot \left( 1, 1 / 2, 1 / 3, \cdots, 1 / 100 \right), \tilde{X}_{2} \equiv 2 / (1 + \exp (- X_{2})), \\
& \tilde{X}_{3} \equiv \exp (X_{3} / 2), \tilde{X}_{4} \equiv X_{4} / (1 + \exp (X_{3})), \tilde{X}_{5} = X_{4} X_{5} / 10,
\end{align*}
and then scale $\tilde{X}_{2}, \cdots, \tilde{X}_{5}$ to have zero mean and unit variance. Here $\texttt{bs} (x, 100)$ denotes the B-spline transformation of $x$ with 100 degrees-of-freedom. We then define the outcome model $r (x, t)$ as
\begin{align*}
r (x, t) \equiv \left( |\tilde{x}_{1} + \tilde{x}_{2} - \tilde{x}_{3} / 2 + \tilde{x}_{4} / 3 - \tilde{x}_{5} / 4| + 0.05 \right)^{-1} + x^{\top} \beta - t,
\end{align*}
where $\Vert \beta \Vert_{2} \equiv 1$ and $\beta_{j} \propto j^{-1}$ for $j = 1, \cdots, p$.

The simulation results are displayed in Figure \ref{fig: simu_sparse_PS}, Table \ref{tab: comp_time_cont}, Table \ref{tab: sparse_PS_measure_CI} (see Appendix \ref{app:sparse ps}), and Table \ref{tab: sparse_PS_measure} (see Appendix \ref{app:sparse ps}). We summarize the numerical results below:
\begin{itemize}
\item Reading from root-N scaled absolute estimation errors from Figure \ref{fig: simu_sparse_PS}, as expected, the g-formula estimator $\hat{\tau}_{g}$ generally has the largest bias as the initial OR estimator without calibration should not even be a consistent estimator. Interestingly, although the PS model is sparse, $\hat{\tau}_{\dc}$ still has better performance than the IPW and AIPW estimators, possibly due to its calibration step.

 $\hat{\tau}_{\TMLE}$, $\hat{\tau}_{\ARB}$, and $\hat{\tau}_{\cbps}$ perform similarly to $\hat{\tau}_{\dc}$, but the performance of $\hat{\tau}_{\TMLE}$ deteriorates when the sparsity of the PS model is relatively large (the rightmost panel of Figure \ref{fig: simu_sparse_PS}). $\hat{\tau}_{\ARB}$ generally has slightly larger bias than $\hat{\tau}_{\dc}$, but the difference is not very pronounced. The performance of $\hat{\tau}_{\cbps}$ deteriorates as the sample size becomes smaller even when the sparsity of the PS model is low, but when the sample size is large, $\hat{\tau}_{\cbps}$ performs as good as $\hat{\tau}_{\dc}$, and sometimes better than $\hat{\tau}_{\dc}$ when the sparsity of the PS model is very low (the bottom left corner of Figure \ref{fig: simu_sparse_PS}).
 
In this setting, $\hat{\tau}_{\RCAL}$ does not perform as well as $\hat{\tau}_{\TMLE}$, $\hat{\tau}_{\ARB}$, $\hat{\tau}_{\cbps}$ and $\hat{\tau}_{\dc}$; 
we conjecture that this is due to the relatively low overlap in propensity score and in Appendix \ref{app:sparse ps} we indeed show that $\hat{\tau}_{\RCAL}$ has much more improved performance when the violation of the positivity assumption is alleviated.
Similar conclusion can be reached by reading from Tables \ref{tab: sparse_PS_measure_CI} and \ref{tab: sparse_PS_measure}, which respectively report more detailed numerical values of the Mean/Median Absolute Biases (MAB), standard errors, and Root-Mean-Squared-Errors (RMSE), and the coverage probabilities and lengths of 95\% Wald confidence intervals centered around different ATE estimators.
\item Reading from the left panel of Table \ref{tab: comp_time_cont}, it is quite obvious that $\hat{\tau}_{\dc}$ enjoys shorter computation time than $\hat{\tau}_{\cbps}$ and $\hat{\tau}_{\RCAL}$.
\end{itemize}

\begin{figure}[htpb]
    \centering
    \includegraphics[width = \textwidth, page=1]{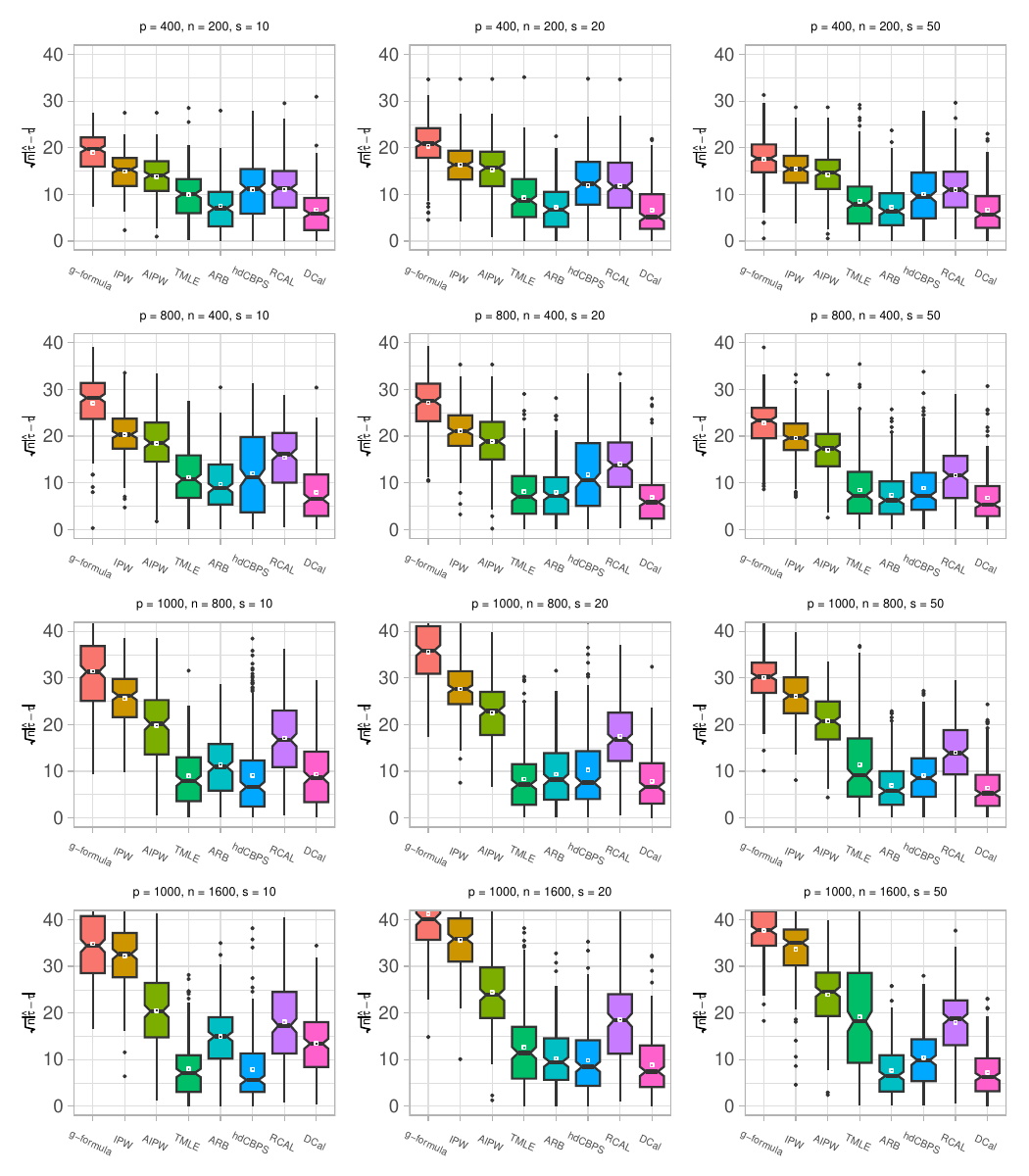}
    \caption{Boxplots of the $\sqrt{n}$-scaled estimation error $\sqrt{n}|\hat{\tau}-\tau|$ under different sample size $n$, dimension $p$ and sparsity level $s$ under the \textit{Sparse PS \& Dense OR} setting. The white dots correspond to mean.}
    \label{fig: simu_sparse_PS}
\end{figure}

\subsection{Case II: Dense PS \& Sparse OR}
\label{sec:sparse or}

Next, we consider the case where the PS is a dense nonlinear model bounded between 0 and 1 and the OR is a sparse linear model:
\begin{align*}
& T | X = x \sim \mathrm{Bernoulli} \left( \pi (x) \right), \\
& Y | X = x, T = t \sim \calN \left( r (x, t), 1 \right).
\end{align*}
Here we take the PS model to be
\begin{align*}
\pi (X) \equiv \min\{\max\{\psi (\check{X}_{1} - \frac{1}{2} \check{X}_{2} + \frac{1}{4} \check{X}_{3} - \frac{1}{8} \check{X}_{4} + \gamma^{\top} X), 0.05\}, 0.95\},
\end{align*}
where $\check{X}_{1} \coloneqq e^{0.5 X_{1}}$, $\check{X}_{2} \coloneqq 10 + X_{2} / (1 + e^{X_{1}})$, $\check{X}_{3} \coloneqq (0.05 X_{1} X_{3} + 0.6)^{2}$, $\check{X}_{4} \coloneqq (X_{2} + X_{4} + 10)^{2}$, and $\gamma_{j} \propto j^{-1}$ with $\Vert \gamma \Vert_{2} \equiv 2$. We take the OR model to be
\begin{align*}
r (X, t) \equiv \beta^{\top} X - 1 + 2 t
\end{align*}
where we first draw $\beta_{j} \sim \mathrm{Uniform} ([1, 2])$ for $j = 1, \cdots, s$ and set $\beta_{j} \equiv 0$ for $j = s + 1, \cdots, p$. We eventually normalize $\beta$ such that $\Vert \beta \Vert_{2} = 1$.

The simulation results are displayed in Figure \ref{fig: simu_sparse_OR}, Table \ref{tab: comp_time_cont}, Table \ref{tab: sparse_OR_measure_CI} (see Appendix \ref{app:sparse or}), and Table \ref{tab: sparse_OR_measure} (see Appendix \ref{app:sparse or}). We summarize the numerical results below:
\begin{itemize}
\item Reading from root-N scaled absolute estimation errors from Figure \ref{fig: simu_sparse_OR}, as expected, the IPW estimator $\hat{\tau}_{\IPW}$ generally has the worst bias as the initial PS estimator without calibration is not even a consistent estimator. Interestingly, although the OR model is sparse, $\hat{\tau}_{\dc}$ still has better performance than the g-formula and AIPW estimators, possibly due to its calibration step.

$\hat{\tau}_{\TMLE}$, $\hat{\tau}_{\ARB}$, and $\hat{\tau}_{\cbps}$ perform similarly to $\hat{\tau}_{\dc}$, but the performance of $\hat{\tau}_{\TMLE}$ again deteriorates when the sparsity of the OR model is relatively large (the rightmost panel of Figure \ref{fig: simu_sparse_OR}). In this case, $\hat{\tau}_{\ARB}$ and $\hat{\tau}_{\cbps}$ perform as well as $\hat{\tau}_{\dc}$. This is not very surprising at least for $\hat{\tau}_{\ARB}$, since it is proved that $\hat{\tau}_{\ARB}$ is $\sqrt{n}$-consistent in this regime. 
 
$\hat{\tau}_{\RCAL}$ does not perform as well as $\hat{\tau}_{\ARB}$, $\hat{\tau}_{\cbps}$ and $\hat{\tau}_{\dc}$: the performance of $\hat{\tau}_{\TMLE}$ recovers when the sample size increases, 
whereas for the performance of $\hat{\tau}_{\RCAL}$ we again conjecture that this is due to the relatively large $\ell_{2}$-norm of $\gamma$ (recall that $\Vert \gamma \Vert_{2} = 2$) and in Appendix \ref{app:sparse or} we indeed show that $\hat{\tau}_{\RCAL}$ has much more improved performance when $\Vert \gamma \Vert_{2}$ decreases. 
It is worth noting that by looking at the results across different sample sizes, $\hat{\tau}_{\RCAL}$ is still close to being $\sqrt{n}$-consistent empirically and the main difference between $\hat{\tau}_{\RCAL}$ and other estimators is in the constant.

Similar conclusion can be reached by reading from Tables \ref{tab: sparse_OR_measure_CI} and \ref{tab: sparse_OR_measure}, which respectively report the coverage probabilities and lengths of 95\% Wald confidence intervals centered around different ATE estimators, and more detailed numerical values of the Mean/Median Absolute Biases (MAB), standard errors, and Root-Mean-Squared-Errors (RMSE).
\item Reading from the right panel of Table \ref{tab: comp_time_cont}, it is quite obvious that $\hat{\tau}_{\dc}$ again enjoys much shorter computation time than $\hat{\tau}_{\cbps}$ and $\hat{\tau}_{\RCAL}$.
\end{itemize}

\begin{figure}[htpb]
    \centering
    \includegraphics[width = \textwidth, page=1]{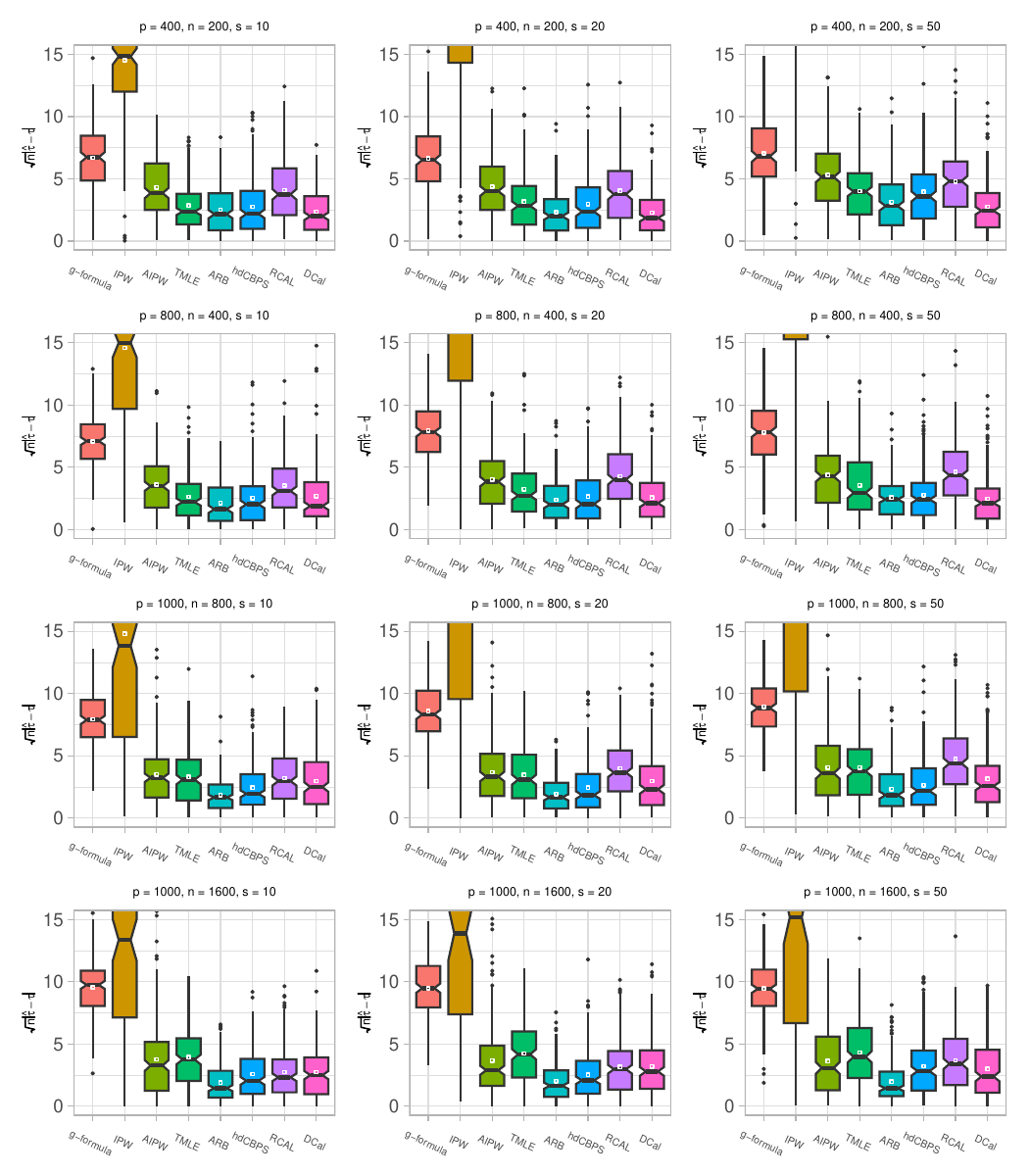}
    \caption{Boxplots of the $\sqrt{n}$-scaled estimation error $\sqrt{n}|\hat{\tau}-\tau|$ under different sample size $n$, dimension $p$ and sparsity level $s$ under the \textit{Dense PS \& Sparse OR} setting. The white dots correspond to means.}
    \label{fig: simu_sparse_OR}
\end{figure}

\begin{longtable}{@{}cccllllllll@{}}
\caption{Computation time (in minutes) of different methods averaged over 200 Monte Carlos.}
\label{tab: comp_time_cont}\\
\toprule
$n$ & $p$ & $s$ & \multicolumn{4}{c}{\textbf{Sparse PS \& Dense OR}} & \multicolumn{4}{c}{\textbf{Dense PS \& Sparse OR}} \\* \cmidrule(lr){4-7}
\cmidrule(lr){8-11}
 &  &  & \multicolumn{1}{c}{\textbf{hdCBPS}} & \multicolumn{1}{c}{\textbf{RCAL}} & \multicolumn{1}{c}{$\textbf{RCAL}^\star$\footnotemark} & \multicolumn{1}{c}{\textbf{DCal}} & \multicolumn{1}{c}{\textbf{hdCBPS}} & \multicolumn{1}{c}{\textbf{RCAL}} & \multicolumn{1}{c}{$\textbf{RCAL}^\star$} & \multicolumn{1}{c}{\textbf{DCal}} \\* \midrule
\endfirsthead
\multicolumn{11}{c}%
{{\bfseries Table \thetable\ continued from previous page}} \\
\endhead
\bottomrule
\endfoot
\endlastfoot
200 & 400 & 10 & 0.08 & 0.62 & 0.16 & 0.02 & 1.40 & 0.45 & 0.11 & 0.02 \\
200 & 400 & 20 & 0.10 & 0.59 & 0.15 & 0.02 & 2.06 & 0.46 & 0.12 & 0.02 \\
200 & 400 & 50 & 0.06 & 0.45 & 0.11 & 0.02 & 2.08 & 0.47 & 0.12 & 0.02 \\
400 & 800 & 10 & 0.10 & 4.33 & 0.97 & 0.06 & 3.10 & 2.71 & 0.55 & 0.07 \\
400 & 800 & 20 & 0.20 & 5.15 & 1.01 & 0.06 & 4.52 & 2.67 & 0.51 & 0.07 \\
400 & 800 & 50 & 0.05 & 4.81 & 0.88 & 0.06 & 4.55 & 2.67 & 0.51 & 0.08 \\
800 & 1000 & 10 & 0.64 & 60.77 & 6.84 & 0.16 & 7.47 & 24.41 & 2.72 & 0.19 \\
800 & 1000 & 20 & 0.30 & 43.52 & 7.18 & 0.16 & 10.19 & 24.30 & 2.36 & 0.21 \\
800 & 1000 & 50 & 0.19 & 48.06 & 6.01 & 0.16 & 10.09 & 24.24 & 2.26 & 0.18 \\
1600 & 1000 & 10 & 3.37 & 437.60 & 22.48 & 0.36 & 22.58 & 254.20 & 5.99 & 0.44 \\
1600 & 1000 & 20 & 1.87 & 513.17 & 24.26 & 0.34 & 27.20 & 248.58 & 5.03 & 0.41 \\
1600 & 1000 & 50 & 0.62 & 328.59 & 26.79 & 0.34 & 27.19 & 244.39 & 4.58 & 0.43 \\* \bottomrule
\end{longtable}
\footnotetext{$\text{RCAL}^\star$ refers to RCAL with a five-fold cross-validation and eleven possible values for the tuning parameters. Its performance is slightly inferior to the RCAL presented in the main text, although the difference is not statistically significant. For the sake of clarity, the statistical performance of $\text{RCAL}^\star$ is not reported for the settings in the main text (see \href{https://github.com/Cinbo-Wang/DCal}{the accompanied GitHub website} for more details).}

\section{Concluding remarks}
\label{sec:conclusions}

In this paper, we present a novel methodology called Double-Calibration, which produces $\sqrt{n}$-consistent and asymptotic normal estimators for ATE and regression coefficient in high-dimensional semiparametric partially linear models under the \eqref{minimal sparsity} condition. There are several problems that are worth considering in future works. First, as mentioned in the end of the Introduction, we only construct $\sqrt{n}$-consistent and asymptotic normal estimators for ATE. Semiparametric efficiency bound following Le Cam's approach under the \eqref{minimal sparsity} condition remains an open question, but ideas in \citet{mukherjee2020minimax} might be promising directions to explore. Second, here we only considered high-dimensional sparse GLMs. It is an open question to bridge the following five regimes in high-dimensional statistics: the sparse but $p \gg n$ regime (our case), the sparse but $p \leq n$ regime, the dense but $1 \ll p \ll n$ regime \citep{liu2017semiparametric, liu2023hoif, su2023estimated}, the dense but proportional asymptotic regime ($p / n \rightarrow c \in (0, \infty)$) \citep{yadlowsky2022explaining, jiang2022new, celentano2023challenges, chen2024method}, and beyond ($p \gg n$ but no sparsity) \citep{robins2008higher, robins2023minimax, liu2021adaptive}. \citet{verzelen2018adaptive} and \citet{kong2018estimating} have made some attempt at this problem for quadratic functionals. Third, our framework might also be extended to more complex causal inference settings, such as high-dimensional mediation analysis \citep{lin2023testing} or high-dimensional proximal causal learning \citep{deaner2021many}. Finally, we point out another interesting observation that warrants further study. When the nuisance parameters are modeled as sparse high-dimensional GLMs, at least for ATE, we have seen that $\sqrt{n}$-consistent estimation can be achieved by the proposed Double-Calibration strategy. However, in the context of classical nonparametric-type models (e.g. \Holder{} class) for the nuisance parameters, the only known $\sqrt{n}$-consistent estimators for ATE under minimal condition is based on higher-order influence functions \citep{robins2008higher, liu2017semiparametric, liu2023hoif}. It remains to be seen if similar idea to that developed in this paper can also be applied to the nonparametric setting or vice versa.




\section*{Acknowledgments}
The authors are grateful for the extremely insightful discussions and comments by \href{https://scholar.harvard.edu/rajarshi/home}{Rajarshi Mukherjee} and Jamie Robins. The simulations in this paper were run on the Siyuan-1 cluster supported by the Center for High Performance Computing at Shanghai Jiao Tong University. LL is affiliated with the Shanghai Artificial Intelligence Laboratory and The Smart Justice Lab at the Koguan Law School of Shanghai Jiao Tong University. LL gratefully acknowledges funding support by NSFC Grant No.12471274 and No.12090024, Shanghai Municipal Science and Technology Grant No.21ZR1431000, No.2021SHZDZX0102, and No.21JC1402900.

\bibliographystyle{plainnat}
\bibliography{Master}

\appendix

\section{Proofs of the results concerning Section~\ref{sec:atethm}}




\subsection{Proof of Lemma~\ref{lem:program}}

\begin{proof}[Proof of Lemma~\ref{lem:program}]
Let $W_i \coloneqq T_i / \pi_i^* - 1$. Then by conditioning on $\{\tilde{\bX}, \hat{\mbb \mu}, \hat{\gamma}, \hat{\beta}\}$, the $W_i$'s are independent sub-Gaussian random variables with variance proxy bounded below by a constant $(1 - c_\pi) / c_\pi$. From this, as a direct consequence of the Chernoff bound and the fact that almost surely, $T_i / \pi_i^* \le c_\pi^{-1}$ since we are under Condition~\ref{cd:overlap}, there exist constants $c_1, c_2, c'> 0$ and $M_\pi \ge c_\pi^{-1}$ sufficiently large such that by choosing $\eta_{\pi_1} \le c_1 \sqrt{\frac{\log p}{n}}, \eta_{\pi_2} \le c_2 \sqrt{\frac{\log p \vee n}{n}}$ for any realization of $\{\tilde{\bX}, \hat{\mbb \mu}, \hat{\gamma}, \hat{\beta}\}$, 
\[
\pr(\textrm{\eqref{eq:const1}--\eqref{eq:const3} hold with $\pi_i$ replaced by $\pi_i^*$} \mid \{\tilde{\bX}, \hat{\mbb \mu}, \hat{\gamma}, \hat{\beta}\}) \le c' n^{-m}.
\]

Second, from Condition~\ref{cd:overlap}, we have that all the $\pi_i^*$'s are in the domain space of the link function $\phi(\cdot)$, which means that the vector $\phi^{-1}({\mbb \pi}^*) := (\phi^{-1}(\pi^*_1), \ldots, \phi^{-1}(\pi^*_n))^\top$ is well defined. Moreover, from the construction of $\tilde{\mathbf X}$ and Condition~\ref{cd:all}(ii), it is immediate that the matrix $\tilde{\mathbf X}$ is almost surely full row rank, so that almost surely there is at least one $\gamma$ satisfying
\[
\tilde{\mathbf X} \gamma = \phi^{-1}({\mbb \pi}^*).
\]
Putting together, we prove the desired result.
\end{proof}

\subsection{Proof of Lemma~\ref{lem:mu}}
\label{app:mu}
Let
\[
\hat\mu_{\ora, i} \coloneqq \frac{\pi_i^* (r_i^* - \hat{r}_i)}{\hpi_i}, \hat{\bmu}_{\ora} \coloneqq (\hat\mu_{\ora, 1}, \cdots, \hat\mu_{\ora, n})^{\top}.
\]
We then have the following Lemma.

\begin{lemma}\label{lem:feasible}
Suppose 
Conditions~\ref{cd:ignorability}--\ref{cd:overlap} and \ref{cd:all}--\ref{cd:est}
hold and let the constant $M_\gamma$ be given; then by choosing $\eta_r \asymp \sqrt{\frac{\log p}{n}}, M_r \asymp 1$ large enough, with probability converging to $1$, $\hat{\mbb \mu}_{\ora}$ is a feasible solution to~\eqref{eq:dipwprogram}--\eqref{eq:dipwinfinity}.
\end{lemma}

\begin{proof}
Suppose $\cD_\tr$ is under the event defined by Condition~\ref{cd:est}, we have that by conditioning on $\cD_\tr$, the $j$-th index of the vector ${\mb X}_\aux^\top {\mb \Pi}_\aux \tilde{\mb R}_\aux$ is a sum of $n$ i.i.d. sub-exponential random variables, so that by Bernstein's inequality and a union bound, we have that given any $m \in \mathbb{N}$, there exist positive constants $c_1$ -- $c_4$ depending on $m$ such that when $\cD_\tr$ is under the event defined by Condition~\ref{cd:est},
\[
\pr\left(\left\|\frac{1}{n_{\aux}} {\bX}_{\aux}^\top {\mbb \Pi}_{\aux} \tilde{\mathbf R}_{\aux} - \E\left[\frac{1}{n_{\aux}} {\bX}_{\aux}^\top {\mbb \Pi}_{\aux} \tilde{\mathbf R}_{\aux} \mid \cD_\tr\right]\right\|_\infty \ge c_1 \sqrt{\frac{\log p}{n_{\aux}}} \mid \cD_\tr\right) \le c_2 p^{-m},
\]
and
\[
\pr\left(\left\|\frac{1}{n} {\bX}^\top {\mbb \Pi} \hat{\mbb \mu}_{\ora} - \E\left[\frac{1}{n} {\bX}^\top {\mbb \Pi} \hat{\mbb \mu}_{\ora} \mid \cD_\tr\right]\right\|_\infty \ge c_3 \sqrt{\frac{\log p}{n}} \mid \cD_\tr\right) \le c_4 p^{-m}.
\]
Since $\cD$ and $\cD_\aux$ are i.i.d. sampled from the same distribution, we can obtain that almost surely, 
\begin{equation*}
\E\left[\frac{1}{n} {\bX}^\top {\mbb \Pi} \hat{\mbb \mu}_{\ora} \mid \cD_\tr\right] = \E\left[\frac{1}{n_{\aux}} {\bX}_{\aux}^\top {\mbb \Pi}_{\aux} \tilde{\mathbf R}_{\aux} \mid \cD_\tr\right].
\end{equation*}
In light of the above three inequalities and a union bound, we prove that \eqref{eq:dipwprogram} can be satisfied with probability converging to $1$ by choosing $\eta_r \asymp \sqrt{\frac{\log p}{n}}$ sufficiently large.

The control of~\eqref{eq:dipwinfinity} is straightforward by simply setting $M_r := \frac{m_r + m_Y}{\phi(-M_\gamma)}$.
\end{proof}

\begin{proof}[Proof of Lemma~\ref{lem:mu}]
	Suppose that we are under the event where Lemma~\ref{lem:feasible} holds true, then it is easy to see that almost surely,
 \begin{equation}
\label{exact unbiased}
\|\hat{\mbb \mu}\|_2 \le \|\hat{\mbb \mu}_{\ora}\|_2 \lesssim \sqrt{\sum_{i=1}^n (X_i^\top (\hat{\beta} - \beta^*))^2},
 \end{equation}
 where the last inequality follows from (i) the definition of $\hat{\pi}$ and (ii) that $\psi'(\cdot)$ is uniformly bounded since we are under Condition~\ref{cd:reg}. 
 
 Now applying a Bernstein's inequality~\citep[Lemma 2.7.7]{vershynin2018high}, we have that there exists constants $c_1, c_2 > 0$ such that almost surely,
 \begin{equation}\label{eq:betax}
     \pr\left(\|\mb X (\hat{\beta} - \beta^*)\|_2 / \sqrt{n} \ge c_1 \|\hat{\beta} - \beta^*\|_2 \mid \hat{\beta}\right) \le e^{-c_2 n}.
 \end{equation}
 In light of both inequalities, the desired result follows from $s_r = o( \sqrt{n} / \log p)$.
\end{proof}

\subsection{Proof of Theorem~\ref{thm:reg}}
\label{app:reg}
\begin{proof}
Using Taylor expansion, we have that for some $\eta_i \in [0, 1]$ for $i = 1, \cdots, n$, the following decomposition holds 
\begin{equation}
\label{reg expansion}
\begin{split}
\hat{\tau}_\est & - \bar{\tau}^\ast \\
= & \ \underset{\mathrm{I}}{\underbrace{\frac{1}{n} \sum_{i=1}^n \left( 1 - \frac{T_i}{\tpi_i} \right) \psi'(X_i^\top \hat{\beta}) X_i^\top (\hat{\beta} - \beta^*) }} \\
& + \underset{\mathrm{II}}{\underbrace{\frac{1}{n} \sum_{i=1}^n \left( 1 - \frac{T_i}{\tpi_i} \right)  \psi''(\eta_i X_i^\top \hat{\beta} + (1 - \eta_i) X_i^\top \beta^*) \left(X_i^\top (\hat{\beta} - \beta^*)\right)^2 }} \\
& + \underset{\mathrm{III}}{\underbrace{\frac{1}{n} \sum_{i=1}^n \left( 1 - \frac{T_i}{\tpi_i} \right) \hmu_i}} + \frac{1}{n} \sum_{i=1}^n \frac{T_i \varepsilon_i(1)}{\tpi_i}.
\end{split}
\end{equation}
We first consider $\mathrm{I}$. Invoking Lemma~\ref{lem:program}, we have that the constraint~\eqref{eq:const1} holds with probability converging to $1$. In light of this and Condition~\ref{cd:reg}, we have that with probability converging to $1$,
	\begin{align*}
		|\mathrm{I}| & \leq \left\| \frac{1}{n} \sum_{i=1}^n \left(\frac{T_i}{\tpi_i} - 1\right) \psi'(X_i^\top \hat{\beta}) X_i\right\|_\infty \|\hat{\beta} - \beta^*\|_1 \\
		& \lesssim s_r \frac{\log p}{n} \cdot \max_j \frac{\|\psi'(\bX \hat{\beta}) \odot \bX_j\|_2}{\sqrt{n}} \lesssim  s_r \frac{\log p}{n} \cdot \max_j \frac{\|\bX_j\|_2}{\sqrt{n}},
	\end{align*}
 where for the last inequality we use the assumption that $\psi'$ is uniformly bounded since we are under Condition~\ref{cd:reg}. Under Condition~\ref{cd:all}(ii), we further have that almost surely, $\max_j \|\bX_j\|_2 / \sqrt{n} \lesssim 1$. Putting these together and recalling that $s_r = o(\sqrt{n} / \log p)$, we can conclude that $|\mathrm{I}| = o_\pr (n^{-1/2})$.
 
For $\mathrm{II}$, since Conditions~\ref{cd:est} and~\ref{cd:reg} hold, together with constraint~\eqref{eq:const3}, we have that, with probability converging to $1$,  
 \[
 \mathrm{II} \lesssim \frac{1}{n} \sum_{i=1}^n \left(X_i^\top (\hat{\beta} - \beta^*)\right)^2.
 \]
 In light of this upper bound and recall~\eqref{eq:betax}, we have that with probability converging to $1$, we have that $\mathrm{II} \lesssim \|\hat{\beta} - \beta^*\|_2^2$. Then it immediately follows that $\mathrm{II} = o_\pr(n^{-1/2})$ using again Condition~\ref{cd:reg} and that $s_r = o(\sqrt{n} / \log p)$.
 
	For $\mathrm{III}$, as a direct consequence of \eqref{eq:const2} and the conclusion of Lemma~\ref{lem:mu}, we have that
	\[
	|\mathrm{III}| = o_\pr\left(\frac{1}{\sqrt{n}} \cdot \sqrt{\frac{\log p}{\sqrt{n}}}\right) = o_\pr (n^{-1/2}).
	\]
	Given the above controls of $\mathrm{I}$, $\mathrm{II}$ and $\mathrm{III}$, the desired result follows.
\end{proof}

\subsection{Proof of Lemma~\ref{lem:gammaest}}
\label{app:gammaest}
\begin{proof}
Without loss of generality, throughout this section we assume that $p \ge n$, so that $\tilde{X}_i$ and $X_i$ are equivalent. The proof of the case $p < n$ can be obtained via applying an analogous argument. From Lemma~\ref{lem:program}, we have that with probability converging to $1$, $\gamma^*$ is a feasible solution to~\eqref{eq:program} when the tuning parameters are chosen according to Lemma~\ref{lem:program}, so that with probability converging to $1$,
	\[
	\|\tilde{\gamma} - \hat{\gamma}\|_1 \le \|\gamma^* - \hat{\gamma}\|_1 \lesssim s_\pi \sqrt{\frac{\log p}{n}},
	\]
	where for the last inequality we invoke Condition~\ref{cd:prop}. Moreover,
 \begin{equation}\label{eq:tildel1}
     \|\tilde{\gamma} - \gamma^*\|_1 \le \|\tilde{\gamma} - \hat{\gamma}\|_1 + \|\hat{\gamma} - \gamma^*\|_1 \lesssim s_\pi \sqrt{\frac{\log p}{n}}.
 \end{equation}
	
 We now focus on proving the second result. Working under the event given by Lemma~\ref{lem:program}, using that we are under constraint~\eqref{eq:constprop}, we have that
\[
\left\|\frac{1}{n} \sum_{i=1}^n (T_i / \tpi_i - 1) \tilde{X}_i^\top \right\|_\infty \lesssim \sqrt{\frac{\log p \vee n}{n}},
\]
and that the above inequality still holds with $\tpi_i$ replaced by $\pi_i^*$. Putting these together yields that under the event given by Lemma~\ref{lem:program},
\begin{equation}\label{eq:proginf}
\left\|\frac{1}{n} \sum_{i=1}^n (T_i / \tpi_i - T_i / \pi_i^*) \tilde{X}_i^\top \right\|_\infty \lesssim \sqrt{\frac{\log p \vee n}{n}}.
\end{equation}

In light of~\eqref{eq:tildel1} and~\eqref{eq:proginf}, and by applying \Holder's inequality, we have that with probability converging to $1$,
\[
\left|\frac{1}{n} \sum_{i=1}^n (T_i / \tpi_i - T_i / \pi_i^*) \tilde{X}_i^\top (\tilde\gamma - \gamma^*)\right| \le \left\|\frac{1}{n} \sum_{i=1}^n (T_i / \tpi_i - T_i / \pi_i^*) \tilde{X}_i^\top \right\|_\infty \|\tilde\gamma - \gamma^*\|_1 \lesssim s_\pi \frac{\sqrt{\log p \log p \vee n}}{n}.
\]
Using Taylor expansion we have that there exists $\iota_1, \cdots, \iota_n \in [0, 1]$ such that the left hand side in the above inequality is equal to 
\[
\frac{1}{n} \sum_{i=1}^n \frac{T_i}{\tpi_i \pi_i^*} \phi'(\iota_i X_i^\top \gamma^* + (1 - \iota) X_i^\top \tilde\gamma) (X_i^\top (\tilde\gamma - \gamma^*))^2.
\]

From Conditions~\ref{cd:all}(ii) and~\eqref{eq:tildel1}, we have that with probability converging to $1$, for all $i$, $|X_i^\top (\tilde\gamma - \gamma^*)| \le \|X_i\|_\infty \|\tilde\gamma - \gamma^*\|_1 \lesssim 1$. Using Condition~\ref{cd:overlap} and the constraint on the link function provided by Condition~\ref{cond:nuis}(ii), we have that almost surely $|X_i^\top \gamma^*| \lesssim 1$ as well. From this, and the lower bound in Condition~\ref{cd:dphi}, we have that there exists a constant $c > 0$ such that with probability converging to $1$, for all $i$, 
\[
\phi'(\iota_i X_i^\top \gamma^* + (1 - \iota) X_i^\top \tilde\gamma) \ge c,
\]
from which we have
\[
\frac{1}{n} \sum_{i=1}^n \frac{T_i}{\tpi_i \pi_i^*} (X_i^\top (\tilde\gamma - \gamma^*))^2 \lesssim \frac{1}{n} \sum_{i=1}^n \frac{T_i}{\tpi_i \pi_i^*} \phi'(\iota_i X_i^\top \gamma^* + (1 - \iota) X_i^\top \tilde\gamma) (X_i^\top (\tilde\gamma - \gamma^*))^2 \lesssim s_\pi \frac{\sqrt{\log p \log p \vee n}}{n}.
\]
This, in combination with ~\eqref{eq:const3}, yields
\[ 
\frac{1}{n} \sum_{i=1}^n \frac{T_i}{\pi_i^*} (X_i^\top (\tilde\gamma - \gamma^*))^2 \lesssim s_\pi \frac{\sqrt{\log p \log p \vee n}}{n}.
\]

Finally, recall that we have~\eqref{eq:betax} and Condition~\ref{cd:overlap}, we have that with probability converging to $1$,
\begin{equation}\label{eq:hatbern}
	\frac{1}{n} \sum_{i=1}^n \frac{T_i}{\pi_i^*} (X_i^\top (\hat{\gamma} - \gamma^*))^2 \lesssim \frac{1}{n} \sum_{i=1}^n (X_i^\top (\hat{\gamma} - \gamma^*))^2 \lesssim s_\pi \frac{\log p}{n}.
\end{equation}
Putting the above results together yields 
\[
\frac{1}{n} \sum_{i=1}^n \frac{T_i}{\pi_i^*} (X_i^\top (\tilde\gamma - \hat{\gamma}))^2 \lesssim s_\pi \frac{\sqrt{\log p \log p \vee n}}{n}.
\]

For the final missing piece, we first observe the following:
\begin{align*}
    \frac{1}{n} \sum_{i=1}^n (\tilde{X}_i^\top (\tilde{\gamma} - \gamma^*))^2 & = \frac{1}{n} \sum_{i=1}^n T_i / \pi_i^*(\tilde{X}_i^\top (\tilde{\gamma} - \gamma^*))^2 + (\tilde{\gamma} - \gamma^*)^\top \left( \frac{\tilde{\bX}^\top \tilde{\bX}}{n} - \frac{1}{n} \sum_{i=1}^n \frac{T_i}{\pi_i^*} \tilde{X}_i \tilde{X}_i^\top  \right) (\tilde{\gamma} - \gamma^*) \\
    & \lesssim s_\pi \frac{\sqrt{\log p \log p \vee n}}{n} + \left\|\frac{\tilde\bX^\top \tilde\bX}{n} - \frac{1}{n} \sum_{i=1}^n \frac{T_i}{\pi_i^*} \tilde{X}_i \tilde{X}_i^\top \right\|_\infty \|\tilde{\gamma} - \gamma^* \|_1^2,
\end{align*}
where the for the last inequality we apply \Holder's inequality.
By Hoeffding's inequality and a union bound, with probability converging to $1$, 
\[
\left\|\frac{\tilde\bX^\top \tilde\bX}{n} - \frac{1}{n} \sum_{i=1}^n \frac{T_i}{\pi_i^*} \tilde{X}_i \tilde{X}_i^\top \right\|_\infty \lesssim \sqrt{\frac{\log p \vee n}{n}}.
\]
From above, we conclude that with probability converging to $1$,
\[
\frac{1}{n} \sum_{i=1}^n (\tilde{X}_i^\top (\tilde{\gamma} - \gamma^*))^2 \lesssim s_\pi \frac{\sqrt{\log p \log p \vee n}}{n} + s_\pi^2 \frac{\log p}{n} \sqrt{\frac{\log p \vee n}{n}} \lesssim s_\pi \frac{\sqrt{\log p \log p \vee n}}{n},
\]
where the last inequality follows because $s_\pi = o(\sqrt{n} / \log p)$. In light of this and~\eqref{eq:tildel1}, the stated conclusion follows.
\end{proof}


\subsection{Proof of Theorem~\ref{thm:prop}}
\label{app:prop}

We begin with the following lemma on the negligible effect of trimming in $\hat{\pi}$ when $s_\pi = o(\sqrt{n} / \log p)$:
\begin{lemma}\label{lem:pitrim}
Under the conditions of Theorem~\ref{thm:prop}, we have that by choosing $M_\gamma \asymp 1$ sufficiently large, with probability converging to $1$, for all $i$,
\[
\hat{\pi}_i \equiv \phi(X_i^\top \hat{\gamma}) \quad\&\quad \hat{\pi}_{\aux, i} \equiv \phi(X_{\aux, i}^\top \hat{\gamma}).
\]
\end{lemma}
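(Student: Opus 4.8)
The plan is to show that the trimming built into the definition of $\hat{\pi}(\cdot)$ is inactive on every observation of $\cD$ and of $\cD_\aux$ with probability tending to one, i.e., that $\max_i |X_i^\top\hat{\gamma}| \vee \max_i |X_{\aux,i}^\top\hat{\gamma}| \le M_\gamma$ with probability $\to 1$. By the very definition of $\hat{\pi}(x) = \phi\big(x^\top\hat\gamma\,\one\{|x^\top\hat\gamma|\le M_\gamma\} + \sign(x^\top\hat\gamma)M_\gamma\,\one\{|x^\top\hat\gamma|> M_\gamma\}\big)$, this inequality immediately gives $\hat{\pi}_i = \phi(X_i^\top\hat\gamma)$ and $\hat{\pi}_{\aux,i} = \phi(X_{\aux,i}^\top\hat\gamma)$ for all $i$. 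I would obtain the inequality from the split $|X_i^\top\hat\gamma| \le |X_i^\top\gamma^*| + |X_i^\top(\hat\gamma-\gamma^*)|$, controlling each piece uniformly in $i$.

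For the first, ``oracle index'' piece, I would use that under Condition~\ref{cd:prop} (hence Condition~\ref{cond:nuis}(ii)) the PS is a genuine GLM, $\pi_i^* = \phi(X_i^\top\gamma^*)$, while Condition~\ref{cd:overlap} gives $c_\pi < \pi_i^* < 1-c_\pi$ almost surely. Since $\phi(t)\to 1$ as $t\to\infty$ and $\phi(t)\to 0$ as $t\to-\infty$ (Condition~\ref{cond:nuis}), the set $\{t:\,c_\pi\le\phi(t)\le 1-c_\pi\}$ is bounded, so $|X_i^\top\gamma^*|\le M_0$ for all $i$ (and all $i$ indexing $\cD_\aux$) almost surely, where $M_0\coloneqq\sup\{|t|:\,c_\pi\le\phi(t)\le 1-c_\pi\}<\infty$ depends only on $c_\pi$ and $\phi$ and satisfies $M_0\asymp 1$.

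For the second, ``estimation error'' piece, I would apply H\"older's inequality, $\max_i |X_i^\top(\hat\gamma-\gamma^*)| \le \big(\max_i\|X_i\|_\infty\big)\,\|\hat\gamma-\gamma^*\|_1$. By Condition~\ref{cd:subg} the first coordinate of $X$ equals $1$ and $\|Z\|_\infty\le m_Z$ almost surely, so $\max_i\|X_i\|_\infty\le 1\vee m_Z\asymp 1$ on both samples, with no union-bound cost since this is an a.s. bound. Because $\hat\gamma$ is computed from $\cD_\tr$, which is independent of $\cD$ and $\cD_\aux$, Condition~\ref{cd:prop} gives $\|\hat\gamma-\gamma^*\|_1 = O(s_\pi\sqrt{\log p/n})$ with probability $\to 1$, and the sparsity condition of Theorem~\ref{thm:prop} ($s_\pi=o(\sqrt n/\sqrt{\log p\,(\log p\vee n)})$, in particular $s_\pi=o(\sqrt n/\log p)$) forces $s_\pi\sqrt{\log p/n}=o(1)$. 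Hence $\max_i|X_i^\top(\hat\gamma-\gamma^*)|=o_\pr(1)$, and likewise for the $\cD_\aux$ analogue.

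Combining the two bounds, with probability $\to 1$ we get $\max_i|X_i^\top\hat\gamma|\le M_0+o_\pr(1)$ and the same on $\cD_\aux$; choosing $M_\gamma\asymp 1$ with $M_\gamma>M_0$ (e.g.\ $M_\gamma=M_0+1$) makes every trimming indicator vanish on both samples, which is exactly the claim. I do not expect a real obstacle here: the argument is purely deterministic once $\|\hat\gamma-\gamma^*\|_1$ is controlled, and the only point that needs care is that the uniform-in-$i$ control of $\|X_i\|_\infty$ comes from the almost-sure boundedness in Condition~\ref{cd:subg} rather than from the sub-Gaussian tail alone (using only the latter one would pick up an extra $\sqrt{\log(np)}$ factor, which is still absorbed by $s_\pi=o(\sqrt n/\log p)$, but the boundedness keeps the proof cleanest).
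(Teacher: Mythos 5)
Your proposal is correct and follows essentially the same route as the paper's own proof: bound $|X_i^\top\gamma^*|$ almost surely via Condition~\ref{cd:overlap} together with the monotonicity and tail behavior of $\phi$, then control $\max_i |X_i^\top(\hat\gamma-\gamma^*)|$ by H\"older's inequality using $\max_i\|X_i\|_\infty \lesssim 1$ and the $\ell_1$-rate in Condition~\ref{cd:prop} with $s_\pi = o(\sqrt{n}/\log p)$, so that a sufficiently large $M_\gamma \asymp 1$ renders the trimming inactive on both $\cD$ and $\cD_\aux$. Your write-up is simply a more explicit version of the same argument.
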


\begin{proof}
Without loss of generality, we only prove the case for $\hat{\pi}_i$.
From Condition~\ref{cd:overlap} and using the monotonicity of the link function $\phi$, we have that almost surely, there exists a $M_\gamma^*$ such that
\[
|X_i^\top \gamma^*| \le M_\gamma^*.
\]
Using \Holder's inequality, we have
\[
\max_i |X_i^\top \hat{\gamma} - X_i^\top \gamma^*| \le \max_i \|X_i\|_\infty \|\hat{\gamma} - \gamma^*\|_1 = o_\pr(1),
\]
where for the last inequality we use Condition~\ref{cd:prop} and that $s_\pi = o(\sqrt{n} / \log p)$. Thus the statement of this lemma is proved.
\end{proof}

Armed with the above lemma, we can see that in the large sample limit, $\hat{\tau}_\dipw$ is numerically equivalent to the estimator by redefining $\hat{\pi} \coloneqq \phi(X^\top \hat{\gamma})$, which allows us to apply the same analysis in \citet{wang2024debiased} to understand the asymptotic property of $\hat{\mbb \mu}$. Specifically, we have

\begin{lemma}\label{lem:infinities}
	Under the conditions of Theorem~\ref{thm:prop}, with probability converging to $1$, we have the following:
	\[
	\left\|\frac{1}{n} \sum_{i=1}^n \left(\frac{T_i}{\pi_i^*} - 1\right) \frac{\hmu_i}{\hpi_i} \phi'(X_i^\top \gamma^*) \tilde{X}_i \right\|_\infty \lesssim \sqrt{\frac{\log p \vee n}{n}}
	\]
	and
	\begin{equation}\label{eq:partialinfinity}
		\left\|\frac{1}{n} \sum_{i=1}^n \left(\frac{T_i (Y_i - \r_i)}{\hpi_i^2} - \frac{\hmu_i}{\hpi_i}\right) \phi'(X_i^\top \hat{\gamma}) \tilde{X}_i \right\|_\infty \lesssim \sqrt{\frac{\log p \vee n}{n}}.
	\end{equation}
\end{lemma}

\begin{proof}
	By conditioning on $(\tilde{\bX}, \hat{\mbb \mu})$, $\{\frac{T_i}{\pi_i^*} - 1, i = 1, \cdots, n\}$ of the main dataset $\cD$ are independent mean-zero and bounded random variables. Then as a direct consequence of Hoeffding's inequality, we obtain that for some constants $c_1, c_2 > 0$, almost surely
 \[
 \pr\left(\left\|\frac{1}{n} \sum_{i=1}^n \left(\frac{T_i}{\pi_i^*} - 1\right) \frac{\hmu_i}{\hpi_i} \phi'(X_i^\top \gamma^*) \tilde{X}_i \right\|_\infty \le c_1  \max_i \frac{\hat{\mu}_i}{\hat{\pi}_i} \sqrt{\frac{\log p \vee n}{n}} \;\bigg|\; \tilde{\bX}, \hat{\mbb \mu} \right) \le c_2 (p \vee n)^{-1}.
 \]
 Using above, the first inequality follows from Lemma~\ref{lem:mu}.
 
 For the second inequality, from Lemmas~\ref{lem:feasible} and~\ref{lem:pitrim}, we get, with probability converging to $1$, 
	\[
	\left\|\frac{1}{n_{\aux}} \sum_{i=1}^{n_{\aux}} \frac{T_{i, \aux} (Y_{i, \aux} - \r_{i, \aux})}{\hpi_{i, \aux}^2} \phi'(X_{i, \aux}^\top \hat{\gamma}) X_{i, \aux} - \frac{1}{n} \sum_{i=1}^n \frac{\hmu_i}{\hpi_i} \phi'(X_i^\top \hat{\gamma}) X_i \right\|_\infty \lesssim \sqrt{\frac{\log p}{n}}.
	\]
It also follows from the proof of Lemma~\ref{lem:feasible} that with probability converging to $1$, 
\[
	\left\|\frac{1}{n_{\aux}} \sum_{i=1}^{n_{\aux}} \frac{T_{i, \aux} (Y_{i, \aux} - \r_{i, \aux})}{\hpi_{i, \aux}^2} \phi'(X_{i, \aux}^\top \hat{\gamma}) X_{i, \aux} - \frac{1}{n} \sum_{i=1}^n \frac{T_i (Y_i - \r_i)}{\hpi_i^2} \phi'(X_i^\top \hat{\gamma}) X_i \right\|_\infty \lesssim \sqrt{\frac{\log p}{n}}.
\]

In light of the above inequalities, and using that the augmented entries of $\tilde{X}_i$ are i.i.d. random variables drawn uniformly from $[-1, 1]$ and are independent of $(\bX, \cD_{\aux}, \cD_\tr)$, the second inequality is again a direct consequence of Hoeffding's inequality.
\end{proof}

\begin{proof}[Proof of Theorem~\ref{thm:prop}] 
Without loss of generality, in this proof we assume throughout that $p \ge n$, so that $\bX$ and $\tilde{\bX}$ are equivalent. The proof of the case $p < n$ can be derived via an analogous argument. To prove the desired statement, it boils down to proving the following term to be $o_\pr (1)$.
\begin{equation}
\label{Q}
Q \coloneqq \frac{1}{\sqrt{n}} \sum_{i=1}^n \left(\frac{T_i (Y_i - \r_i - \hmu_i)}{\tpi_i} - \frac{T_i (Y_i - \r_i - \hmu_i)}{\pi_i^*}\right).
\end{equation}
We apply the following decomposition:
\begin{align}
	Q & = \frac{1}{\sqrt{n}} \sum_{i=1}^n \left(\frac{T_i (Y_i - \r_i)}{\pi_i^* \tpi_i} - \frac{\hmu_i}{\tpi_i}\right) (\pi_i^* - \tpi_i) - \frac{1}{\sqrt{n}} \sum_{i=1}^n \left(\frac{T_i}{\pi_i^*} - 1\right) \frac{\hmu_i}{\tpi_i} (\pi_i^* - \tpi_i) \notag \\
 & = \underset{\mathrm{I}}{\underbrace{\frac{1}{\sqrt{n}} \sum_{i=1}^n \left(\frac{T_i (Y_i - \r_i)}{\pi_i^* \tpi_i} - \frac{\hmu_i}{\tpi_i}\right) (\hpi_i - \tpi_i)}} + \underset{\mathrm{II}}{\underbrace{\frac{1}{\sqrt{n}} \sum_{i=1}^n \left(\frac{T_i (Y_i - \r_i)}{\pi_i^* \tpi_i} - \frac{\hmu_i}{\tpi_i}\right) (\pi_i^* - \hpi_i)}} \\
 & \quad - \underset{\mathrm{III}}{\underbrace{\frac{1}{\sqrt{n}} \sum_{i=1}^n \left(\frac{T_i}{\pi_i^*} - 1\right) \frac{\hmu_i}{\tpi_i} (\pi_i^* - \tpi_i)}}. \notag
 \label{eq:prop}
\end{align}
We first consider $\mathrm{I}$, which we further decompose as
\begin{align*}
    \mathrm{I} & = \underset{\mathrm{I}_1}{\underbrace{\frac{1}{\sqrt{n}} \sum_{i=1}^n \left(\frac{T_i (Y_i - \r_i)}{\hpi_i^2} - \frac{\hmu_i}{\hpi_i}\right) (\hpi_i - \tpi_i)}} + \underset{\mathrm{I}_2}{\underbrace{\frac{1}{\sqrt{n}} \sum_{i=1}^n \left(\frac{T_i (Y_i - \r_i)}{\pi_i^* \tpi_i} - \frac{T_i (Y_i - \r_i)}{\hpi_i^2}\right) (\hpi_i - \tpi_i)}} \\
    &\quad - \underset{\mathrm{I}_3}{\underbrace{\frac{1}{\sqrt{n}} \sum_{i=1}^n \left(\frac{\hmu_i}{\hpi_i} - \frac{\hmu_i}{\tpi_i}\right) (\hpi_i - \tpi_i)}}
\end{align*}
Using Taylor expansion and Lemma~\ref{lem:pitrim}, we have that for some $\iota_i \in [0, 1]$ for $i = 1,\ldots,n$, 
\begin{align*}
	\mathrm{I}_1 = & \ \frac{1}{\sqrt{n}} \sum_{i=1}^n \left(\frac{T_i (Y_i - \r_i)}{\hpi_i^2} - \frac{\hmu_i}{\hpi_i}\right) \phi'(X_i^\top \hat{\gamma}) X_i^\top(\hat{\gamma} - \tilde{\gamma}) \\
	& + \frac{1}{\sqrt{n}} \sum_{i=1}^n \left(\frac{T_i (Y_i - \r_i)}{\hpi_i^2} - \frac{\hmu_i}{\hpi_i}\right) \phi''(\iota_i X_i^\top \hat{\gamma} + (1 - \iota_i) X_i^\top \tilde{\gamma}) (X_i^\top(\hat{\gamma} - \tilde{\gamma}))^2.
\end{align*}
For the second term, using that with probability converging to $1$, $\frac{T_i (Y_i - \r_i)}{\hpi_i^2}, \frac{\hmu_i}{\hpi_i}$ are all bounded below by some constant and that $\phi''$ is uniformly bounded, as well as Lemma~\ref{lem:gammaest} and $s_\pi = o(\sqrt{n} / \sqrt{\log p \log p \vee n})$, we have the second term in the above decomposition is of order $o_\pr(1)$. For the first term, applying \Holder's inequality, it is a direct consequence of Lemmas~\ref{lem:gammaest} and~\ref{lem:infinities}.

For $\I_2$ and $\I_3$, from Lemma~\ref{lem:pitrim}, using mean value theorem and that $\phi'(\cdot)$ is uniformly bounded, we get with probability converging to $1$, for all $i$,
\[
|\hpi_i - \pi_i^*| \lesssim \left|X_i^\top(\hat{\gamma} - \gamma^*)\right| \quad\&\quad |\hpi_i - \tpi_i| \lesssim \left|X_i^\top(\hat{\gamma} - \tilde{\gamma})\right|.
\]
Moreover, using that $\hat{\gamma}$ and the main data set $\cD$ are independent, Condition~\ref{cd:prop} and a Bernstein's inequality (see e.g.~\eqref{eq:betax}), we can prove that with probability converging to $1$,
\[
\frac{1}{n} \sum_{i = 1}^n (X_i^\top(\hat{\gamma} - \gamma^*))^2 = O\left(s_\pi \frac{\log p}{n}\right).
\]
In light of the above two inequalities, the control of $\I_2$ and $\I_3$ follows from the same analysis as the second term in the decomposition of $\I_1$. Putting together we prove that $\I = o_\pr(1)$. Using an analogous argument we also prove that $\I\I = o_\pr(1)$.

It now remains to consider $\I\I\I$. Using Taylor expansion, we have that for some $\iota_i' \in [0, 1]$ for $i = 1,\ldots,n$, 
\begin{align*}
	\I\I\I = & \ \frac{1}{\sqrt{n}} \sum_{i=1}^n \left(\frac{T_i}{\pi_i^*} - 1\right) \frac{\hmu_i}{\tpi_i} \phi'(X_i^\top \gamma^*) X_i^\top (\gamma^* - \tilde{\gamma}) \\
	& + \frac{1}{\sqrt{n}} \sum_{i=1}^n \left(\frac{T_i}{\pi_i^*} - 1\right) \frac{\hmu_i}{\hpi_i} \phi''(\iota_i' X_i^\top \gamma^* + (1 - \iota_i') X_i^\top \tilde{\gamma}) (X_i^\top (\gamma^*- \tilde{\gamma}))^2.
\end{align*}
Applying an analogous analysis as above, we can show that the second terms in the above two displays are of order $o_\pr (1)$. Thus, to prove the desired result, it remains to show that the first terms in the above two displays are also of order $o_\pr (1)$. Applying \Holder's inequality, this is a direct consequence of Lemmas~\ref{lem:gammaest} and~\ref{lem:infinities}.
\end{proof}

\section{Proofs of the results concerning Section~\ref{sec:extensions}}

\subsection{Proof of Theorem~\ref{thm:coef}}
\label{app:coef}
\subsubsection{Proof of Theorem~\ref{thm:coef} (i)}

Using the same analysis as in Lemma~\ref{lem:program}, there exists some $\eta_{\pi_1} \asymp \sqrt{\frac{\log p}{n}}, \eta_{\pi_2} \asymp \sqrt{\frac{\log p \vee n}{n}}, M_\pi \asymp 1$ such that with probability converging to $1$, the $\tilde{\gamma}^*$ such that $\pi_i^* \equiv \phi(\tilde{X}_i^\top \tilde\gamma^*)$ for all $i$ is a feasible solution to~\eqref{ps operator reg}. We now apply the decomposition
\begin{align*}
	\hat{\tau}_\est - \tau^* = & \ \underset{\eqqcolon \, \mathrm{I}}{\underbrace{(\hat{\tau} - \tau^*) \left(1 - \frac{\sum_{i=1}^n T_i (T_i - \tpi_i)}{\sum_{i=1}^n (T_i - \tpi_i)^2}\right)}} + \underset{\eqqcolon \, \mathrm{II}}{\underbrace{\frac{\sum_{i=1}^n (T_i - \tpi(X_i)) (r(X_i) - \r(X_i))}{\sum_{i=1}^n (T_i - \tpi(X_i))^2}}} \\
	 & - \underset{\eqqcolon \, \mathrm{III}}{\underbrace{\frac{\sum_{i=1}^n (T_i - \tpi(X_i)) \hat{\mu}_i}{\sum_{i=1}^n (T_i - \tpi(X_i))^2}}} + \frac{\sum_{i=1}^n (T_i - \tpi(X_i)) \varepsilon_i}{\sum_{i=1}^n (T_i - \tpi(X_i))^2}.
\end{align*}

The first term can be bounded, with probability converging to $1$, by
\[
\mathrm{I} \le \left|\frac{\sum_{i=1}^n \tpi(X_i) (T_i - \tpi(X_i))}{\sum_{i=1}^n (T_i - \tpi(X_i))^2}\right| |\hat{\tau} - \tau^*| \lesssim \sqrt{s_r} \frac{\log p}{n},
\]
where for the last inequality we apply that $|\hat{\tau} - \tau^*| \lesssim \sqrt{s_r \frac{\log p}{n}}$ and the constraint~\eqref{eq:const4}. For the second term, following again the constraint~\eqref{eq:const4}, we get that with probability converging to $1$,
\[
\mathrm{II} \lesssim \frac{1}{n} \sum_{i=1}^n (T_i - \tpi(X_i)) (r(X_i) - \r(X_i)).
\]
From above and applying exactly the same proof as Theorem~\ref{thm:reg}, with probability converging to $1$,
\[
\mathrm{II} \lesssim \left\|\frac{1}{n} \sum_{i=1}^n (T_i - \tpi(X_i)) \psi'(X_i^\top \hat{\beta}) X_i\right\|_\infty \|\hat{\beta} - \beta^*\|_1 + \frac{1}{n} \sum_{i=1}^n (X_i^\top(\hat{\beta} - \beta^*))^2 \lesssim s_r \frac{\log p}{n}.
\]

For the third term, it directly follows from the constraints in the calibration program that with probability converging to $1$,
\[
\mathrm{III} \lesssim \frac{\sqrt{\log p}}{n} \|\hat{\mbb \mu}\|_2.
\]
Then the desired result is a direct consequence of the following lemma.

\begin{lemma}
	Consider the set up of Theorem~\ref{thm:coef} (i), with probability converging to $1$, we have that $\|\hat{\mbb \mu}\|_2 = O(n^{1/4})$.
\end{lemma}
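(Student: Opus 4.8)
The plan is to mirror the proof of Lemma~\ref{lem:mu}: exhibit an \emph{oracle calibrator} $\hat{\mbb\mu}_{\ora}$ that is feasible for the quadratic program defining $\hat{\mbb\mu}$ with probability converging to $1$, transfer the $\ell_2$ bound to $\hat{\mbb\mu}$ by minimality, and then bound $\|\hat{\mbb\mu}_{\ora}\|_2$ directly. In the regression-coefficient setting the program is the one in~\eqref{dipw_obj}--\eqref{eq:dipwinfinity} with ${\mbb \Pi}$ and ${\mbb \Pi}_{\aux}$ carrying diagonal entries $\phi'(X_i^\top\hat\gamma)$ and with $\tilde R_i = Y_i - T_i\hat\tau - \r_i$, so the natural choice is
\[
\hat\mu_{\ora,i} \coloneqq \pi_i^*(\tau^* - \hat\tau) + (r_i^* - \r_i),
\]
which is exactly the conditional mean $\bbE[\tilde R_i \mid X_i, \cD_\tr]$ under the partially linear model~\eqref{plm} (using $\bbE[\varepsilon_i \mid X_i] = 0$ and $\bbE[T_i \mid X_i] = \pi_i^*$).

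First I would check feasibility of $\hat{\mbb\mu}_{\ora}$. The box constraint $\|\hat{\mbb\mu}_{\ora}\|_\infty \le M_r$ is immediate for $M_r$ large enough, since $|\pi_i^*|,|r_i^*| \le m$ by Condition~\ref{cd:regcoef}(i), $\tau^*$ is fixed, and $|\hat\tau|,|\r_i| \le \hat m$ with probability converging to $1$ by Condition~\ref{cd:regcoef}(ii). For the balancing constraint, the key point is that $\hat\mu_{\ora,i}$ is a function of $X_i$ and $\cD_\tr$ alone and $\bbE[\tilde R_i \mid X_i, \cD_\tr] = \hat\mu_{\ora,i}$, so $\tfrac{1}{n_{\aux}}{\bX}_{\aux}^\top {\mbb \Pi}_{\aux} \tilde{\mathbf R}_{\aux}$ and $\tfrac{1}{n}{\bX}^\top {\mbb \Pi}\,\hat{\mbb\mu}_{\ora}$ have the same conditional expectation given $\cD_\tr$, namely $\bbE[X\,\phi'(X^\top\hat\gamma)(\pi^*(\tau^*-\hat\tau)+r^*-\r)]$. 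Since the coordinates of the summands $X_{i,j}\phi'(X_i^\top\hat\gamma)\tilde R_i$ and $X_{i,j}\phi'(X_i^\top\hat\gamma)\hat\mu_{\ora,i}$ are bounded with probability converging to $1$ (Conditions~\ref{cd:subg} and~\ref{cd:regcoef} and boundedness of $\phi'$), Hoeffding's inequality with a union bound over the $p$ coordinates puts both empirical averages within $O_\pr(\sqrt{\log p/n})$ of that common value, so their difference is $\le \eta_r$ once $\eta_r \asymp \sqrt{\log p/n}$ is taken large enough. This is the analogue of Lemma~\ref{lem:feasible} and follows the argument in \citet[Theorem~19]{wang2020debiased}. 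Feasibility then gives $\|\hat{\mbb\mu}\|_2 \le \|\hat{\mbb\mu}_{\ora}\|_2$.

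It then remains to bound $\|\hat{\mbb\mu}_{\ora}\|_2$. Using $(x+y)^2 \le 2x^2+2y^2$ and $|\pi_i^*| \le m$,
\[
\|\hat{\mbb\mu}_{\ora}\|_2^2 \;\lesssim\; n\,(\tau^*-\hat\tau)^2 \;+\; \sum_{i=1}^n (r_i^*-\r_i)^2 .
\]
The first term is $O(s_r\log p)$ by the hypothesis $|\hat\tau-\tau^*| = O(\sqrt{s_r\log p/n})$. For the second, Condition~\ref{cd:reg} gives $r_i^* = \psi(X_i^\top\beta^*)$ and $\r_i = \psi(X_i^\top\hat\beta)$ with $\psi'$ bounded by $m_\psi$, so $\sum_i(r_i^*-\r_i)^2 \lesssim \sum_i(X_i^\top(\hat\beta-\beta^*))^2$; exactly as in the proof of Lemma~\ref{lem:mu}, Bernstein's inequality \citep[Lemma~2.7.7]{vershynin2018high} applied to the sub-exponential variables $(X_i^\top(\hat\beta-\beta^*))^2$ (conditionally on $\cD_\tr$, with $X$ sub-Gaussian by Condition~\ref{cd:subg}) yields $\sum_i(X_i^\top(\hat\beta-\beta^*))^2 \lesssim n\|\hat\beta-\beta^*\|_2^2 = O(s_r\log p)$ with probability converging to $1$, again by Condition~\ref{cd:reg}. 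Combining, $\|\hat{\mbb\mu}\|_2^2 = O(s_r\log p)$, i.e. $\|\hat{\mbb\mu}\|_2 = O(\sqrt{s_r\log p})$.

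The hard part will be the feasibility step: verifying, via the partially linear structure, that the population version of the balancing constraint vanishes at $\hat{\mbb\mu}_{\ora}$, and then controlling the two empirical averages uniformly over all $p$ coordinates. The $\ell_2$ bound and the minimality reduction are routine adaptations of the ATE arguments in Lemmas~\ref{lem:mu} and~\ref{lem:feasible}.
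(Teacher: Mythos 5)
Your proposal matches the paper's proof: same oracle calibrator $\hat\mu_{\ora,i} = \pi_i^*(\tau^*-\hat\tau) + r_i^* - \r_i$, same structure of establishing feasibility (as in Lemma~\ref{lem:feasible}, via the conditional-mean/concentration argument), then invoking minimality and bounding $\|\hat{\mbb\mu}_{\ora}\|_2$ exactly as in Lemma~\ref{lem:mu}, splitting into the $n(\tau^*-\hat\tau)^2$ piece and the $\sum_i(X_i^\top(\hat\beta-\beta^*))^2$ piece. You simply spell out details the paper leaves as ``analogous analysis,'' and the argument is correct.
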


\begin{proof}
	Following the same analysis as Lemma~\ref{lem:feasible}, we have that by choosing $\eta_r \asymp \sqrt{\frac{\log p}{n}}, M_r \asymp 1$ large enough,
	\[
	\hat\mu_{\ora, i} \coloneqq \pi_i^* (\tau^* - \hat{\tau}) + r_i^* - \r_i
	\]
	is, with probability converging to $1$, a feasible solution to the optimization program, so that the desired result follows from an analysis analogous to Lemma~\ref{lem:mu}.
\end{proof}

\subsubsection{Proof of Theorem~\ref{thm:coef} (ii)}

We start by presenting the following lemmas.

\begin{lemma}\label{lem:coefgammaest}
	Under the conditions of Theorem~\ref{thm:coef} (ii), the same conclusion in Lemma~\ref{lem:gammaest} still holds.
\end{lemma}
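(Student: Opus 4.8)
The plan is to replay the proof of Lemma~\ref{lem:gammaest} line by line, tracking the two ways the calibration program for the regression coefficient differs from the ATE one: (a) the residual $T_i/\pi_i - 1$ is everywhere replaced by $T_i - \pi_i$, so that evaluated at $\pi^\ast$ it becomes $e_i \coloneqq T_i - \pi_i^\ast$, which by model~\eqref{plm} satisfies $\E[e_i \mid X_i] = 0$ and, by Condition~\ref{cd:regcoef}, is bounded with $\E[e^2]$ bounded away from $0$ (indeed $|e|\ge m$ forces $\E[e^2]\ge m^2$); and (b) the trimming constraint~\eqref{eq:const3} is replaced by the three parts of~\eqref{eq:const4}.

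First I would re-establish the $\ell_1$ bound. As in the proof of Theorem~\ref{thm:coef}(i) and following Lemma~\ref{lem:program}, I claim $\gamma^\ast$ (which under Condition~\ref{cd:prop} satisfies $\pi_i^\ast = \phi(X_i^\top\gamma^\ast)$) is feasible with probability $\to 1$: conditionally on $\{\bX, \cD_\aux, \cD_\tr\}$ the $e_i$ are independent, mean-zero and bounded, so Hoeffding's inequality makes the modified versions of~\eqref{eq:const1}--\eqref{eq:constprop} --- and the third part of~\eqref{eq:const4}, here using that $\pi_i^\ast$ is $\bX$-measurable --- hold once the tuning parameters are large enough; $\max_i|\pi_i^\ast|\le M_\pi$ is Condition~\ref{cd:regcoef}; and $\frac1n\sum_i e_i^2 \ge M_\pi^{-1}$ follows from the law of large numbers and $\E[e^2]\ge m^2$, provided $M_\pi$ is chosen large enough (compatible with its simultaneous role as an upper bound, since $\E[e^2]$ is also bounded above). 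Feasibility then gives $\|\tilde\gamma - \hat\gamma\|_1 \lesssim \|\gamma^\ast - \hat\gamma\|_1 \lesssim s_\pi\sqrt{\log p/n}$ and $\|\tilde\gamma - \gamma^\ast\|_1 \lesssim s_\pi\sqrt{\log p/n}$ via Condition~\ref{cd:prop}, i.e.\ the first display of Lemma~\ref{lem:gammaest}.

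For the prediction-error bound I would proceed exactly as in Lemma~\ref{lem:gammaest}: from the modified~\eqref{eq:constprop}, which now reads $\|\frac1n\sum_i(T_i - \tpi_i)\tilde X_i\|_\infty \lesssim \sqrt{(\log p\vee n)/n}$, subtract the analogous Hoeffding bound with $T_i - \tpi_i$ replaced by $e_i$, then apply \Holder{}'s inequality against the $\ell_1$ bound to get $|\frac1n\sum_i(\pi_i^\ast - \tpi_i)X_i^\top(\tilde\gamma - \gamma^\ast)| \lesssim s_\pi\sqrt{\log p\,(\log p\vee n)}/n$. A mean-value expansion of $\phi$ rewrites the left side, up to sign, as the weighted quadratic form $\frac1n\sum_i\phi'(\xi_i)\big(X_i^\top(\tilde\gamma - \gamma^\ast)\big)^2$ with weights bounded below by $c_{\phi,u}>0$ (Condition~\ref{cd:dphi}, applicable since $|X_i^\top\gamma^\ast|$ and $|X_i^\top\tilde\gamma|$ are $O(1)$ by Condition~\ref{cd:regcoef} and $\|\tilde\gamma - \gamma^\ast\|_1 = o(1)$), whence $\frac1n\sum_i\big(X_i^\top(\tilde\gamma - \gamma^\ast)\big)^2 \lesssim s_\pi\sqrt{\log p\,(\log p\vee n)}/n$; combining with $\frac1n\sum_i\big(X_i^\top(\hat\gamma - \gamma^\ast)\big)^2 \lesssim s_\pi\log p/n$ (a standard sub-Gaussian-design bound under Conditions~\ref{cd:subg} and~\ref{cd:prop}, cf.\ \eqref{eq:hatbern}) by the triangle inequality, and passing to the augmented covariates via the Hoeffding control of $\|\tilde\bX^\top\tilde\bX/n - \E[\tilde\bX^\top\tilde\bX/n]\|_\infty$ exactly as at the end of Lemma~\ref{lem:gammaest}, gives the second display. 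Because $T_i - \pi_i$ carries no factor $T_i/\pi_i$, the inverse-propensity reweighting steps and the use of the trimming bound $\max_i T_i/\pi_i\le M_\pi$ from the proof of Lemma~\ref{lem:gammaest} simply disappear here; only $\max_i|\pi_i|\le M_\pi$ is used, to keep the linear indices bounded.

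The only genuinely new point --- and hence the main, albeit minor, obstacle --- is checking feasibility of $\gamma^\ast$ under~\eqref{eq:const4} rather than~\eqref{eq:const3}, specifically the lower bound $\frac1n\sum_i(T_i - \pi_i^\ast)^2 \ge M_\pi^{-1}$: this is where the lower bound on $|e|$ in Condition~\ref{cd:regcoef} is essential, and one must verify $M_\pi$ can be taken large enough simultaneously with the upper bound $\max_i|\pi_i^\ast|\le M_\pi$. Everything else is a transcription of the proof of Lemma~\ref{lem:gammaest} with $T_i/\pi_i - 1 \rightsquigarrow T_i - \pi_i$.
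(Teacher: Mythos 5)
Your proposal is correct and follows essentially the same route as the paper, whose proof of this lemma is literally a one-line deferral to "the same analysis as in Lemma~\ref{lem:gammaest}"; your line-by-line replay with $T_i/\pi_i - 1$ replaced by $T_i - \pi_i$ and constraint~\eqref{eq:const3} replaced by~\eqref{eq:const4} is exactly what that deferral entails. Your explicit feasibility check of $\gamma^\ast$ under~\eqref{eq:const4} (in particular the lower bound $\frac1n\sum_i e_i^2 \ge M_\pi^{-1}$ via Condition~\ref{cd:regcoef}) supplies a detail the paper leaves implicit, and is consistent with its argument.
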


\begin{proof}
	This follows from the same analysis as in Lemma~\ref{lem:gammaest}.
\end{proof}

\begin{lemma}\label{lem:sigmacoef}
	Under the conditions of Theorem~\ref{thm:coef} (ii), we have that
	\[
	\left|\tilde{\sigma}_e^2 - \bar{\sigma}_e^2\right| = o_\pr(1 / \sqrt{n}),
	\]
	where $\bar{\sigma}_e^2 \coloneqq n^{-1} \sum_{i = 1}^n e_i^2$.
\end{lemma}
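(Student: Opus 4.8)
The plan is to start from the partially-linear model equation $T_i = \pi^*(X_i) + e_i$ in~\eqref{plm}, so that $(T_i - \tpi_i)^2 = e_i^2 + 2 e_i (\pi_i^* - \tpi_i) + (\pi_i^* - \tpi_i)^2$ and hence the exact identity
\[
\tilde{\sigma}_e^2 - \bar{\sigma}_e^2 = \frac{2}{n}\sum_{i=1}^n e_i (\pi_i^* - \tpi_i) + \frac{1}{n}\sum_{i=1}^n (\pi_i^* - \tpi_i)^2 .
\]
The second (purely quadratic) term is routine: by the mean value theorem and uniform boundedness of $\phi'$, $|\pi_i^* - \tpi_i| \lesssim |\tilde{X}_i^\top (\tilde{\gamma} - \tilde{\gamma}^*)|$, and writing $\tilde{\gamma} - \tilde{\gamma}^* = (\tilde{\gamma} - \hat{\gamma}) + (\hat{\gamma} - \gamma^*)$ (with $\hat{\gamma}, \gamma^*$ zero-padded on the synthetic coordinates) its empirical second moment is bounded, via Lemma~\ref{lem:coefgammaest} (which ports Lemma~\ref{lem:gammaest} to this setting) together with a Bernstein bound on $n^{-1}\sum_i (X_i^\top (\hat{\gamma} - \gamma^*))^2$, by $O_\pr(s_\pi \sqrt{\log p \log p \vee n}/n)$; this is $o_\pr(1/\sqrt{n})$ under the same sparsity scaling $s_\pi = o(\sqrt{n}/\sqrt{\log p \log p \vee n})$ as in Theorem~\ref{thm:prop}.

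The substantive work is the cross term $\tfrac{2}{n}\sum_i e_i(\pi_i^* - \tpi_i)$, and the first move is to split $\pi_i^* - \tpi_i = (\pi_i^* - \hpi_i) + (\hpi_i - \tpi_i)$ with $\hpi_i = \phi(X_i^\top \hat{\gamma})$ (the trimming being moot, as in the analogue of Lemma~\ref{lem:pitrim}). For the first piece, $\hat{\gamma}$ is computed on $\cD_\tr$ and is independent of the main sample, so conditionally on $(\bX, \cD_\tr)$ the $e_i$ are independent, mean zero and bounded; a sub-Gaussian tail bound then gives $|n^{-1}\sum_i e_i(\pi_i^* - \hpi_i)| \lesssim \sqrt{\log p / n}\,(n^{-1}\sum_i(\pi_i^* - \hpi_i)^2)^{1/2}$, and the second factor is $O_\pr(\sqrt{s_\pi \log p/n})$ by Bernstein and Condition~\ref{cd:prop}, whence this piece is $O_\pr(\sqrt{s_\pi}\log p / n) = o_\pr(1/\sqrt{n})$. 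The second piece $n^{-1}\sum_i e_i(\hpi_i - \tpi_i)$ is where the main obstacle lies: $\tpi$ depends on the main-sample treatments $\mathbf{T}$, since the modification of~\eqref{eq:program} used here involves residuals $T_i - \pi_i$, so one cannot condition $e_i$ to zero against $\hpi_i - \tpi_i$. The device I would use is to rewrite $e_i = (T_i - \tpi_i) - (\pi_i^* - \tpi_i)$, turning the piece into $n^{-1}\sum_i (T_i - \tpi_i)(\hpi_i - \tpi_i) - n^{-1}\sum_i (\pi_i^* - \tpi_i)(\hpi_i - \tpi_i)$; the last sum is handled by Cauchy--Schwarz using the $n^{-1}\sum(\cdot)^2$ bounds already in hand.

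For $n^{-1}\sum_i (T_i - \tpi_i)(\hpi_i - \tpi_i)$ I would Taylor-expand $\hpi_i - \tpi_i = \phi'(X_i^\top\hat{\gamma})\,\tilde{X}_i^\top(\hat{\gamma} - \tilde{\gamma}) + O((\tilde{X}_i^\top(\hat{\gamma}-\tilde{\gamma}))^2)$, absorb the quadratic remainder into the same bound as above, and bound the linear part via \Holder{} by $\|n^{-1}\sum_i (T_i - \tpi_i)\phi'(X_i^\top\hat{\gamma})\tilde{X}_i\|_\infty \cdot \|\hat{\gamma} - \tilde{\gamma}\|_1$, with $\|\hat{\gamma} - \tilde{\gamma}\|_1 = O_\pr(s_\pi\sqrt{\log p/n})$ from Lemma~\ref{lem:coefgammaest}. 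The $\ell_\infty$ factor is then controlled by one last split $T_i - \tpi_i = e_i + (\pi_i^* - \tpi_i)$: the $e_i$-part is a Hoeffding union bound over the $p \vee n$ coordinates (the weights $\phi'(X_i^\top\hat{\gamma})\tilde{X}_{i,j}$ being measurable with respect to the conditioning $\sigma$-field and bounded), yielding $O_\pr(\sqrt{(\log p \vee n)/n})$, while the $(\pi_i^* - \tpi_i)$-part is at most $n^{-1}\sum_i|\pi_i^* - \tpi_i| \le (n^{-1}\sum_i (\pi_i^* - \tpi_i)^2)^{1/2}$; multiplying through by the $\ell_1$ bound collapses everything to $o_\pr(1/\sqrt{n})$ under $s_\pi = o(\sqrt{n}/\sqrt{\log p \log p \vee n})$. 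I expect the only genuine difficulty to be the $e_i$-versus-$(\hpi_i-\tpi_i)$ correlation just described; once it is removed by the $e_i = (T_i - \tpi_i) - (\pi_i^* - \tpi_i)$ rewriting, all the remaining estimates are \Holder{}/Cauchy--Schwarz/Bernstein bookkeeping that runs exactly parallel to the proof of Theorem~\ref{thm:prop}.
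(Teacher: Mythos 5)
Your starting identity $\tilde{\sigma}_e^2 - \bar{\sigma}_e^2 = n^{-1}\sum_i(\pi_i^*-\tpi_i)^2 + 2n^{-1}\sum_i e_i(\pi_i^*-\tpi_i)$ and your treatment of the quadratic term are exactly the paper's. The divergence, and the gap, is in the cross term. The dependence of $\tpi$ on $\mathbf{T}$ that you flag is in fact not an obstacle in the paper's route: since under Condition~\ref{cd:prop} one has $\pi_i^*=\phi(X_i^\top\gamma^*)$, the paper Taylor-expands $\pi_i^*-\tpi_i$ around $\gamma^*$ and applies \Holder{}, so the empirical-process factor is $\left\|n^{-1}\sum_i e_i\,\phi'(X_i^\top\gamma^*)\tilde X_i\right\|_\infty$, which does not involve $\tilde\gamma$ at all; the $e_i$ are bounded, mean zero and independent of $(\bX,\tilde{\bX},\cD_{\aux},\cD_\tr)$, so Hoeffding with a union bound gives $O_\pr\bigl(\sqrt{(\log p \vee n)/n}\bigr)$, and multiplying by $\|\tilde\gamma-\gamma^*\|_1=O_\pr(s_\pi\sqrt{\log p/n})$ from Lemma~\ref{lem:coefgammaest} (the quadratic Taylor remainder being absorbed as in your argument) finishes the cross term. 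All of the randomness of $\tilde\gamma$ is pushed into the $\ell_1$ factor, which is where the dependence issue disappears.

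Your workaround (rewriting $e_i=(T_i-\tpi_i)-(\pi_i^*-\tpi_i)$ and then splitting the $\ell_\infty$ factor as $T_i-\tpi_i=e_i+(\pi_i^*-\tpi_i)$) does not close under the stated sparsity. The problematic piece is $\left\|n^{-1}\sum_i(\pi_i^*-\tpi_i)\phi'(X_i^\top\hat\gamma)\tilde X_i\right\|_\infty\cdot\|\hat\gamma-\tilde\gamma\|_1$: your bound for the first factor, $\bigl(n^{-1}\sum_i(\pi_i^*-\tpi_i)^2\bigr)^{1/2}=O_\pr\bigl((s_\pi\sqrt{\log p\,\log p\vee n}/n)^{1/2}\bigr)$, combined with $\|\hat\gamma-\tilde\gamma\|_1=O_\pr(s_\pi\sqrt{\log p/n})$, yields a product of order $s_\pi^{3/2}(\log p)^{1/2}(\log p\,\log p\vee n)^{1/4}/n$, which is $o_\pr(1/\sqrt n)$ only when $s_\pi=o(n^{1/3})$ up to logarithmic factors --- strictly stronger than the assumed regime $s_\pi=o\bigl(\sqrt n/\sqrt{\log p\,\log p\vee n}\bigr)$ (for instance, with $p\asymp n^2$ and $s_\pi\asymp n^{0.45}$ the assumption is satisfied but your chain of bounds is not $o_\pr(1/\sqrt n)$). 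Note also that the modified constraint~\eqref{eq:constprop} cannot be invoked directly to rescue this term because of the extra weights $\phi'(X_i^\top\hat\gamma)$. So the concluding claim that "multiplying through by the $\ell_1$ bound collapses everything" is where the proof breaks; the repair is precisely the paper's move of centering the Taylor expansion at $\gamma^*$, which never produces a $(\pi_i^*-\tpi_i)$-weighted $\ell_\infty$ term in the first place.
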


\begin{proof}
	We have the decomposition
	\[
	\tilde{\sigma}_e^2 - \frac{1}{n} \sum_{i=1}^n e_i^2 = \frac{1}{n} \sum_{i=1}^n (\pi_i^* - \tpi_i)^2 + \frac{2}{n} \sum_{i=1}^n e_i (\pi_i^* - \tpi_i).
	\]
	
	For the first term, using mean value theorem and that $\phi'(\cdot)$ is uniformly bounded, we have that
	\[
	\frac{1}{n} \sum_{i=1}^n (\pi_i^* - \tpi_i)^2 \lesssim \frac{1}{n} \sum_{i=1}^n (\tilde{X}_i^\top(\tilde{\gamma} - \gamma^*))^2.
	\]
	Then with Lemma~\ref{lem:coefgammaest}, it follows from a similar analysis to that in Lemma \ref{lem:gammaest} that the first term is $o_\pr(1 / \sqrt{n})$.
%

We now turn to the second term. Using exactly the same argument as the control of term $\mathrm{I}$ in the proof of Theorem~\ref{thm:prop}, it remains to prove that with probability converging to $1$,
\[
\left\|\frac{1}{n} \sum_{i = 1}^n e_i \phi'(X_i^\top \gamma^*) \tilde{X}_i \right\|_\infty \lesssim \sqrt{\frac{\log p \vee n}{n}}.
\]
Now that all the entries of $\phi'(X_i^\top \gamma^*) \tilde{X}_i$ are bounded and that $e_i$'s are i.i.d. bounded random variables which are independent from $\tilde{\bX}$, the desired result follows from an application of Hoeffding's inequality.
\end{proof}

\begin{proof}[Proof of Theorem~\ref{thm:coef} (ii)]
	Let $\hat{\tau}^*_\est$ be a modification of $\hat{\tau}_\est$ but with $\tilde{\pi}$ replaced by $\pi^*$. Then we have the decomposition
	\begin{equation}\label{eq:coefdecomp}
		\hat{\tau}_\est - \hat{\tau}^*_\est = \frac{1}{n} \sum_{i = 1}^n \frac{(Y_i - T_i \hat{\tau} - \r_i - \hat{\mu}_i) (\pi_i^* - \tpi_i)}{\hat{\sigma}_e^2} + \left(\frac{1}{\hat{\sigma}_e^2} - \frac{1}{\bar{\sigma}_e^2}\right) \cdot \frac{1}{n} \sum_{i = 1}^n (Y_i - T_i \hat{\tau} - \r_i - \hat{\mu}_i) (T_i - \pi_i^*).
	\end{equation}
	We first consider the second term, using that with probability converging to $1$,
	\[
	\max_{i} \left|(Y_i - T_i \hat{\tau} - \r_i - \hat{\mu}_i) (T_i - \pi_i^*)\right| \lesssim 1,
	\]
	we only need to prove that
	\[
	\frac{|\hat{\sigma}_e^2 - \bar{\sigma}_e^2|}{\hat{\sigma}_e^2 \bar{\sigma}_e^2} = o_\pr(1 / \sqrt{n}),
	\]
	which is a direct consequence of Lemma~\ref{lem:sigmacoef}.
	
	For the first term, recall we already have the constraint~\eqref{eq:const4}, it remains to show that
 \[
 \mathrm{I} \coloneqq \left|\frac{1}{\sqrt{n}} \sum_{i = 1}^n (Y_i - T_i \hat{\tau} - \r_i - \hat{\mu}_i) (\pi^*_i - \tilde{\pi}_i)\right| = o_\pr(1).
 \]
 With Lemma~\ref{lem:coefgammaest}, this directly follows from the exactly the same analysis as the terms $\mathrm{I}$ and $\mathrm{II}$ in the proof of Theorem~\ref{thm:prop}.

In light of our control of two terms in the decomposition~\eqref{eq:coefdecomp}, we have that 
\[
\sqrt{n}(\hat{\tau}_\est - \tau^\ast) = \sqrt{n}(\hat{\tau}_\est^* - \tau^\ast) + o_\pr(1).
\]

With the above preparation, the desired result is a direct consequence of the following lemma.
\end{proof}

\begin{lemma}
	Under the conditions of Theorem~\ref{thm:coef} (ii), we have the following asymptotic representation
	\[
	\sqrt{n}(\hat{\tau}^*_\est - \tau^*) = \frac{1}{\sqrt{n}} \sum_{i = 1}^n \frac{e_i \varepsilon_i}{\sigma_e^2} + \frac{1}{\sqrt{n}} \sum_{i = 1}^n \frac{e_i((\tau^* - \hat{\tau}) \pi_i^* + r_i^* - \r_i - \hat{\mu}_i)}{\sigma_e^2} + o_\pr(1)
	\]
\end{lemma}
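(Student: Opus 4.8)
The plan is to reduce the statement to an exact algebraic identity followed by one routine denominator-swap, invoking no probabilistic tool beyond those already used in the proofs of Theorems~\ref{thm:prop} and~\ref{thm:coef}. Recall that $\hat{\tau}^*_\est$ is obtained from $\hat{\tau}_\est$ by replacing $\tilde{\pi}$ everywhere by $\pi^*$, so that
\[
\hat{\tau}^*_\est = \hat{\tau} + \frac{1}{n}\sum_{i=1}^n \frac{(T_i - \pi_i^*)(Y_i - T_i \hat{\tau} - \r_i - \hat{\mu}_i)}{\bar{\sigma}_e^2}, \qquad \bar{\sigma}_e^2 = \frac{1}{n}\sum_{i=1}^n (T_i - \pi_i^*)^2 = \frac{1}{n}\sum_{i=1}^n e_i^2,
\]
where we used $T_i - \pi_i^* = e_i$ from~\eqref{plm}.

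First I would establish the key exact identity. Substituting $Y_i = T_i \tau^* + r_i^* + \varepsilon_i$ and $T_i = \pi_i^* + e_i$ into the numerator gives, with no remainder,
\[
Y_i - T_i \hat{\tau} - \r_i - \hat{\mu}_i = \big[(\tau^* - \hat{\tau})\pi_i^* + r_i^* - \r_i - \hat{\mu}_i\big] + (\tau^* - \hat{\tau})\,e_i + \varepsilon_i .
\]
Multiplying by $e_i = T_i - \pi_i^*$, averaging over $i$, and dividing by $\bar{\sigma}_e^2$ splits the correction term of $\hat{\tau}^*_\est$ into three pieces; the middle one equals $(\tau^* - \hat{\tau})\cdot\big(n^{-1}\sum_i e_i^2\big)/\bar{\sigma}_e^2 = \tau^* - \hat{\tau}$, which cancels exactly against the leading $\hat{\tau} - \tau^*$ in $\hat{\tau}^*_\est - \tau^*$. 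Hence, with no error term whatsoever,
\[
\sqrt{n}\,(\hat{\tau}^*_\est - \tau^*) = \frac{1}{\sqrt{n}}\sum_{i=1}^n \frac{e_i \varepsilon_i}{\bar{\sigma}_e^2} + \frac{1}{\sqrt{n}}\sum_{i=1}^n \frac{e_i\big[(\tau^* - \hat{\tau})\pi_i^* + r_i^* - \r_i - \hat{\mu}_i\big]}{\bar{\sigma}_e^2}.
\]
It is essential here to keep the data-dependent $\bar{\sigma}_e^2$ (rather than $\sigma_e^2$) in the denominator until this cancellation is carried out.

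It then remains to replace $\bar{\sigma}_e^2$ by the fixed positive constant $\sigma_e^2$ in both denominators. Since $\{e_i^2\}$ are i.i.d. and uniformly bounded (Condition~\ref{cd:regcoef}) with mean $\sigma_e^2$, Chebyshev's (or Hoeffding's) inequality gives $\bar{\sigma}_e^2 - \sigma_e^2 = O_\pr(n^{-1/2}) = o_\pr(1)$ and $\bar{\sigma}_e^2$ bounded away from $0$ with probability tending to $1$, whence $\bar{\sigma}_e^{-2} - \sigma_e^{-2} = o_\pr(1)$. For the first sum the swap incurs an error $(\bar{\sigma}_e^{-2} - \sigma_e^{-2})\cdot n^{-1/2}\sum_i e_i\varepsilon_i$; the second factor is $O_\pr(1)$ because $\{e_i\varepsilon_i\}$ are mean-zero with bounded second moment, so the error is $o_\pr(1)$. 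For the second sum I would condition on $\mathcal{F} \coloneqq \sigma(\bX,\cD_\aux,\cD_\tr)$: the coefficients $c_i \coloneqq (\tau^* - \hat{\tau})\pi_i^* + r_i^* - \r_i - \hat{\mu}_i$ are $\mathcal{F}$-measurable --- crucially, the calibrator $\hat{\bmu}$ solves a program that uses only $\bX$, $\cD_\aux$ and $\cD_\tr$, not the main-sample treatments and outcomes --- and are uniformly bounded with probability tending to $1$ (using $|\hat{\tau}| \le \hat{m}$, $|\r_i|\le\hat{m}$, the $\ell_\infty$-constraint $\|\hat{\bmu}\|_\infty \le M_r$, and the boundedness of $\pi^*$ and $r^*$), while $\{e_i\}$ are, conditionally on $\mathcal{F}$, mean-zero, bounded and uncorrelated across $i$; a conditional Chebyshev inequality --- the same device used to control the leading term in the proof of Theorem~\ref{thm:prop} --- then yields $n^{-1/2}\sum_i e_i c_i = O_\pr(1)$, so this swap too costs only $o_\pr(1)$. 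Combining the two displays gives the claim.

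I do not anticipate a serious obstacle: the content is an exact cancellation plus a standard denominator swap. The two points demanding care are (i) arranging the algebra with $\bar{\sigma}_e^2$ in the denominator so that the $\hat{\tau}$ terms cancel cleanly, and (ii) verifying the $\mathcal{F}$-measurability of $\hat{\bmu}$ (hence of the $c_i$'s) so that the conditional-moment bound on the bias sum is legitimate.
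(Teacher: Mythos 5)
Your proposal is correct and follows essentially the same route as the paper's own proof: the exact substitution of $Y_i = T_i\tau^* + r_i^* + \varepsilon_i$ and $T_i = \pi_i^* + e_i$ so that the $(\tau^*-\hat{\tau})e_i^2$ piece cancels the leading $\hat{\tau}-\tau^*$ against $\bar{\sigma}_e^2$, then an $O_\pr(1)$ bound on $n^{-1/2}\sum_i e_i(\varepsilon_i + c_i)$ using boundedness and the fact that $\hat{\bmu}$ (hence $c_i$) depends only on $\bX,\cD_\aux,\cD_\tr$, and finally the law-of-large-numbers swap $\bar{\sigma}_e^2 \to \sigma_e^2$. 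Your conditional-Chebyshev phrasing and the explicit measurability check for $\hat{\bmu}$ are just slightly more detailed versions of the paper's appeal to independence and "standard CLT results," so no substantive difference.
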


\begin{proof}
	Reorganizing $Y_i - T_i \hat{\tau} - \r_i = e_i (\tau^* - \hat{\tau}) + \pi_i^* (\tau^* - \hat{\tau}) + r_i^* - \r_i + \varepsilon_i$, we rewrite $\hat{\tau}^*_\est$ as
	\[
	\hat{\tau}^*_\est = \tau^* + \frac{1}{n} \sum_{i = 1}^n \frac{e_i((\tau^* - \hat{\tau}) \pi_i^* + r_i^* - \r_i - \hat{\mu}_i + \varepsilon_i)}{\bar\sigma_e^2}.
	\]
	Using that the $e_i$'s are independent from $(\tau^* - \hat{\tau}) \pi_i^* + r_i^* - \r_i - \hat{\mu}_i$ and that with probability converging to $1$, all the $(\tau^* - \hat{\tau}) \pi_i^* + r_i^* - \r_i - \hat{\mu}_i + \varepsilon_i$'s are bounded, we have from standard results in central limit theorem that 
	\[
	\frac{1}{\sqrt{n}} \sum_{i = 1}^n e_i((\tau^* - \hat{\tau}) \pi_i^* + r_i^* - \r_i - \hat{\mu}_i) = O_\pr(1).
	\]
	Now using the law of large numbers, we have that $\bar\sigma_e^2 - \sigma_e^2 = o_\pr(1)$. Putting together the above analysis yields the desired result.
\end{proof}

\subsection{Conditions related to Section \ref{sec:iv}}
\label{app:iv}

In this section, we lay out the standard identification conditions for $\chi^{\ast}$ defined in \eqref{late} to identify LATE, i.e. $\chi^{\ast} \equiv \mathbb{E} [Y (1) - Y (0) | T (1) > T (0)]$. This is now a classical result in the causal inference literature, and hence we will not prove it or discuss it further.

\begin{condition}[IV identification conditions]\leavevmode
\label{cond:iv1}
\begin{enumerate}[label = (\roman*)]
\item IV unconfoundedness: $Y (z, t), T (z) \independent Z | X$ almost surely, for all $t, z \in \{0, 1\}$;
\item Exclusion restriction: $Y (1, t) = Y (0, t) = Y (t)$ almost surely, for all $t \in \{0, 1\}$;
\item Monotonicity: $T (1) \geq T (0)$ almost surely;
\item Relevance: $\pr (T (1) = 1) \neq \pr (T (0) = 1)$;
\item Consistency: $Y = T Y (1) + (1 - T) Y (0)$ and $T = Z T (1) + (1 - Z) T (0)$;
\item IV Positivity: There exists an absolute constant $c_{\zeta} \in (0, 0.5)$ such that $\zeta^{\ast} (X) \in (c_\zeta, 1 - c_\zeta)$ almost surely.
\end{enumerate}
\end{condition}

\subsection{Proof of Corollary \ref{cor:as}}
\label{app:as}

Similar to Section \ref{sec:ate} and \ref{sec:atethm}, the proof of Corollary \ref{cor:as} can be divided into the following steps as in the proof of Theorem \ref{thm:reg} and \ref{thm:prop}. 

As a first step, we need to show the following.
\begin{lemma}
\label{lem:mu as}
The same conclusion of Lemma \ref{lem:mu} holds when Condition \ref{cd:reg} is replaced by Condition \ref{cd:reg as} and $s_r = o (\sqrt{n} / \log p)$ is replaced by $\xi_r > 1 / 2$.
\end{lemma}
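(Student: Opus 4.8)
The plan is to mirror the proof of Lemma~\ref{lem:mu} essentially line by line; the single new feature in the approximately sparse regime is a nonzero GLM approximation bias $r_i^\ast-\psi(X_i^\top\beta^\ast)$, and I will show it is negligible once $\xi_r>1/2$ and $p\asymp n^2$.

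First I would re-run the oracle-feasibility step. Setting $\hat\mu_{\ora,i}\coloneqq\pi_i^\ast(r_i^\ast-\hat r_i)/\hat\pi_i$ as before, I claim the analogue of Lemma~\ref{lem:feasible} still holds: with probability converging to one, $\hat\bmu_{\ora}$ is feasible for \eqref{eq:dipwprogram}--\eqref{eq:dipwinfinity}. The point is that the feasibility argument (via \citet[Theorem~19]{wang2020debiased}) only uses boundedness of $r^\ast$ and $\hat r$ (Conditions~\ref{cd:y} and~\ref{cd:est}), the trimming bound $c<\hat\pi_i<1-c$ for a constant $c$, and concentration of the empirical averages over $\cD$ and $\cD_\aux$ around a common population functional; the residual $r^\ast-\hat r$ enters this argument as a single block, so it is irrelevant whether $r^\ast$ is exactly or only approximately a GLM, and $|\hat\mu_{\ora,i}|\le c^{-1}(m_Y+m_r)$ handles~\eqref{eq:dipwinfinity}. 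Optimality then gives $\|\hat\bmu\|_2\le\|\hat\bmu_{\ora}\|_2$, and the trimming/overlap bound yields $\|\hat\bmu_{\ora}\|_2^2\lesssim\sum_{i=1}^n(r_i^\ast-\hat r_i)^2$.

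Next I would split $r_i^\ast-\hat r_i=(r_i^\ast-\psi(X_i^\top\beta^\ast))+(\psi(X_i^\top\beta^\ast)-\psi(X_i^\top\hat\beta))$ and, using $\psi'\le m_\psi$ (Condition~\ref{cd:reg as}), reduce to showing that $A\coloneqq\sum_i(r_i^\ast-\psi(X_i^\top\beta^\ast))^2$ and $B\coloneqq\sum_i(X_i^\top(\hat\beta-\beta^\ast))^2$ are both $o_\pr(\sqrt n)$. For $B$ this is the computation from Lemma~\ref{lem:mu}: since $\hat\beta$ is independent of $\cD$, Bernstein's inequality \citep[Lemma~2.7.7]{vershynin2018high} together with $\E Z=0$ and the sub-Gaussianity in Condition~\ref{cd:subg} gives $B\lesssim n\|\hat\beta-\beta^\ast\|_2^2$ w.h.p., and plugging in the $\ell_2$-rate of Condition~\ref{cd:reg as} yields
\[
B\lesssim n\Bigl(\tfrac{n}{\log p}\Bigr)^{-\frac{2\xi_r}{2\xi_r+1}}\asymp n^{\frac{1}{2\xi_r+1}}(\log p)^{\frac{2\xi_r}{2\xi_r+1}},
\]
which is $o(\sqrt n)$ precisely because $\xi_r>1/2$ forces $2\xi_r+1>2$ (the polylog factor is absorbed by the polynomial gap, as $\log p\asymp\log n$). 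For $A$, I would use that $\beta^\ast$ is the best $p$-dimensional coefficient, so Condition~\ref{cond:nuis as}(i) with $s=p$ gives $\E[(r^\ast(X)-\psi(\beta^{\ast\top}X))^2]\ll p^{-2\xi_r}$; since $|r_i^\ast|\le m_Y$ and, by Hölder, $\max_i|X_i^\top(\hat\beta-\beta^\ast)|\le\max_i\|X_i\|_\infty\|\hat\beta-\beta^\ast\|_1=o_\pr(1)$ (using Conditions~\ref{cd:subg} and~\ref{cd:reg as}, the $\ell_1$-rate being $o(1)$ once $\xi_r>1/2$), we get $|\psi(X_i^\top\beta^\ast)|\le|\hat r_i|+o_\pr(1)=O_\pr(1)$, so the summands of $A$ are uniformly bounded w.h.p.; Markov's inequality then gives $A=O_\pr(n\,p^{-2\xi_r})=O_\pr(n^{1-4\xi_r})=o_\pr(1)$ since $p\asymp n^2$. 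Combining the three displays, $\|\hat\bmu\|_2^2\le\|\hat\bmu_{\ora}\|_2^2\lesssim A+B=o_\pr(\sqrt n)$, i.e.\ $\|\hat\bmu\|_2=o_\pr(n^{1/4})$.

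The only genuinely new points — and the ones I would check most carefully — are (i) that the oracle-feasibility argument of Lemma~\ref{lem:feasible} survives a nonvanishing approximation bias (it does, because that bias never needs to be decomposed there), and (ii) the rate bookkeeping in the bound for $B$: one must verify that the ``effective-sparsity'' $\ell_2$-rate of Condition~\ref{cd:reg as}, once inflated by the factor $n$, is still $o(\sqrt n)$, which holds exactly at the threshold $\xi_r=1/2$ appearing in~\eqref{minimal as}. Everything else is a transcription of the exactly-sparse argument.
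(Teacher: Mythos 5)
Your proposal is correct and follows essentially the same route as the paper's proof: establish that the oracle calibrator $\hat{\bmu}_{\ora}$ remains feasible, bound $\|\hat{\bmu}\|_2$ by the residual $\ell_2$-norm, and split that residual into the estimation part (Bernstein plus the effective-sparsity $\ell_2$-rate of Condition~\ref{cd:reg as}, which is $o_\pr(\sqrt{n})$ exactly when $\xi_r>1/2$) and the GLM approximation bias $\sum_i(r_i^\ast-\psi(X_i^\top\beta^\ast))^2$, made negligible by Condition~\ref{cond:nuis as}(i) with $p\gtrsim n^2$. The only cosmetic difference is that you dispatch the approximation-bias sum with Markov's inequality where the paper invokes Bernstein's inequality; both suffice.
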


\begin{proof}
The proof follows a similar argument to the proof of Lemma \ref{lem:mu} in Appendix \ref{app:mu}, except that \eqref{exact unbiased} needs to be replaced by the following chain of inequalities:
\begin{align*}
\Vert \hat{\bmu} \Vert_{2} & \leq \Vert \hat{\bmu}_{\ora} \Vert_{2} \lesssim \sqrt{\sum_{i = 1}^{n} (\hat{r}_i - r_i^\ast)^{2}} \lesssim \sqrt{\sum_{i = 1}^{n} (\bar{r}_i^\ast - r_i^\ast)^{2}} + \sqrt{\sum_{i = 1}^{n} (X_i^\top (\hat{\beta} - \beta^\ast))^2}.
\end{align*}
Then the rest follows from applying the same argument as in Appendix \ref{app:mu}, together with the assumption \eqref{tail or} in Condition \ref{cond:nuis as}.
\end{proof}

We now prove Corollary \ref{cor:as} (i). 
\begin{proof}
The proof closely mirrors the proof for Theorem \ref{thm:reg} given in Appendix \ref{app:reg}, except that we need to modify the decomposition in \eqref{reg expansion} with an extra term $\mathrm{IV}$:
\begin{align*}
\hat{\tau}_\est & - \bar{\tau}^\ast \\
= & \ \underset{\mathrm{I}}{\underbrace{\frac{1}{n} \sum_{i=1}^n \left( 1 - \frac{T_i}{\tpi_i} \right) \psi'(X_i^\top \hat{\beta}) X_i^\top (\hat{\beta} - \beta^*) }} \\
& + \underset{\mathrm{II}}{\underbrace{\frac{1}{n} \sum_{i=1}^n \left( 1 - \frac{T_i}{\tpi_i} \right)  \psi''(\eta_i X_i^\top \hat{\beta} + (1 - \eta_i) X_i^\top \beta^*) \left(X_i^\top (\hat{\beta} - \beta^*)\right)^2 }} \\
& + \underset{\mathrm{III}}{\underbrace{\frac{1}{n} \sum_{i=1}^n \left( 1 - \frac{T_i}{\tpi_i} \right) \hmu_i}} + \underbrace{\frac{1}{n} \sum_{i = 1}^{n} \left( 1 - \frac{T_i}{\tpi_i} \right) \left( \bar{r}^{\ast}_{i} - r^{\ast}_{i} \right)}_{\mathrm{IV}} + \frac{1}{n} \sum_{i=1}^n \frac{T_i \varepsilon_i(1)}{\tpi_i}.
\end{align*}
Using the same argument as in Appendix \ref{app:reg}, with Condition \ref{cd:reg} replaced by Condition \ref{cd:reg as}, we have, since $\xi_{r} > 1 / 2$,
\begin{align*}
|\mathrm{I}| = O_{\mathbb{P}} \left( \left( \sqrt{\frac{n}{\log p}} \right)^{- \frac{4 \xi_{r}}{2 \xi_{r} + 1}} \right) = o_{\pr} (n^{-1 / 2}), |\mathrm{II}| \lesssim \Vert \hat{\beta} - \beta^{\ast} \Vert^{2} = O_{\mathbb{P}} \left( \left( \sqrt{\frac{n}{\log p}} \right)^{- \frac{4 \xi_{r}}{2 \xi_{r} + 1}} \right) = o_{\pr} (n^{-1 / 2}).
\end{align*}
Following Lemma \ref{lem:mu as}, we also have $|\mathrm{III}| = o_{\pr} (n^{-1 / 2})$. Finally, we can easily show that $|\mathrm{IV}| = o_{\pr} (n^{-1 / 2})$ by invoking \eqref{tail or} using Cauchy-Schwarz inequality.
\end{proof}

Finally, we are left to prove Corollary \ref{cor:as} (ii). To this end, we first need to show that $\gamma^{\ast}$ is a feasible solution to \eqref{eq:program}, or equivalently $\bar{\pi}^{\ast}_i$'s satisfy all the constraints in \eqref{eq:program}, with probability converging to 1.

\begin{proof}
First, we notice as in the proof of Lemma \ref{lem:program}, the true propensity score $\pi^{\ast}_{i}$'s satisfy all the constraints in \eqref{eq:program} with probability converging to 1. Hence, after applying triangle inequality and Cauchy-Schwarz inequality, we are only left to show the following holds with probability approaching 1:
\begin{align*}
\left\{ \frac{1}{n} \sum_{i = 1}^{n} \left( \frac{T_{i}}{\pi^{\ast}_{i}} - \frac{T_{i}}{\bar{\pi}^{\ast}_{i}} \right)^{2} \right\}^{1 / 2} \ll \sqrt{\frac{\log p}{n}},
\end{align*}
which is true due to \eqref{tail ps} imposed in Condition \ref{cd:prop as}.
\end{proof}

Next, we establish the following lemma analogous to Lemma \ref{lem:gammaest}.
\begin{lemma}
\label{lem:gammaest as}
Under the Conditions of Lemma \ref{lem:gammaest}, except with Condition \ref{cd:prop} replaced by Condition \ref{cd:prop as} and $s_\pi = o (\sqrt{n} / \log p)$ replaced by $\xi_\pi > 1 / 2$, by choosing $\eta_{r}, \eta_{\pi_{1}}, \eta_{\pi_{2}} \asymp \sqrt{\frac{\log p}{n}}$, $M_r, M_\pi, M_{\gamma} \asymp 1$, we have that with probability converging to 1,
\begin{align*}
\Vert \tilde{\gamma} - \hat{\gamma} \Vert_{1} = O \left( \left( \sqrt{\frac{n}{\log p}} \right)^{- \frac{2 \xi_{\pi} - 1}{2 \xi_{\pi} + 1}} \right) \,\,\, \& \,\,\, \frac{1}{n} \sum_{i = 1}^{n} (X_i^{\top} (\tilde{\gamma} - \hat{\gamma}))^{2} = O \left( \left( \sqrt{\frac{n}{\log p}} \right)^{- \frac{4 \xi_{\pi}}{2 \xi_{\pi} + 1}} \right).
\end{align*}
\end{lemma}

\begin{proof}
The proof closely follows the argument in Appendix \ref{app:gammaest}. The first statement follows exactly the same argument. In terms of the second statement, we first notice that with probability approaching 1,
\begin{align*}
\left\Vert \frac{1}{n} \sum_{i = 1}^{n} \left( \frac{T_{i}}{\tilde{\pi}_{i}} - \frac{T_{i}}{\bar{\pi}_{i}^{\ast}} \right) X_{i}^{\top} \right\Vert_{\infty} \lesssim \sqrt{\frac{\log p}{n}},
\end{align*}
which in turn implies that with probability approaching 1
\begin{align*}
& \ \left\vert \frac{1}{n} \sum_{i = 1}^{n} \left( \frac{T_{i}}{\tilde{\pi}_{i}} - \frac{T_{i}}{\bar{\pi}_{i}^{\ast}} \right) X_{i}^{\top} (\tilde{\gamma} - \gamma^{\ast}) \right\vert \\
\leq & \ \left\Vert \frac{1}{n} \sum_{i = 1}^{n} \left( \frac{T_{i}}{\tilde{\pi}_{i}} - \frac{T_{i}}{\bar{\pi}_{i}^{\ast}} \right) X_{i}^{\top} \right\Vert_{\infty} \Vert \tilde{\gamma} - \gamma^{\ast} \Vert_{1} \\
= & \ O \left( \left( \sqrt{\frac{n}{\log p}} \right)^{-1} \left( \sqrt{\frac{n}{\log p}} \right)^{- \frac{2 \xi_{\pi} - 1}{2 \xi_{\pi} + 1}} \right) \\
= & \ O \left( \left( \sqrt{\frac{n}{\log p}} \right)^{- \frac{4 \xi_{\pi}}{2 \xi_{\pi} + 1}} \right).
\end{align*}
Then following the same Taylor expansion argument as in the proof of Lemma \ref{lem:gammaest} in Appendix \ref{app:gammaest}, we obtain, with probability approaching 1,
\begin{align*}
\frac{1}{n} \sum_{i = 1}^{n} \frac{T_{i}}{\bar{\pi}^{\ast}_{i}} \left( X_{i}^{\top} (\tilde{\gamma} - \gamma^{\ast}) \right)^{2} = O \left( \left( \sqrt{\frac{n}{\log p}} \right)^{- \frac{4 \xi_{\pi}}{2 \xi_{\pi} + 1}} \right).
\end{align*}
Thus finally, again similar to the proof in Appendix \ref{app:gammaest},
\begin{align*}
\frac{1}{n} \sum_{i = 1}^{n} \left( X_{i}^{\top} (\tilde{\gamma} - \gamma^{\ast}) \right)^{2} & = \frac{1}{n} \sum_{i = 1}^{n} \frac{T_{i}}{\bar{\pi}^{\ast}_{i}} \left( X_{i}^{\top} (\tilde{\gamma} - \gamma^{\ast}) \right)^{2} + (\tilde{\gamma} - \gamma^{\ast})^{\top} \frac{1}{n} \sum_{i = 1}^{n} X_{i} X_{i}^{\top} \left( 1 - \frac{T_{i}}{\bar{\pi}^{\ast}_{i}} \right) (\tilde{\gamma} - \gamma^{\ast}) \\
& \lesssim \left( \sqrt{\frac{n}{\log p}} \right)^{- \frac{4 \xi_{\pi}}{2 \xi_{\pi} + 1}} + \left\Vert \frac{1}{n} \sum_{i = 1}^{n} X_{i} X_{i}^{\top} \left( 1 - \frac{T_{i}}{\bar{\pi}^{\ast}_{i}} \right) \right\Vert_{\infty} \Vert \tilde{\gamma} - \gamma^{\ast} \Vert_{1}^{2},
\end{align*}
with probability converging to 1. By the first statement of this lemma, $\Vert \tilde{\gamma} - \gamma^{\ast} \Vert_{1}^{2} \lesssim \left( \sqrt{\frac{n}{\log p}} \right)^{- \frac{4 \xi_{\pi} - 2}{2 \xi_{\pi} + 1}}$. By the same argument as in Appendix \ref{app:gammaest} and \eqref{tail ps} imposed in Condition \ref{cond:nuis as}, we have, with probability approaching 1,
\begin{align*}
& \ \left\Vert \frac{1}{n} \sum_{i = 1}^{n} X_{i} X_{i}^{\top} \left( 1 - \frac{T_{i}}{\bar{\pi}^{\ast}_{i}} \right) \right\Vert_{\infty} \\
\leq & \ \left\Vert \frac{1}{n} \sum_{i = 1}^{n} X_{i} X_{i}^{\top} \left( 1 - \frac{T_{i}}{\pi^{\ast}_{i}} \right) \right\Vert_{\infty} + \left\Vert \frac{1}{n} \sum_{i = 1}^{n} X_{i} X_{i}^{\top} \left( \frac{T_{i}}{\bar{\pi}^{\ast}_{i}} - \frac{T_{i}}{\pi^{\ast}_{i}} \right) \right\Vert_{\infty} \lesssim \sqrt{\frac{\log p}{n}}.
\end{align*}
Combining the above results, we have
\begin{align*}
\frac{1}{n} \sum_{i = 1}^{n} \left( X_{i}^{\top} (\tilde{\gamma} - \gamma^{\ast}) \right)^{2} & \lesssim \left( \sqrt{\frac{n}{\log p}} \right)^{- \frac{4 \xi_{\pi}}{2 \xi_{\pi} + 1}} + \left( \sqrt{\frac{n}{\log p}} \right)^{-1} \left( \sqrt{\frac{n}{\log p}} \right)^{- \frac{4 \xi_{\pi} - 2}{2 \xi_{\pi} + 1}} \\
& = \left( \sqrt{\frac{n}{\log p}} \right)^{- \frac{4 \xi_{\pi}}{2 \xi_{\pi} + 1}} + \left( \sqrt{\frac{n}{\log p}} \right)^{- \frac{4 \xi_{\pi} + 2 \xi_{\pi} - 1}{2 \xi_{\pi} + 1}} \\
& \lesssim \left( \sqrt{\frac{n}{\log p}} \right)^{- \frac{4 \xi_{\pi}}{2 \xi_{\pi} + 1}},
\end{align*}
with probability approaching 1, where the last inequality follows because $\xi_{\pi} > 1 / 2$.
\end{proof}

In this last step, we prove Corollary \ref{cor:as} (ii) with the above preparation. The proof resembles the proof of Theorem \ref{thm:prop} in Appendix \ref{app:prop}.

\begin{proof}
First, it is straightforward to see that Lemma \ref{lem:pitrim} and Lemma \ref{lem:infinities} with $\pi^{\ast}$ replaced by $\bar{\pi}^{\ast}$ still hold under approximately sparse GLMs. The proof then proceeds similarly by showing that $\bar{Q} = o_{\pr} (1)$, in which $\bar{Q}$ is defined in the same way as $Q$ in \eqref{Q} except replacing $\pi^\ast_i$'s by $\bar{\pi}^\ast_i$'s. The difference between $\bar{Q}$ and $Q$ can be shown to be $o_{\pr} (1)$ by invoking \eqref{tail ps} imposed under Condition \ref{cond:nuis as}. We apply the following decomposition of $\bar{Q}$, similar to that of $Q$ given in Appendix \ref{app:prop}:
\begin{align*}
\bar{Q} & = \frac{1}{\sqrt{n}} \sum_{i=1}^n \left(\frac{T_i (Y_i - \r_i)}{\bar{\pi}_i^* \tpi_i} - \frac{\hmu_i}{\tpi_i}\right) (\bar{\pi}_i^* - \tpi_i) - \frac{1}{\sqrt{n}} \sum_{i=1}^n \left(\frac{T_i}{\bar{\pi}_i^*} - 1\right) \frac{\hmu_i}{\tpi_i} (\bar{\pi}_i^* - \tpi_i) \\
 & = \underset{\mathrm{I}}{\underbrace{\frac{1}{\sqrt{n}} \sum_{i=1}^n \left(\frac{T_i (Y_i - \r_i)}{\bar{\pi}_i^* \tpi_i} - \frac{\hmu_i}{\tpi_i}\right) (\hpi_i - \tpi_i)}} + \underset{\mathrm{II}}{\underbrace{\frac{1}{\sqrt{n}} \sum_{i=1}^n \left(\frac{T_i (Y_i - \r_i)}{\bar{\pi}_i^* \tpi_i} - \frac{\hmu_i}{\tpi_i}\right) (\bar{\pi}_i^* - \hpi_i)}} \\
 & \quad - \underset{\mathrm{III}}{\underbrace{\frac{1}{\sqrt{n}} \sum_{i=1}^n \left(\frac{T_i}{\bar{\pi}_i^*} - 1\right) \frac{\hmu_i}{\tpi_i} (\bar{\pi}_i^* - \tpi_i)}}.
\end{align*}
In particular, term $\mathrm{III}$ can be shown, by the same Taylor expansion argument as in Appendix \ref{app:prop}, to be $o_{\pr} (1)$ by using Lemma \ref{lem:gammaest as} and $\xi_{\pi} > 1 / 2$. For terms $\mathrm{I}$ and $\mathrm{II}$, we follow the same decomposition strategy as in Appendix \ref{app:prop}, except with $s_{\pi} = o (\sqrt{n} / \log p)$ replaced by $\xi_{\pi} > 1 / 2$, Lemma \ref{lem:gammaest} replaced by Lemma \ref{lem:gammaest as}, and Lemma \ref{lem:infinities} with $\pi^{\ast}$ replaced by $\bar{\pi}^{\ast}$. The desired result then follows.
\end{proof}

\subsubsection{On Conditions \ref{cd:reg as} and \ref{cd:prop as}}
\label{app:nuis as}

The linear model case has been addressed in \citet{bradic2019minimax}. In this section, we show that Condition \ref{cd:reg as} or \ref{cd:prop as} holds for logistic linear model with the coefficients estimated by $\ell_{1}$-penalized maximum likelihood. We only need to demonstrate the existence of an estimator $\hat{\gamma}$ that satisfies Condition \ref{cd:prop as} and the same argument applies to Condition \ref{cd:reg as} by symmetry. Fix $s \asymp \sqrt{n / \log p}$. Under Condition \ref{cond:nuis as}, we have $\Vert \gamma^{\ast} - \gamma_{s}^{\ast} \Vert_{2} \ll \left( \sqrt{\frac{n}{\log p}} \right)^{- \xi_{\pi}}$. Let the corresponding active coordinate index set of $\gamma_{s}^{\ast}$ be $J^{\ast} \subset \{1, \cdots, p\}$ and $\vert J^{\ast} \vert = s$. We also let $\mathcal{L}_{n} (\gamma) \coloneqq \frac{1}{n} \sum_{i = 1}^{n} \left\{ \log \left( 1 + e^{\gamma^{\top} X_{i}} \right) - A_{i} \gamma^{\top} X_{i} \right\}$ denote the empirical mean of the negative log-likelihood function of the logistic-linear model.

Define, for some $C > 0$ sufficiently large,
\begin{equation}
\label{glm est}
\hat{\gamma} \coloneqq \arg \min_{\gamma \in \R^{p}} \frac{1}{n} \sum_{i = 1}^{n} \left\{ \log \left( 1 + e^{\gamma^{\top} X_{i}} \right) - A_{i} \gamma^{\top} X_{i} \right\} + C \lambda \Vert \gamma \Vert_{1}
\end{equation}
and
\begin{equation}
\label{glm pop}
\bar{\gamma} \coloneqq \arg \min_{\gamma \in \R^{p}} \E \left[ \log \left( 1 + e^{\gamma^{\top} X} \right) - \bar{\pi}^{\ast} (X) \gamma^{\top} X \right] + \lambda \sum_{j \in J^{\ast c}} \vert \gamma_{j} \vert.
\end{equation}
Let $\bar{J}$ be the indices of nonzero elements of $\bar{\gamma}$. In addition, we assume that there exist absolute constants $0 < c < C < 1$ such that
\begin{equation}
\label{bounded}
\text{$\phi (\bar{\gamma}^{\top} X)$ and $\phi (\gamma^{\ast \top} X)$ are both bounded between $c$ and $C$}.
\end{equation}
Assumption \ref{bounded} is guaranteed to hold (so not an assumption) for logistic linear model if we truncate $\bar{\gamma}^{\top} X$ when it is outside a certain range, as done in e.g. Section \ref{sec:dipw}. But we impose Assumption \ref{bounded} for convenience.

We first establish the following properties of $\bar{\gamma}$ as an approximation of $\gamma^{\ast}$.
\begin{lemma}
\label{lem:pop}
By choosing $\lambda = \sqrt{\frac{\log p}{n}}$, we have the following:
\begin{enumerate}
\item $\Vert \bar{\gamma} - \gamma^{\ast} \Vert_{2} \lesssim \left( \sqrt{\frac{n}{\log p}} \right)^{- \frac{2 \xi_{\pi}}{2 \xi_{\pi} + 1}}$;
\item $|\bar{J}| \lesssim \left( \sqrt{\frac{n}{\log p}} \right)^{\frac{2}{2 \xi_{\pi} + 1}}$;
\item $\left\Vert \E \left[ \left( \phi (\bar{\gamma}^{\top} X) - \phi (\gamma^{\ast \top} X) \right) X \right] \right\Vert_{\infty} \lesssim \lambda = \left( \sqrt{\frac{n}{\log p}} \right)^{-1}$;
\item If $\xi_{\pi} > 1 / 2$, $\Vert \bar{\gamma} - \gamma^{\ast} \Vert_{1} \lesssim \left( \sqrt{\frac{n}{\log p}} \right)^{- \frac{2 \xi_{\pi} - 1}{2 \xi_{\pi} + 1}}$.
\end{enumerate}
\end{lemma}

\begin{proof}
By the optimality of $\bar{\gamma}$ as a solution to \eqref{glm pop} and mean-value theorem, we have, for $\gamma_{1}$ between $\gamma^{\ast}, \bar{\gamma}$ and $\gamma_{2}$ between $\gamma^{\ast}, \gamma_{s}^{\ast}$,
\begin{align*}
& \ \E \left[ \log \left( 1 + e^{\bar{\gamma}^{\top} X} \right) - \phi (\gamma^{\ast \top} X) \bar{\gamma}^{\top} X \right] + \lambda \sum_{j \in J^{\ast c}} \vert \bar{\gamma}_{j} \vert \leq \E \left[ \log \left( 1 + e^{\gamma_{s}^{\ast \top} X} \right) - \phi (\gamma^{\ast \top} X) \gamma_{s}^{\ast \top} X \right] \\
\Rightarrow & \ (\bar{\gamma} - \gamma^{\ast})^{\top} \E \left[ [\phi (1 - \phi)] (\gamma_{1}^{\top} X) X X^{\top} \right] (\bar{\gamma} - \gamma^{\ast}) + \lambda \sum_{j \in J^{\ast c}} \vert \bar{\gamma}_{j} \vert \\
& \leq (\gamma_{s}^{\ast} - \gamma^{\ast})^{\top} \E \left[ [\phi (1 - \phi)] (\gamma_{2}^{\top} X) X X^{\top} \right] (\gamma_{s}^{\ast} - \gamma^{\ast}) \lesssim \Vert \gamma_{s}^{\ast} - \gamma^{\ast} \Vert_{2}^{2} \lesssim \left( \sqrt{\frac{n}{\log p}} \right)^{- \frac{4 \xi_{\pi}}{2 \xi_{\pi} + 1}}.
\end{align*}
Then by the lower bound assumed in \eqref{bounded}, we have
\begin{align*}
\Vert \bar{\gamma} - \gamma^{\ast} \Vert_{2} \lesssim \left( \sqrt{\frac{n}{\log p}} \right)^{- \frac{2 \xi_{\pi}}{2 \xi_{\pi} + 1}}.
\end{align*}

Next, by KKT conditions, we have for $j \in \bar{J} \setminus J^{\ast}$, $\left\vert \E \left[ (\phi (\bar{\gamma}^{\top} X) - \phi (\gamma^{\ast \top} X)) X^{\top} e_{j} \right] \right\vert = \lambda$, which implies
\begin{align*}
\sum_{j \in \bar{J} \setminus J^{\ast}} \left( \E \left[ (\phi (\bar{\gamma}^{\top} X) - \phi (\gamma^{\ast \top} X)) X^{\top} e_{j} \right] \right)^{2} = \lambda^{2} |\bar{J} \setminus J^{\ast}|.
\end{align*}
By the first statement of this lemma, we have
\begin{align*}
& \ \sum_{j \in \bar{J} \setminus J^{\ast}} \left( \E \left[ (\phi (\bar{\gamma}^{\top} X) - \phi (\gamma^{\ast \top} X)) X^{\top} e_{j} \right] \right)^{2} \\
\lesssim & \ \E \left[ (\phi (\bar{\gamma}^{\top} X) - \phi (\gamma^{\ast \top} X))^{2} \right] \lesssim \Vert \bar{\gamma} - \gamma^{\ast} \Vert_{2}^{2} \lesssim \left( \sqrt{\frac{n}{\log p}} \right)^{- \frac{4 \xi_{\pi}}{2 \xi_{\pi} + 1}}.
\end{align*}
Hence
\begin{align*}
|\bar{J} \setminus J^{\ast}| \lesssim \lambda^{-2} \left( \sqrt{\frac{n}{\log p}} \right)^{- \frac{4 \xi_{\pi}}{1 + 2 \xi_{\pi}}} = \left( \sqrt{\frac{n}{\log p}} \right)^{2 - \frac{4 \xi_{\pi}}{2 \xi_{\pi} + 1}} = \left( \sqrt{\frac{n}{\log p}} \right)^{\frac{2}{2 \xi_{\pi} + 1}}.
\end{align*}
Thus we have $|\bar{J}| = |\bar{J} \setminus J^{\ast}| + |J^{\ast}| \lesssim \left( \sqrt{\frac{n}{\log p}} \right)^{\frac{2}{2 \xi_{\pi} + 1}}$.

Finally, by directly following the proofs of Lemma B.1 and Lemma B.4 of \citet{bradic2019minimax}, we obtain the third and fourth claims.
\end{proof}

By Lemma \ref{lem:pop}.3, we obtain the following:
\begin{lemma}
\label{lem:emp}
With probability converging to 1, 
\begin{align*}
\left\Vert \nabla_{\gamma} \mathcal{L}_{n} (\bar{\gamma}) \right\Vert_{\infty} \equiv \left\Vert \frac{1}{n} \sum_{i = 1}^{n} \left( \phi (\bar{\gamma}^{\top} X_{i}) - A_{i} \right) X_{i} \right\Vert_{\infty} \leq \lambda = \left( \sqrt{\frac{n}{\log p}} \right)^{-1}.
\end{align*}
\end{lemma}

\begin{proof}
With probability converging to 1, we have
\begin{align*}
& \ \left\Vert \frac{1}{n} \sum_{i = 1}^{n} \left( \phi (\bar{\gamma}^{\top} X_{i}) - A_{i} \right) X_{i} \right\Vert_{\infty} \\
\leq & \ \left\Vert \frac{1}{n} \sum_{i = 1}^{n} \left( \phi (\bar{\gamma}^{\top} X_{i}) - A_{i} \right) X_{i} - \E \left[ \left( \phi (\bar{\gamma}^{\top} X) - A \right) X \right] \right\Vert_{\infty} + \left\Vert \E \left[ \left( \phi (\bar{\gamma}^{\top} X) - A \right) X \right] \right\Vert_{\infty} \\
\lesssim & \ \sqrt{\frac{\log p}{n}} + \left\Vert \E \left[ \left( \phi (\bar{\gamma}^{\top} X) - \phi (\gamma^{\ast \top} X) \right) X \right] \right\Vert_{\infty} + \left\Vert \E \left[ \left( \phi (\gamma^{\ast \top} X) - \pi^{\ast} (X) \right) X \right] \right\Vert_{\infty} \\
\overset{\text{Lemma \ref{lem:pop}.3}}{\lesssim} & \ \sqrt{\frac{\log p}{n}} + \left\Vert \E \left[ \left( \phi (\gamma^{\ast \top} X) - \pi^{\ast} (X) \right) X \right] \right\Vert_{\infty} \lesssim \sqrt{\frac{\log p}{n}},
\end{align*}
where the last inequality follows from \eqref{tail ps} imposed in Condition \ref{cond:nuis as}.
\end{proof}

Let $\delta \coloneqq \hat{\gamma} - \bar{\gamma}$ so $\hat{\gamma} = \bar{\gamma} + \delta$. We then have the following result.

\begin{lemma}
\label{lem:cone}
With probability approaching 1, we have $\Vert \delta_{\bar{J}^{c}} \Vert_{1} \lesssim \Vert \delta_{\bar{J}} \Vert_{1}$.
\end{lemma}

\begin{proof}
By the optimality of $\hat{\gamma}$ as a solution to \eqref{glm est} and the convexity of the log-likelihood function \citep{wainwright2019high}, we have
\begin{align*}
0 & \geq \underbrace{\frac{1}{n} \sum_{i = 1}^{n} \log \left( 1 + e^{(\delta + \bar{\gamma})^{\top} X_{i}} \right) - \log \left( 1 + e^{\bar{\gamma}^{\top} X_{i}} \right) - \delta^{\top} A_{i} X_{i} + C \lambda \left( \Vert \delta + \bar{\gamma} \Vert_{1} - \Vert \bar{\gamma} \Vert_{1} \right)}_{\eqqcolon \, \mathcal{E}_{n} (\delta)} \\
& \geq \frac{1}{n} \sum_{i = 1}^{n} (\phi (\bar{\gamma}^{\top} X_{i}) - A_{i}) X_{i}^{\top} \delta + C \lambda \left( \Vert \delta + \bar{\gamma} \Vert_{1} - \Vert \bar{\gamma} \Vert_{1} \right) \\
& \geq - \left| \frac{1}{n} \sum_{i = 1}^{n} (\phi (\bar{\gamma}^{\top} X_{i}) - A_{i}) X_{i}^{\top} \delta \right| + C \lambda \left( \Vert \delta + \bar{\gamma} \Vert_{1} - \Vert \bar{\gamma} \Vert_{1} \right) \\
& \geq - \left\Vert \frac{1}{n} \sum_{i = 1}^{n} (\phi (\bar{\gamma}^{\top} X_{i}) - A_{i}) X_{i} \right\Vert_{\infty} \Vert \delta \Vert_{1} + C \lambda \left( \Vert \delta + \bar{\gamma} \Vert_{1} - \Vert \bar{\gamma} \Vert_{1} \right) \\
& \gtrsim - \sqrt{\frac{\log p}{n}} \Vert \delta \Vert_{1} + C \sqrt{\frac{\log p}{n}} \left( \Vert \delta + \bar{\gamma} \Vert_{1} - \Vert \bar{\gamma} \Vert_{1} \right),
\end{align*}
with probability approach 1.

We further observe that
\begin{align*}
& \ \Vert \delta + \bar{\gamma} \Vert_{1} - \Vert \bar{\gamma} \Vert_{1} = \Vert \bar{\gamma}_{\bar{J}} + \delta_{\bar{J}^{c}} + \delta_{\bar{J}} \Vert_{1} - \Vert \bar{\gamma}_{\bar{J}} \Vert_{1} \\
\geq & \ \Vert \bar{\gamma}_{\bar{J}} + \delta_{\bar{J}^{c}} \Vert_{1} - \Vert \delta_{\bar{J}} \Vert_{1} - \Vert \bar{\gamma}_{\bar{J}} \Vert_{1} = \Vert \delta_{\bar{J}^{c}} \Vert_{1} - \Vert \delta_{\bar{J}} \Vert_{1}.
\end{align*}
So
\begin{align*}
0 \gtrsim - \sqrt{\frac{\log p}{n}} \left( \Vert \delta_{\bar{J}} \Vert_{1} + \Vert \delta_{\bar{J}^{c}} \Vert_{1} \right) + C \sqrt{\frac{\log p}{n}} \left( \Vert \delta_{\bar{J}^{c}} \Vert_{1} - \Vert \delta_{\bar{J}} \Vert_{1} \right) = \sqrt{\frac{\log p}{n}} \left( (C - 1) \Vert \delta_{\bar{J}^{c}} \Vert_{1} - (C + 1) \Vert \delta_{\bar{J}} \Vert_{1} \right),
\end{align*}
which implies that $\Vert \delta_{\bar{J}^{c}} \Vert_{1} \lesssim \Vert \delta_{\bar{J}} \Vert_{1}$ with probability approaching 1.
\end{proof}

We further use the fact that the following quantity satisfies the $(\kappa, \bar{\kappa})$-Restricted Strong Convexity (RSC) condition (Theorem 9.36 of \citet{wainwright2019high}):
\begin{equation}
\label{RSC}
\mathcal{L}_{n} (\bar{\delta} + \bar{\gamma}) - \mathcal{L}_{n} (\bar{\gamma}) - \nabla_{\gamma} \mathcal{L}_{n} (\bar{\gamma})^{\top} \bar{\delta} \geq \frac{\kappa}{2} \Vert \bar{\delta} \Vert_{2}^{2} - \bar{\kappa}^{2} \Vert \bar{\delta} \Vert_{1}^{2}
\end{equation}
for all $\bar{\delta}$ in a ball with radius $1$, with $\kappa > 0$ bounded from below and $\bar{\kappa} \asymp \sqrt{\frac{\log p}{n}}$ with probability approaching 1. We then eventually establish the following $\ell_{2}$- and $\ell_{1}$-convergence rates for $\hat{\gamma}$.

\begin{lemma}
\label{lem:rates}
With probability approaching 1, when $\xi_{\pi} > 1 / 2$, we have
\begin{align*}
\Vert \delta \Vert_{2} \lesssim \left( \sqrt{\frac{n}{\log p}} \right)^{- \frac{2 \xi_{\pi}}{2 \xi_{\pi} + 1}}, \Vert \delta \Vert_{1} \lesssim \left( \sqrt{\frac{n}{\log p}} \right)^{- \frac{2 \xi_{\pi} - 1}{2 \xi_{\pi} + 1}}.
\end{align*}
\end{lemma}

\begin{proof}
Take any vector $\bar{\delta}$ such that the conclusion in Lemma \ref{lem:cone} holds, $\Vert \bar{\delta} \Vert_{2} = \rho$ for some $\rho > 0$, and the $(\kappa, \bar{\kappa})$-RSC \eqref{RSC} condition holds (with probability approaching 1). We have, according to the proof of Lemma \ref{lem:cone},
\begin{align*}
\mathcal{E}_{n} (\bar{\delta}) & \geq \nabla_{\gamma} \mathcal{L}_{n} (\bar{\gamma})^{\top} \bar{\delta} + \frac{\kappa}{2} \Vert \bar{\delta} \Vert_{2}^{2} - \bar{\kappa}^{2} \Vert \bar{\delta} \Vert_{1}^{2} + C \lambda \left( \Vert \bar{\delta} + \bar{\gamma} \Vert_{1} - \Vert \bar{\gamma} \Vert_{1} \right) \\
& \geq \nabla_{\gamma} \mathcal{L}_{n} (\bar{\gamma})^{\top} \bar{\delta} + \frac{\kappa}{2} \Vert \bar{\delta} \Vert_{2}^{2} - \bar{\kappa}^{2} \Vert \bar{\delta} \Vert_{1}^{2} + C \lambda \left( \Vert \bar{\delta}_{\bar{J}^{c}} \Vert_{1} - \Vert \bar{\delta}_{\bar{J}} \Vert_{1} \right) \\
& \geq - \Vert \nabla_{\gamma} \mathcal{L}_{n} (\bar{\gamma}) \Vert_{\infty} \Vert \bar{\delta} \Vert_{1} + \frac{\kappa}{2} \Vert \bar{\delta} \Vert_{2}^{2} - \bar{\kappa}^{2} \Vert \bar{\delta} \Vert_{1}^{2} + C \lambda \left( \Vert \bar{\delta}_{\bar{J}^{c}} \Vert_{1} - \Vert \bar{\delta}_{\bar{J}} \Vert_{1} \right) \\
& \gtrsim - \sqrt{\frac{\log p}{n}} \Vert \bar{\delta} \Vert_{1} + \frac{\kappa}{2} \Vert \bar{\delta} \Vert_{2}^{2} - \bar{\kappa}^{2} \Vert \bar{\delta} \Vert_{1}^{2} + C \lambda \left( \Vert \bar{\delta}_{\bar{J}^{c}} \Vert_{1} - \Vert \bar{\delta}_{\bar{J}} \Vert_{1} \right) \\
& \gtrsim \frac{\kappa}{2} \Vert \bar{\delta} \Vert_{2}^{2} - \bar{\kappa}^{2} \Vert \bar{\delta} \Vert_{1}^{2} - \lambda \Vert \bar{\delta}_{\bar{J}} \Vert_{1} \\
& \gtrsim \frac{\kappa}{2} \Vert \bar{\delta} \Vert_{2}^{2} - \bar{\kappa}^{2} |\bar{J}| \Vert \bar{\delta}_{\bar{J}} \Vert_{2}^{2} - \lambda |\bar{J}|^{1 / 2} \Vert \bar{\delta}_{\bar{J}} \Vert_{2} \gtrsim \kappa^{2} \Vert \bar{\delta} \Vert_{2}^{2} - \lambda |\bar{J}|^{1 / 2} \Vert \bar{\delta}_{\bar{J}} \Vert_{2},
\end{align*}
where the second but last inequality also follows from Lemma \ref{lem:cone} and the last inequality follows from Lemma \ref{lem:pop}.2.

Then $\mathcal{E}_{n} (\bar{\delta}) > 0$ once $\Vert \bar{\delta} \Vert_{2} = c \lambda |\bar{J}|^{1 / 2}$ for some $c > 0$ sufficiently large, which is also the corresponding value that $\rho$ should take. Then by invoking Lemma 9.21 of \citet{wainwright2019high}, we must have $$\Vert \delta \Vert_{2} \lesssim \rho \asymp \lambda |\bar{J}|^{1 / 2} = \sqrt{\frac{\log p}{n}} \left( \sqrt{\frac{\log p}{n}} \right)^{- \frac{1}{2 \xi_{\pi} + 1}} \lesssim \left( \sqrt{\frac{n}{\log p}} \right)^{- \frac{2 \xi_{\pi}}{2 \xi_{\pi} + 1}}.$$
Then the rate of convergence of $\Vert \delta \Vert_{1}$ is a direct consequence of the above result and \eqref{l1-l2} below:
\begin{equation}
\label{l1-l2}
\Vert \delta \Vert_{1} \leq \Vert \delta_{\bar{J}} \Vert_{1} + \Vert \delta_{\bar{J}^{c}} \Vert_{1} \lesssim \Vert \delta_{\bar{J}} \Vert_{1} \leq \vert \bar{J} \vert^{1 / 2} \Vert \delta_{\bar{J}} \Vert_{2} \leq \vert \bar{J} \vert^{1 / 2} \Vert \delta \Vert_{2}.
\end{equation}
\end{proof}

\begin{remark}
The $\ell_{1}$- and $\ell_{2}$-convergence rates established in this section can also be extended to more general situations, as long as the underlying penalized loss function satisfies the RSC condition and the cone condition similar to that in Lemma \ref{lem:cone} is met.
\end{remark}

\section{Additional results on simulation studies}
\label{app:sim}

\subsection{More details on the simulation studies}
\label{app:details}

For the DCal estimator, we choose the number of sample splits as $K = 6$ and conduct cross-fitting. The initial estimators for the PS and OR are computed by calling the $\texttt{cv.glmnet}$ function in R with $\texttt{alpha = 0.9}$ and the tuning parameters are selected by using 10-fold cross-validation and the 1 standard error rule. To make a fair comparison, we use the same initial PS and OR estimates for the g-formula, IPW, AIPW, TMLE, ARB, hdCBPS, and DCal estimators. For hdCBPS, we use the function $\texttt{hdCBPS}$ in the $\texttt{CBPS}$ R package but fix the initial PS and OR estimates. As the optimization algorithm used in $\texttt{hdCBPS}$ is computationally demanding, we limit the number of iterations to 100. For RCAL, we directly call the function $\texttt{glm.regu.cv}$ from the $\texttt{RCAL}$ R package, with the number of folds for cross-validation equal to 10 and the total number of possible tuning parameter values equal to 100 for the calibrated estimation of PS and OR. In order to compare the computation time of DCal with those of hdCBPS and RCAL, we execute the programs on a server powered by the x86 architecture, specifically equipped with 2 Intel Xeon ICX Platinum 8358 processors (2.6GHz, 32 cores). For all the other tuning parameters or setups of the competing methods, we set them to the default options as in their respective R packages. Finally, we remark that for the simulation settings to be shown in Sections \ref{app:sparse ps supp} and \ref{app:sparse or supp}, we only show the simulation results for $\text{RCAL}^\star$, which reduces the number of folds for cross-validation to 5 and the total number of possible tuning parameter values to 11 and hence runs faster. The statistical performance for the faster version of RCAL is not significantly deteriorated (see \href{https://github.com/Cinbo-Wang/DCal}{the accompanied GitHub website} for more details).

\subsection{Case I: Sparse PS \& Dense OR}
\label{app:sparse ps}

\subsubsection{Supplementary tables for the simulation setting from the main text}
Tables \ref{tab: sparse_PS_measure_CI} and \ref{tab: sparse_PS_measure} below summarize the coverage probabilities and the lengths of 95\% confidence intervals, Mean/Median Absolute Biases, standard errors, and Root-Mean-Squared-Errors (RMSE) of different estimators for the Sparse PS \& Dense OR simulation setting from the Section \ref{sec:sparse ps} of the main text.

\begin{longtable}{@{}cccccccccc@{}}
\caption{Coverage probability of 95\% confidence intervals (Coverage), and length of 95\% confidence intervals (CI length) for estimating $\tau$ under the \textit{sparse PS \& dense OR} setting from the main text.}
\label{tab: sparse_PS_measure_CI}\\
\toprule
\textbf{Setting} & \textbf{measure} & \textbf{g-formula} & \textbf{IPW} & \textbf{AIPW} & \textbf{TMLE} & \textbf{ARB} & \textbf{hdCBPS} & \textbf{RCAL} & \textbf{DCal} \\* \midrule
\endfirsthead
\multicolumn{10}{c}%
{{\bfseries Table \thetable\ continued from previous page}} \\
\toprule
\endhead
\cmidrule(l){2-10}
\endfoot
\endlastfoot
\multirow{2}{*}{\begin{tabular}[c]{@{}c@{}}$s=10$,  $ p=400$,\\      $n=200$\end{tabular}} & CP & 0.0\% & 57.0\% & 15.0\% & 90.5\% & 92.0\% & 53.5\% & 32.0\% & 96.0\% \\
 & Length & 0.06 & 2.28 & 1.21 & 2.64 & 1.92 & 1.98 & 1.27 & 2.09 \\
\multirow{2}{*}{\begin{tabular}[c]{@{}c@{}}$s=10$,  $ p=800$,\\      $n=400$\end{tabular}} & CP & 0.5\% & 28.5\% & 8.5\% & 93.5\% & 82.0\% & 60.5\% & 15.5\% & 91.5\% \\
 & Length & 0.05 & 1.77 & 0.97 & 2.06 & 1.45 & 1.81 & 0.94 & 1.69 \\
\multirow{2}{*}{\begin{tabular}[c]{@{}c@{}}$s=10$,  $ p=1000$,\\      $n=800$\end{tabular}} & CP & 0.0\% & 18.0\% & 11.5\% & 97.5\% & 72.0\% & 88.0\% & 22.0\% & 89.0\% \\
 & Length & 0.09 & 1.38 & 0.75 & 1.62 & 1.05 & 1.71 & 0.77 & 1.28 \\
\multirow{2}{*}{\begin{tabular}[c]{@{}c@{}}$s=10$,  $ p=1000$,\\      $n=1600$\end{tabular}} & CP & 0.0\% & 15.5\% & 18.0\% & 99.5\% & 49.0\% & 98.5\% & 24.1\% & 75.5\% \\
 & Length & 0.10 & 1.12 & 0.59 & 1.19 & 0.73 & 1.33 & 0.58 & 0.93 \\
 \hline
\multirow{2}{*}{\begin{tabular}[c]{@{}c@{}}$s=20$,  $ p=400$,\\      $n=200$\end{tabular}} & CP & 0.0\% & 38.0\% & 11.5\% & 98.5\% & 95.0\% & 47.0\% & 32.5\% & 96.0\% \\
 & Length & 0.05 & 2.21 & 1.15 & 3.75 & 2.04 & 1.92 & 1.27 & 2.18 \\
\multirow{2}{*}{\begin{tabular}[c]{@{}c@{}}$s=20$,  $ p=800$,\\      $n=400$\end{tabular}} & CP & 0.0\% & 15.0\% & 6.5\% & 97.5\% & 92.0\% & 64.0\% & 22.0\% & 93.0\% \\
 & Length & 0.06 & 1.61 & 0.87 & 2.79 & 1.49 & 1.68 & 0.92 & 1.65 \\
\multirow{2}{*}{\begin{tabular}[c]{@{}c@{}}$s=20$,  $ p=1000$,\\      $n=800$\end{tabular}} & CP & 0.0\% & 10.0\% & 7.5\% & 100.0\% & 85.5\% & 82.5\% & 12.0\% & 95.5\% \\
 & Length & 0.08 & 1.29 & 0.72 & 2.28 & 1.11 & 1.61 & 0.68 & 1.34 \\
\multirow{2}{*}{\begin{tabular}[c]{@{}c@{}}$s=20$,  $ p=1000$,\\      $n=1600$\end{tabular}} & CP & 0.0\% & 7.5\% & 6.0\% & 99.5\% & 82.5\% & 92.5\% & 20.9\% & 93.5\% \\
 & Length & 0.09 & 1.03 & 0.56 & 1.79 & 0.77 & 1.32 & 0.57 & 0.97 \\  \hline
\multirow{2}{*}{\begin{tabular}[c]{@{}c@{}}$s=50$,  $ p=400$,\\      $n=200$\end{tabular}} & CP & 0.5\% & 43.0\% & 10.0\% & 99.0\% & 94.0\% & 62.0\% & 31.0\% & 96.5\% \\
 & Length & 0.09 & 2.15 & 1.08 & 4.96 & 2.04 & 1.77 & 1.26 & 2.11 \\
\multirow{2}{*}{\begin{tabular}[c]{@{}c@{}}$s=50$,  $ p=800$,\\      $n=400$\end{tabular}} & CP & 0.0\% & 17.0\% & 7.0\% & 99.5\% & 94.0\% & 79.5\% & 37.0\% & 95.5\% \\
 & Length & 0.10 & 1.60 & 0.82 & 3.82 & 1.50 & 1.48 & 0.93 & 1.61 \\
\multirow{2}{*}{\begin{tabular}[c]{@{}c@{}}$s=50$,  $ p=1000$,\\      $n=800$\end{tabular}} & CP & 0.0\% & 8.5\% & 4.0\% & 100.0\% & 93.5\% & 89.5\% & 23.0\% & 97.5\% \\
 & Length & 0.11 & 1.22 & 0.63 & 3.05 & 1.11 & 1.16 & 0.69 & 1.25 \\
\multirow{2}{*}{\begin{tabular}[c]{@{}c@{}}$s=50$,  $ p=1000$,\\      $n=1600$\end{tabular}} & CP & 0.0\% & 7.0\% & 5.5\% & 98.0\% & 94.5\% & 85.5\% & 13.5\% & 98.5\% \\
 & Length & 0.11 & 0.99 & 0.51 & 2.44 & 0.80 & 0.91 & 0.51 & 0.94 \\* \bottomrule
\end{longtable}

\begin{longtable}{cccccccccc}
\caption{Mean Absolute Bias ($\text{MAB}^\dag$), Median Absolute Bias ($\text{MAB}^\ddag$), standard error (std), root-mean-squared error (RMSE) for  estimating $\tau$ under the \textit{sparse PS \& dense OR} setting from the main text.}
\label{tab: sparse_PS_measure}\\
\toprule
\textbf{Setting} & \textbf{measure} & \textbf{g-formula} & \textbf{IPW} & \textbf{AIPW} & \textbf{TMLE} & \textbf{ARB} & \textbf{hdCBPS} & \textbf{RCAL} & \textbf{DCal} \\
\midrule
\endfirsthead
\multicolumn{10}{c}%
{{\bfseries Table \thetable\ continued from previous page}} \\
\midrule
\endhead
\multirow{4}{*}{\begin{tabular}[c]{@{}c@{}}$s=10$,   \\     $ p=400$,\\     $ n=200$\end{tabular}} & $\text{MAB}^\dag$ & 1.34 & 1.06 & 0.98 & 0.70 & 0.53 & 0.78 & 0.78 & 0.47 \\
 & $\text{MAB}^\ddag$ & 1.39 & 1.08 & 1.00 & 0.71 & 0.50 & 0.80 & 0.80 & 0.41 \\
 & std & 0.02 & 0.58 & 0.31 & 0.67 & 0.49 & 0.50 & 0.32 & 0.53 \\
 & RMSE & 1.38 & 1.09 & 1.03 & 0.79 & 0.64 & 0.88 & 0.87 & 0.60 \\
 \hline 
\multirow{4}{*}{\begin{tabular}[c]{@{}c@{}}$s=10$,   \\     $ p=800$,\\     $ n=400$\end{tabular}} & $\text{MAB}^\dag$ & 1.35 & 1.02 & 0.92 & 0.56 & 0.48 & 0.60 & 0.77 & 0.40 \\
 & $\text{MAB}^\ddag$ & 1.41 & 1.02 & 0.93 & 0.54 & 0.44 & 0.56 & 0.80 & 0.33 \\
 & std & 0.01 & 0.45 & 0.25 & 0.53 & 0.37 & 0.46 & 0.24 & 0.43 \\
 & RMSE & 1.39 & 1.05 & 0.97 & 0.63 & 0.57 & 0.74 & 0.84 & 0.50 \\
 \hline \multirow{4}{*}{\begin{tabular}[c]{@{}c@{}}$s=10$,   \\     $ p=1000$,\\     $ n=800$\end{tabular}} & $\text{MAB}^\dag$ & 1.11 & 0.91 & 0.70 & 0.32 & 0.40 & 0.32 & 0.61 & 0.33 \\
 & $\text{MAB}^\ddag$ & 1.11 & 0.93 & 0.71 & 0.28 & 0.39 & 0.23 & 0.59 & 0.30 \\
 & std & 0.02 & 0.35 & 0.19 & 0.41 & 0.27 & 0.44 & 0.20 & 0.33 \\
 & RMSE & 1.15 & 0.93 & 0.75 & 0.39 & 0.47 & 0.44 & 0.67 & 0.40 \\
 \hline \multirow{4}{*}{\begin{tabular}[c]{@{}c@{}}$s=10$,   \\     $ p=1000$,\\     $ n=1600$\end{tabular}} & $\text{MAB}^\dag$ & 0.87 & 0.81 & 0.51 & 0.20 & 0.37 & 0.20 & 0.45 & 0.34 \\
 & $\text{MAB}^\ddag$ & 0.86 & 0.82 & 0.51 & 0.18 & 0.37 & 0.14 & 0.43 & 0.34 \\
 & std & 0.03 & 0.29 & 0.15 & 0.30 & 0.19 & 0.34 & 0.15 & 0.24 \\
 & RMSE & 0.89 & 0.83 & 0.55 & 0.25 & 0.41 & 0.26 & 0.50 & 0.38 \\
 \hline \multirow{4}{*}{\begin{tabular}[c]{@{}c@{}}$s=20$,   \\     $ p=400$,\\     $ n=200$\end{tabular}} & $\text{MAB}^\dag$ & 1.44 & 1.15 & 1.08 & 0.66 & 0.51 & 0.84 & 0.84 & 0.47 \\
 & $\text{MAB}^\ddag$ & 1.48 & 1.15 & 1.11 & 0.62 & 0.48 & 0.87 & 0.82 & 0.36 \\
 & std & 0.01 & 0.56 & 0.29 & 0.96 & 0.52 & 0.49 & 0.32 & 0.56 \\
 & RMSE & 1.49 & 1.20 & 1.15 & 0.77 & 0.62 & 0.96 & 0.95 & 0.59 \\
 \hline \multirow{4}{*}{\begin{tabular}[c]{@{}c@{}}$s=20$,   \\     $ p=800$,\\     $ n=400$\end{tabular}} & $\text{MAB}^\dag$ & 1.36 & 1.06 & 0.94 & 0.40 & 0.40 & 0.59 & 0.70 & 0.34 \\
 & $\text{MAB}^\ddag$ & 1.38 & 1.05 & 0.94 & 0.35 & 0.36 & 0.53 & 0.69 & 0.29 \\
 & std & 0.02 & 0.41 & 0.22 & 0.71 & 0.38 & 0.43 & 0.23 & 0.42 \\
 & RMSE & 1.39 & 1.09 & 0.99 & 0.50 & 0.49 & 0.71 & 0.79 & 0.44 \\
 \hline \multirow{4}{*}{\begin{tabular}[c]{@{}c@{}}$s=20$,   \\     $ p=1000$,\\     $ n=800$\end{tabular}} & $\text{MAB}^\dag$ & 1.26 & 0.98 & 0.80 & 0.29 & 0.33 & 0.36 & 0.62 & 0.28 \\
 & $\text{MAB}^\ddag$ & 1.27 & 0.98 & 0.81 & 0.25 & 0.29 & 0.27 & 0.59 & 0.23 \\
 & std & 0.02 & 0.33 & 0.18 & 0.58 & 0.28 & 0.41 & 0.17 & 0.34 \\
 & RMSE & 1.28 & 1.00 & 0.84 & 0.38 & 0.40 & 0.47 & 0.68 & 0.35 \\
 \hline \multirow{4}{*}{\begin{tabular}[c]{@{}c@{}}$s=20$,   \\     $ p=1000$,\\     $ n=1600$\end{tabular}} & $\text{MAB}^\dag$ & 1.03 & 0.89 & 0.61 & 0.32 & 0.26 & 0.25 & 0.46 & 0.22 \\
 & $\text{MAB}^\ddag$ & 1.00 & 0.90 & 0.60 & 0.29 & 0.24 & 0.21 & 0.46 & 0.19 \\
 & std & 0.02 & 0.26 & 0.14 & 0.46 & 0.20 & 0.34 & 0.14 & 0.25 \\
 & RMSE & 1.05 & 0.91 & 0.65 & 0.38 & 0.30 & 0.31 & 0.52 & 0.27 \\
 \hline \multirow{4}{*}{\begin{tabular}[c]{@{}c@{}}$s=50$,   \\     $ p=400$,\\     $ n=200$\end{tabular}} & $\text{MAB}^\dag$ & 1.24 & 1.09 & 1.00 & 0.60 & 0.51 & 0.71 & 0.77 & 0.47 \\
 & $\text{MAB}^\ddag$ & 1.25 & 1.09 & 1.04 & 0.56 & 0.45 & 0.67 & 0.78 & 0.40 \\
 & std & 0.02 & 0.55 & 0.27 & 1.26 & 0.52 & 0.45 & 0.32 & 0.54 \\
 & RMSE & 1.29 & 1.13 & 1.07 & 0.74 & 0.62 & 0.84 & 0.87 & 0.59 \\
 \hline \multirow{4}{*}{\begin{tabular}[c]{@{}c@{}}$s=50$,   \\     $ p=800$,\\     $ n=400$\end{tabular}} & $\text{MAB}^\dag$ & 1.14 & 0.98 & 0.85 & 0.42 & 0.37 & 0.44 & 0.58 & 0.34 \\
 & $\text{MAB}^\ddag$ & 1.17 & 0.98 & 0.87 & 0.36 & 0.31 & 0.36 & 0.58 & 0.27 \\
 & std & 0.02 & 0.41 & 0.21 & 0.97 & 0.38 & 0.38 & 0.24 & 0.41 \\
 & RMSE & 1.17 & 1.01 & 0.89 & 0.54 & 0.45 & 0.55 & 0.66 & 0.43 \\
 \hline \multirow{4}{*}{\begin{tabular}[c]{@{}c@{}}$s=50$,   \\     $ p=1000$,\\     $ n=800$\end{tabular}} & $\text{MAB}^\dag$ & 1.06 & 0.92 & 0.73 & 0.40 & 0.25 & 0.32 & 0.49 & 0.23 \\
 & $\text{MAB}^\ddag$ & 1.07 & 0.93 & 0.73 & 0.32 & 0.20 & 0.30 & 0.49 & 0.18 \\
 & std & 0.03 & 0.31 & 0.16 & 0.78 & 0.28 & 0.30 & 0.18 & 0.32 \\
 & RMSE & 1.08 & 0.94 & 0.76 & 0.51 & 0.31 & 0.38 & 0.54 & 0.29 \\
 \hline \multirow{4}{*}{\begin{tabular}[c]{@{}c@{}}$s=50$,   \\     $ p=1000$,\\     $ n=1600$\end{tabular}} & $\text{MAB}^\dag$ & 0.94 & 0.84 & 0.60 & 0.48 & 0.19 & 0.26 & 0.45 & 0.18 \\
 & $\text{MAB}^\ddag$ & 0.94 & 0.88 & 0.61 & 0.45 & 0.16 & 0.25 & 0.47 & 0.16 \\
 & std & 0.03 & 0.25 & 0.13 & 0.62 & 0.20 & 0.23 & 0.13 & 0.24 \\
 & RMSE & 0.95 & 0.86 & 0.62 & 0.57 & 0.23 & 0.30 & 0.48 & 0.22 \\
 \bottomrule
\end{longtable}

\subsubsection{Supplementary simulation settings}
\label{app:sparse ps supp}

Here, we modify the simulation setting in Section \ref{sec:sparse ps} by only changing $\gamma_j \sim \mathrm{Uniform}([-2,-1]\cup[1,2])$, while keeping the other parameters the same as in the main text. This modification alleviates the near violation of the positivity assumption, as demonstrated in Figure \ref{fig: hist_propensity_sparse_PS}, showcasing the histogram of the true PS's for the case $n = 800$, $p = 1000$, and $s = 10$.
\begin{figure}[htpb]
    \centering    
    \includegraphics[page=1,scale=0.6]{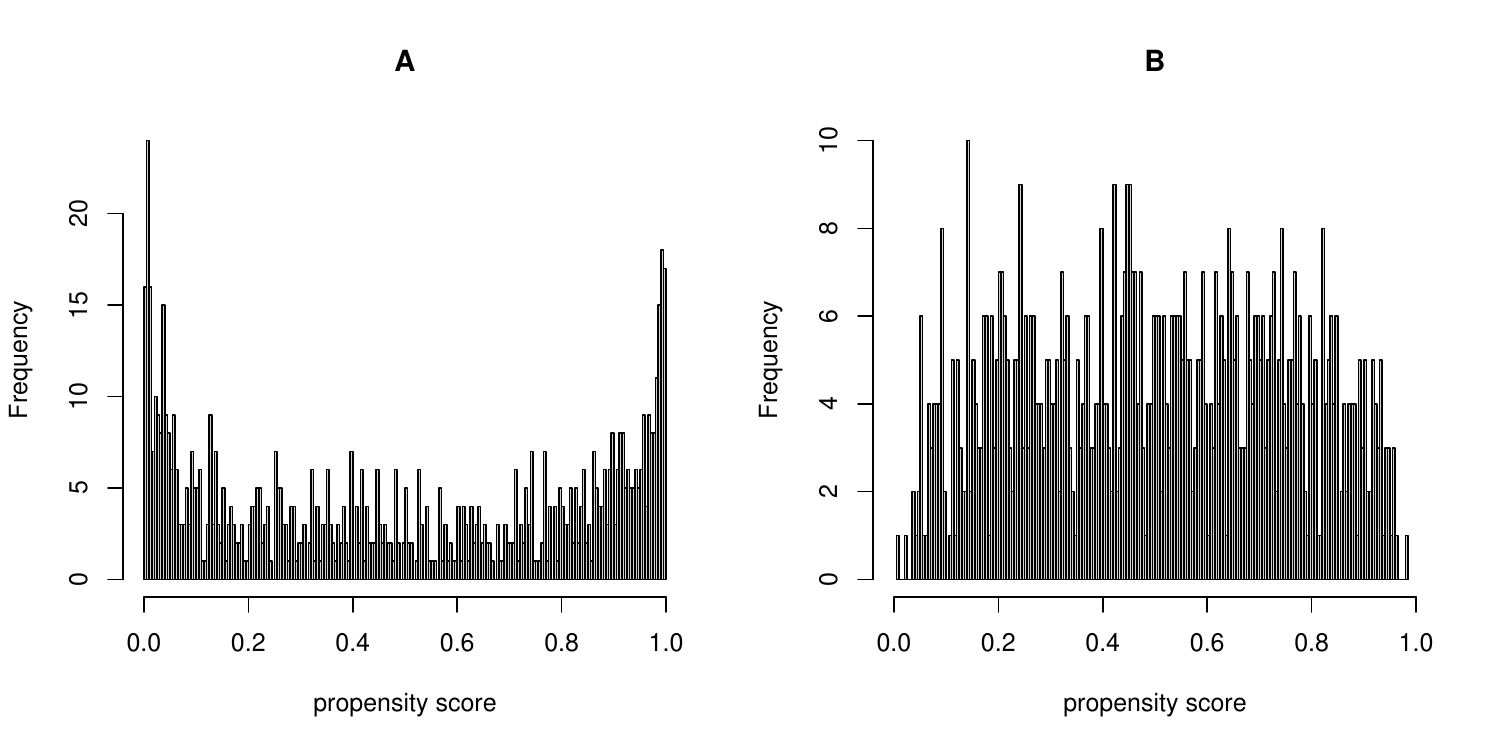}
    \caption{Histograms of the propensity score under the \textit{sparse PS \& dense OR} setting. The left figure (A) represents the setting described in the main text, while the right figure (B) represents the setting described in Appendix \ref{app:sparse ps}. Here, we chose a scenario with $n=800, p=1000 \text{ and } s=10$.}
    \label{fig: hist_propensity_sparse_PS}
\end{figure}

The corresponding simulation results are displayed in Figure \ref{fig: simu_sparse_PS_supp}, 
Table \ref{tab: sparse_PS_measure_CI_supp}, and Table \ref{tab: sparse_PS_measure_supp}. Root-N scaled absolute estimation errors are shown in Figure \ref{fig: simu_sparse_PS_supp}. Tables \ref{tab: sparse_PS_measure_CI_supp} and \ref{tab: sparse_PS_measure_supp} below summarize the coverage probabilities and the lengths of 95\% confidence intervals, Mean/Median Absolute Biases, standard errors, and Root-Mean-Errors (RMSE) of different estimators for this setting. Reading from these results, the performance of $\hat{\tau}_{\text{RCAL}}$, $\hat{\tau}_{\text{hdCBPS}}$, and $\hat{\tau}_{\text{DCal}}$ are quite similar. 

\begin{figure}[htpb]
    \centering
    \includegraphics[width = \textwidth, page=1]{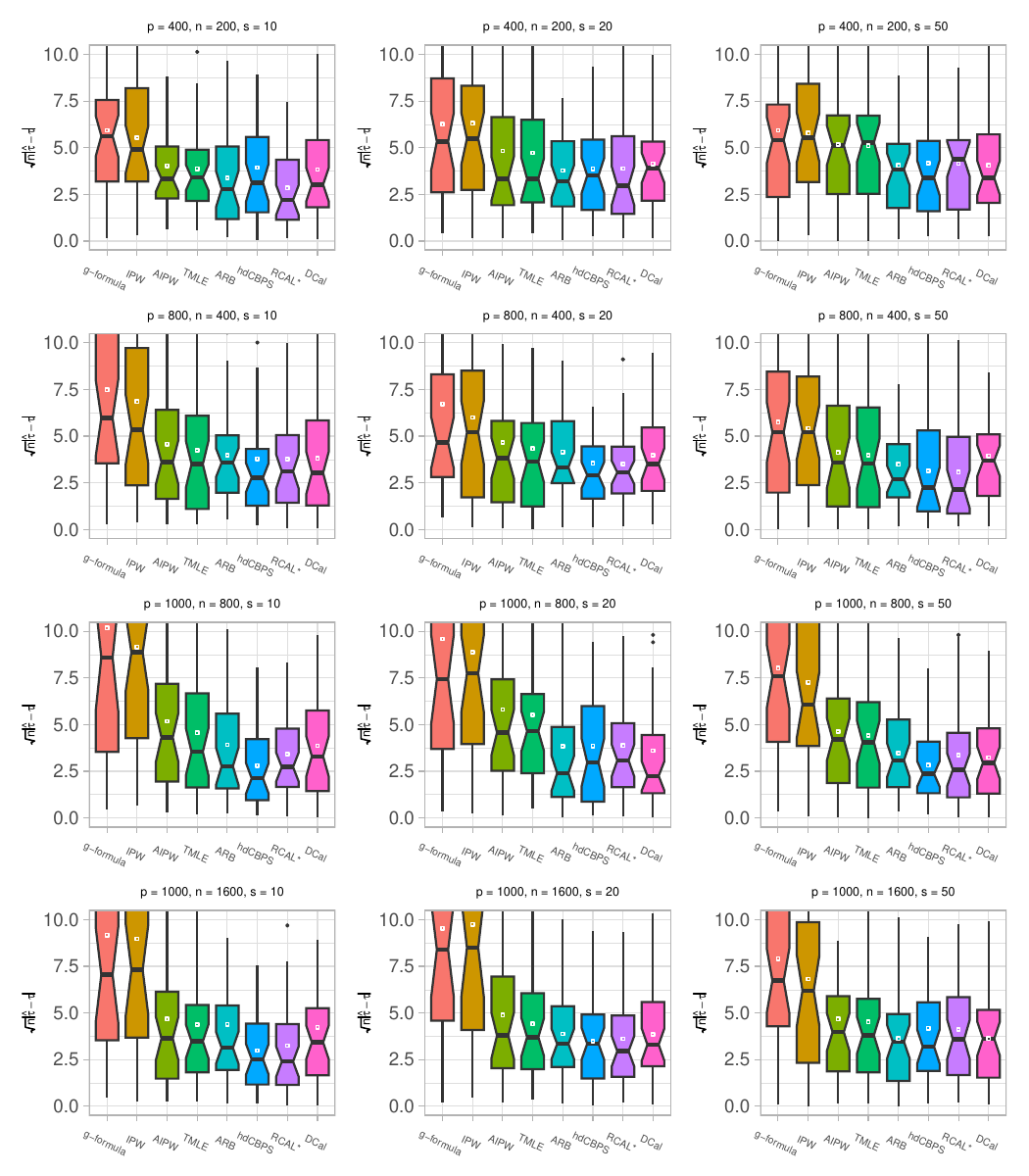}
    \caption{Boxplots of the $\sqrt{n}$-scaled estimation error $\sqrt{n}|\hat{\tau}-\tau|$ under different sample size $n$, dimension $p$ and sparsity level $s$ under the \textit{sparse PS \& dense OR} setting. The white dots correspond to mean.}
    \label{fig: simu_sparse_PS_supp}
\end{figure}

\begin{longtable}{@{}cccccccccc@{}}
\caption{Coverage probabilities of 95\% confidence intervals (Coverage), and length of 95\% confidence intervals (CI length) for estimating $\tau$ under the \textit{Sparse PS \& Dense OR} setting.}
\label{tab: sparse_PS_measure_CI_supp}\\
\toprule
\textbf{Setting} & \textbf{measure} & \textbf{g-formula} & \textbf{IPW} & \textbf{AIPW} & \textbf{TMLE} & \textbf{ARB} & \textbf{hdCBPS} & $\textbf{RCAL}^\star \footnotemark$ & \textbf{DCal} \\* \midrule
\endfirsthead
\multicolumn{10}{c}%
{{\bfseries Table \thetable\ continued from previous page}} \\
\toprule
\endhead
\cmidrule(l){2-10}
\endfoot
\endlastfoot
\multirow{2}{*}{\begin{tabular}[c]{@{}c@{}}$s=10$,  $ p=400$,\\      $n=200$\end{tabular}} & CP & 17.5\% & 100.0\% & 97.5\% & 100.0\% & 100.0\% & 100.0\% & 100.0\% & 100.0\% \\
 & Length & 0.23 & 3.06 & 1.38 & 1.42 & 1.58 & 1.57 & 1.29 & 1.71 \\
\multirow{2}{*}{\begin{tabular}[c]{@{}c@{}}$s=10$,  $ p=800$,\\      $n=400$\end{tabular}} & CP & 25.0\% & 97.5\% & 85.0\% & 95.0\% & 97.5\% & 92.3\% & 94.9\% & 97.5\% \\
 & Length & 0.22 & 2.21 & 1.01 & 1.03 & 1.14 & 1.17 & 0.96 & 1.22 \\
\multirow{2}{*}{\begin{tabular}[c]{@{}c@{}}$s=10$,  $ p=1000$,\\      $n=800$\end{tabular}} & CP & 20.0\% & 95.0\% & 85.0\% & 92.5\% & 95.0\% & 100.0\% & 95.0\% & 95.0\% \\
 & Length & 0.18 & 1.58 & 0.71 & 0.73 & 0.77 & 0.83 & 0.69 & 0.86 \\
\multirow{2}{*}{\begin{tabular}[c]{@{}c@{}}$s=10$,  $ p=1000$,\\      $n=1600$\end{tabular}} & CP & 27.5\% & 95.0\% & 85.0\% & 95.0\% & 97.5\% & 100.0\% & 97.5\% & 97.5\% \\
 & Length & 0.16 & 1.12 & 0.50 & 0.50 & 0.52 & 0.54 & 0.50 & 0.59 \\  \hline
\multirow{2}{*}{\begin{tabular}[c]{@{}c@{}}$s=20$,  $ p=400$,\\      $n=200$\end{tabular}} & CP & 12.5\% & 100.0\% & 85.0\% & 90.0\% & 100.0\% & 97.5\% & 97.5\% & 100.0\% \\
 & Length & 0.19 & 3.08 & 1.44 & 1.51 & 1.67 & 1.59 & 1.36 & 1.78 \\
\multirow{2}{*}{\begin{tabular}[c]{@{}c@{}}$s=20$,  $ p=800$,\\      $n=400$\end{tabular}} & CP & 20.0\% & 100.0\% & 92.5\% & 95.0\% & 95.0\% & 95.0\% & 95.0\% & 100.0\% \\
 & Length & 0.19 & 2.22 & 1.02 & 1.05 & 1.11 & 1.10 & 0.97 & 1.19 \\
\multirow{2}{*}{\begin{tabular}[c]{@{}c@{}}$s=20$,  $ p=1000$,\\      $n=800$\end{tabular}} & CP & 15.0\% & 95.0\% & 82.5\% & 87.5\% & 95.0\% & 97.5\% & 92.5\% & 97.5\% \\
 & Length & 0.17 & 1.57 & 0.72 & 0.73 & 0.79 & 0.78 & 0.71 & 0.88 \\
\multirow{2}{*}{\begin{tabular}[c]{@{}c@{}}$s=20$,  $ p=1000$,\\      $n=1600$\end{tabular}} & CP & 22.5\% & 95.0\% & 92.5\% & 95.0\% & 100.0\% & 100.0\% & 95.0\% & 100.0\% \\
 & Length & 0.16 & 1.13 & 0.50 & 0.50 & 0.52 & 0.53 & 0.50 & 0.59 \\ \hline
\multirow{2}{*}{\begin{tabular}[c]{@{}c@{}}$s=50$,  $ p=400$,\\      $n=200$\end{tabular}} & CP & 20.0\% & 100.0\% & 92.5\% & 95.0\% & 97.5\% & 95.0\% & 95.0\% & 97.5\% \\
 & Length & 0.23 & 3.07 & 1.43 & 1.47 & 1.64 & 1.48 & 1.34 & 1.74 \\
\multirow{2}{*}{\begin{tabular}[c]{@{}c@{}}$s=50$,  $ p=800$,\\      $n=400$\end{tabular}} & CP & 32.5\% & 100.0\% & 95.0\% & 97.5\% & 100.0\% & 97.5\% & 100.0\% & 100.0\% \\
 & Length & 0.22 & 2.19 & 1.01 & 1.02 & 1.12 & 1.04 & 0.98 & 1.23 \\
\multirow{2}{*}{\begin{tabular}[c]{@{}c@{}}$s=50$,  $ p=1000$,\\      $n=800$\end{tabular}} & CP & 20.0\% & 100.0\% & 95.0\% & 95.0\% & 100.0\% & 100.0\% & 100.0\% & 100.0\% \\
 & Length & 0.18 & 1.56 & 0.71 & 0.71 & 0.75 & 0.72 & 0.69 & 0.83 \\
\multirow{2}{*}{\begin{tabular}[c]{@{}c@{}}$s=50$,  $ p=1000$,\\      $n=1600$\end{tabular}} & CP & 12.5\% & 100.0\% & 90.0\% & 90.0\% & 97.5\% & 95.0\% & 92.5\% & 100.0\% \\
 & Length & 0.16 & 1.11 & 0.49 & 0.48 & 0.51 & 0.49 & 0.49 & 0.57 \\* \bottomrule
\end{longtable}
\footnotetext{$\text{RCAL}^\star$ refers to RCAL with a five-fold cross-validation and eleven possible values for the tuning parameters. Due to time constraints, we only consider this quicker setting.}

\begin{longtable}{cccccccccc}
\caption{Mean Absolute Bias ($\text{MAB}^\dag$), Median Absolute Bias ($\text{MAB}^\ddag$), standard error (std), root-mean-squared error (RMSE) for estimating $\tau$ under the \textit{Sparse PS \& Dense OR} setting.}
\label{tab: sparse_PS_measure_supp}\\
\toprule
\textbf{Setting} & \textbf{measure} & \textbf{g-formula} & \textbf{IPW} & \textbf{AIPW} & \textbf{TMLE} & \textbf{ARB} & \textbf{hdCBPS} & $\textbf{RCAL}^\star$ & \textbf{DCal} \\
\midrule
\endfirsthead
\multicolumn{10}{c}%
{{\bfseries Table \thetable\ continued from previous page}} \\
\midrule
\endhead
\multirow{4}{*}{\begin{tabular}[c]{@{}c@{}}$s=10$,   \\   $   p=400$,\\ $     n=200$\end{tabular}} & $\text{MAB}^\dag$ & 0.42 & 0.39 & 0.28 & 0.27 & 0.24 & 0.28 & 0.20 & 0.27 \\
 &$\text{MAB}^\ddag$ & 0.40 & 0.35 & 0.24 & 0.24 & 0.20 & 0.22 & 0.16 & 0.21 \\
 & std & 0.06 & 0.78 & 0.35 & 0.36 & 0.40 & 0.40 & 0.33 & 0.44 \\
 & RMSE & 0.50 & 0.46 & 0.34 & 0.32 & 0.30 & 0.33 & 0.25 & 0.33 \\
\hline \multirow{4}{*}{\begin{tabular}[c]{@{}c@{}}$s=10$,   \\   $   p=800$,\\ $     n=400$\end{tabular}} & $\text{MAB}^\dag$ & 0.37 & 0.34 & 0.23 & 0.21 & 0.20 & 0.19 & 0.19 & 0.19 \\
 &$\text{MAB}^\ddag$ & 0.30 & 0.27 & 0.18 & 0.17 & 0.18 & 0.14 & 0.16 & 0.15 \\
 & std & 0.06 & 0.57 & 0.26 & 0.26 & 0.29 & 0.30 & 0.24 & 0.31 \\
 & RMSE & 0.48 & 0.43 & 0.30 & 0.28 & 0.24 & 0.25 & 0.24 & 0.25 \\
\hline \multirow{4}{*}{\begin{tabular}[c]{@{}c@{}}$s=10$,   \\   $   p=1000$,\\ $     n=800$\end{tabular}} & $\text{MAB}^\dag$ & 0.36 & 0.32 & 0.18 & 0.16 & 0.14 & 0.10 & 0.12 & 0.14 \\
 &$\text{MAB}^\ddag$ & 0.30 & 0.31 & 0.15 & 0.13 & 0.10 & 0.07 & 0.10 & 0.12 \\
 & std & 0.05 & 0.40 & 0.18 & 0.19 & 0.20 & 0.21 & 0.18 & 0.22 \\
 & RMSE & 0.45 & 0.38 & 0.24 & 0.21 & 0.18 & 0.13 & 0.15 & 0.18 \\
\hline \multirow{4}{*}{\begin{tabular}[c]{@{}c@{}}$s=10$,   \\   $   p=1000$,\\ $     n=1600$\end{tabular}} & $\text{MAB}^\dag$ & 0.23 & 0.22 & 0.12 & 0.11 & 0.11 & 0.07 & 0.08 & 0.11 \\
 &$\text{MAB}^\ddag$ & 0.18 & 0.18 & 0.09 & 0.09 & 0.08 & 0.06 & 0.06 & 0.09 \\
 & std & 0.04 & 0.29 & 0.13 & 0.13 & 0.13 & 0.14 & 0.13 & 0.15 \\
 & RMSE & 0.30 & 0.28 & 0.16 & 0.14 & 0.14 & 0.10 & 0.12 & 0.14 \\
\hline \multirow{4}{*}{\begin{tabular}[c]{@{}c@{}}$s=20$,   \\   $   p=400$,\\ $     n=200$\end{tabular}} & $\text{MAB}^\dag$ & 0.44 & 0.45 & 0.34 & 0.33 & 0.27 & 0.27 & 0.27 & 0.29 \\
 &$\text{MAB}^\ddag$ & 0.38 & 0.39 & 0.24 & 0.24 & 0.23 & 0.25 & 0.21 & 0.28 \\
 & std & 0.05 & 0.79 & 0.37 & 0.39 & 0.43 & 0.41 & 0.35 & 0.45 \\
 & RMSE & 0.56 & 0.55 & 0.44 & 0.43 & 0.33 & 0.34 & 0.35 & 0.35 \\
\hline \multirow{4}{*}{\begin{tabular}[c]{@{}c@{}}$s=20$,   \\   $   p=800$,\\ $     n=400$\end{tabular}} & $\text{MAB}^\dag$ & 0.34 & 0.30 & 0.23 & 0.22 & 0.21 & 0.18 & 0.18 & 0.20 \\
 &$\text{MAB}^\ddag$ & 0.23 & 0.26 & 0.19 & 0.18 & 0.17 & 0.15 & 0.15 & 0.18 \\
 & std & 0.05 & 0.57 & 0.26 & 0.27 & 0.28 & 0.28 & 0.25 & 0.30 \\
 & RMSE & 0.45 & 0.39 & 0.31 & 0.29 & 0.24 & 0.24 & 0.22 & 0.24 \\
\hline \multirow{4}{*}{\begin{tabular}[c]{@{}c@{}}$s=20$,   \\   $   p=1000$,\\ $     n=800$\end{tabular}} & $\text{MAB}^\dag$ & 0.34 & 0.31 & 0.21 & 0.19 & 0.14 & 0.14 & 0.14 & 0.13 \\
 &$\text{MAB}^\ddag$ & 0.26 & 0.27 & 0.16 & 0.16 & 0.08 & 0.10 & 0.11 & 0.08 \\
 & std & 0.04 & 0.40 & 0.18 & 0.19 & 0.20 & 0.20 & 0.18 & 0.22 \\
 & RMSE & 0.44 & 0.38 & 0.27 & 0.25 & 0.19 & 0.19 & 0.18 & 0.18 \\
\hline \multirow{4}{*}{\begin{tabular}[c]{@{}c@{}}$s=20$,   \\   $   p=1000$,\\ $     n=1600$\end{tabular}} & $\text{MAB}^\dag$ & 0.24 & 0.24 & 0.12 & 0.11 & 0.10 & 0.09 & 0.09 & 0.10 \\
 &$\text{MAB}^\ddag$ & 0.21 & 0.21 & 0.09 & 0.09 & 0.08 & 0.08 & 0.07 & 0.08 \\
 & std & 0.04 & 0.29 & 0.13 & 0.13 & 0.13 & 0.14 & 0.13 & 0.15 \\
 & RMSE & 0.30 & 0.30 & 0.15 & 0.14 & 0.12 & 0.11 & 0.11 & 0.12 \\
\hline \multirow{4}{*}{\begin{tabular}[c]{@{}c@{}}$s=50$,   \\   $   p=400$,\\ $     n=200$\end{tabular}} & $\text{MAB}^\dag$ & 0.42 & 0.41 & 0.37 & 0.36 & 0.29 & 0.30 & 0.29 & 0.29 \\
 &$\text{MAB}^\ddag$ & 0.38 & 0.39 & 0.36 & 0.37 & 0.27 & 0.24 & 0.31 & 0.24 \\
 & std & 0.06 & 0.78 & 0.37 & 0.37 & 0.42 & 0.38 & 0.34 & 0.44 \\
 & RMSE & 0.51 & 0.48 & 0.44 & 0.43 & 0.35 & 0.37 & 0.35 & 0.35 \\
\hline \multirow{4}{*}{\begin{tabular}[c]{@{}c@{}}$s=50$,   \\   $   p=800$,\\ $     n=400$\end{tabular}} & $\text{MAB}^\dag$ & 0.29 & 0.27 & 0.21 & 0.20 & 0.17 & 0.16 & 0.15 & 0.20 \\
 &$\text{MAB}^\ddag$ & 0.26 & 0.26 & 0.18 & 0.18 & 0.14 & 0.11 & 0.11 & 0.18 \\
 & std & 0.06 & 0.56 & 0.26 & 0.26 & 0.29 & 0.27 & 0.25 & 0.31 \\
 & RMSE & 0.36 & 0.32 & 0.26 & 0.25 & 0.22 & 0.21 & 0.20 & 0.24 \\
\hline \multirow{4}{*}{\begin{tabular}[c]{@{}c@{}}$s=50$,   \\   $   p=1000$,\\ $     n=800$\end{tabular}} & $\text{MAB}^\dag$ & 0.28 & 0.26 & 0.16 & 0.16 & 0.12 & 0.10 & 0.12 & 0.11 \\
 &$\text{MAB}^\ddag$ & 0.27 & 0.21 & 0.15 & 0.14 & 0.11 & 0.08 & 0.09 & 0.10 \\
 & std & 0.04 & 0.40 & 0.18 & 0.18 & 0.19 & 0.18 & 0.18 & 0.21 \\
 & RMSE & 0.35 & 0.30 & 0.20 & 0.19 & 0.15 & 0.12 & 0.15 & 0.14 \\
\hline \multirow{4}{*}{\begin{tabular}[c]{@{}c@{}}$s=50$,   \\   $   p=1000$,\\ $     n=1600$\end{tabular}} & $\text{MAB}^\dag$ & 0.20 & 0.17 & 0.12 & 0.11 & 0.09 & 0.10 & 0.10 & 0.09 \\
 &$\text{MAB}^\ddag$ & 0.17 & 0.15 & 0.10 & 0.10 & 0.09 & 0.08 & 0.09 & 0.09 \\
 & std & 0.04 & 0.28 & 0.13 & 0.12 & 0.13 & 0.13 & 0.13 & 0.15 \\
 & RMSE & 0.24 & 0.22 & 0.15 & 0.14 & 0.11 & 0.13 & 0.13 & 0.11 \\
 \bottomrule
\end{longtable}

\subsection{Case II: Dense PS \& Sparse OR}
\label{app:sparse or}

\subsubsection{Supplementary tables for the simulation setting from the main text}

Tables \ref{tab: sparse_OR_measure_CI} and \ref{tab: sparse_OR_measure} below summarize the coverage probabilities and the lengths of 95\% confidence intervals, Mean/Median Absolute Biases, standard errors, and Root-Mean-Squared-Errors (RMSE) of different estimators for the Dense PS \& Sparse OR simulation setting from the Section \ref{sec:sparse or} of the main text.

\begin{longtable}{@{}cccccccccc@{}}
\caption{Coverage probability of 95\% confidence intervals (Coverage), and length of 95\% confidence intervals (CI length) for estimating $\tau$ under the \textit{Dense PS \& Sparse OR} setting from the main text.}
\label{tab: sparse_OR_measure_CI}\\
\toprule
\textbf{Setting} & \textbf{measure} & \textbf{g-formula} & \textbf{IPW} & \textbf{AIPW} & \textbf{TMLE} & \textbf{ARB} & \textbf{hdCBPS} & \textbf{RCAL} & \textbf{DCal} \\
\midrule
\endfirsthead
\multicolumn{10}{c}%
{{\bfseries Table \thetable\ continued from previous page}} \\
\toprule
\endhead
\multirow{2}{*}{\begin{tabular}[c]{@{}c@{}}$s=10$,  $ p=400$,\\      $n=200$\end{tabular}} & CP & 17.5\% & 37.5\% & 66.0\% & 87.0\% & 87.5\% & 94.0\% & 68.5\% & 99.0\% \\
 & Length & 0.56 & 2.21 & 0.81 & 0.99 & 0.71 & 1.25 & 0.77 & 1.03 \\
\multirow{2}{*}{\begin{tabular}[c]{@{}c@{}}$s=10$,  $ p=800$,\\      $n=400$\end{tabular}} & CP & 13.5\% & 56.0\% & 83.0\% & 91.0\% & 92.0\% & 98.5\% & 80.5\% & 98.0\% \\
 & Length & 0.42 & 2.69 & 0.73 & 0.66 & 0.50 & 0.94 & 0.56 & 0.90 \\
\multirow{2}{*}{\begin{tabular}[c]{@{}c@{}}$s=10$,  $ p=1000$,\\      $n=800$\end{tabular}} & CP & 7.0\% & 79.0\% & 92.5\% & 83.0\% & 97.5\% & 99.5\% & 84.0\% & 100.0\% \\
 & Length & 0.31 & 3.31 & 0.64 & 0.50 & 0.36 & 0.64 & 0.41 & 0.82 \\
\multirow{2}{*}{\begin{tabular}[c]{@{}c@{}}$s=10$,  $ p=1000$,\\      $n=1600$\end{tabular}} & CP & 2.0\% & 98.5\% & 98.0\% & 66.5\% & 96.5\% & 99.5\% & 90.7\% & 100.0\% \\
 & Length & 0.23 & 3.48 & 0.56 & 0.28 & 0.25 & 0.46 & 0.30 & 0.70 \\ \hline
\multirow{2}{*}{\begin{tabular}[c]{@{}c@{}}$s=20$,  $ p=400$,\\      $n=200$\end{tabular}} & CP & 31.5\% & 41.5\% & 76.0\% & 83.5\% & 90.0\% & 94.0\% & 78.5\% & 99.0\% \\
 & Length & 0.73 & 2.82 & 0.98 & 0.98 & 0.71 & 1.34 & 0.92 & 1.17 \\
\multirow{2}{*}{\begin{tabular}[c]{@{}c@{}}$s=20$,  $ p=800$,\\      $n=400$\end{tabular}} & CP & 13.5\% & 56.0\% & 84.5\% & 84.0\% & 92.0\% & 98.0\% & 78.0\% & 98.5\% \\
 & Length & 0.54 & 3.24 & 0.81 & 0.70 & 0.51 & 0.98 & 0.66 & 0.99 \\
\multirow{2}{*}{\begin{tabular}[c]{@{}c@{}}$s=20$,  $ p=1000$,\\      $n=800$\end{tabular}} & CP & 11.5\% & 74.5\% & 95.5\% & 79.5\% & 98.5\% & 98.0\% & 85.4\% & 100.0\% \\
 & Length & 0.40 & 3.86 & 0.71 & 0.49 & 0.37 & 0.68 & 0.48 & 0.90 \\
\multirow{2}{*}{\begin{tabular}[c]{@{}c@{}}$s=20$,  $ p=1000$,\\      $n=1600$\end{tabular}} & CP & 6.5\% & 96.0\% & 98.5\% & 59.5\% & 95.5\% & 100.0\% & 92.5\% & 100.0\% \\
 & Length & 0.29 & 4.11 & 0.59 & 0.28 & 0.25 & 0.49 & 0.35 & 0.75 \\  \hline
\multirow{2}{*}{\begin{tabular}[c]{@{}c@{}}$s=50$,  $ p=400$,\\      $n=200$\end{tabular}} & CP & 44.5\% & 43.5\% & 77.5\% & 70.0\% & 80.5\% & 92.0\% & 83.5\% & 97.0\% \\
 & Length & 0.89 & 3.11 & 1.09 & 0.92 & 0.71 & 1.22 & 1.05 & 1.24 \\ 
\multirow{2}{*}{\begin{tabular}[c]{@{}c@{}}$s=50$,  $ p=800$,\\      $n=400$\end{tabular}} & CP & 32.5\% & 54.0\% & 87.0\% & 76.5\% & 91.0\% & 98.5\% & 87.0\% & 99.5\% \\
 & Length & 0.65 & 3.37 & 0.89 & 0.64 & 0.51 & 0.97 & 0.76 & 1.05 \\
\multirow{2}{*}{\begin{tabular}[c]{@{}c@{}}$s=50$,  $ p=1000$,\\      $n=800$\end{tabular}} & CP & 20.0\% & 71.5\% & 93.0\% & 72.0\% & 91.0\% & 99.0\% & 86.0\% & 100.0\% \\
 & Length & 0.48 & 3.87 & 0.74 & 0.46 & 0.37 & 0.70 & 0.55 & 0.93 \\
\multirow{2}{*}{\begin{tabular}[c]{@{}c@{}}$s=50$,  $ p=1000$,\\      $n=1600$\end{tabular}} & CP & 14.5\% & 96.5\% & 99.0\% & 59.0\% & 92.5\% & 99.0\% & 92.0\% & 100.0\% \\
 & Length & 0.35 & 4.17 & 0.61 & 0.27 & 0.25 & 0.51 & 0.40 & 0.77 \\
 \bottomrule
\end{longtable}

\begin{longtable}{cccccccccc}
\caption{Mean Absolute Bias ($\text{MAB}^\dag$), Median Absolute Bias ($\text{MAB}^\ddag$), standard error (std), root-mean-squared error (RMSE) for estimating $\tau$ under the \textit{Dense PS \& Sparse OR} setting from the main text.}
\label{tab: sparse_OR_measure}\\
\toprule
\textbf{Setting} & \textbf{measure} & \textbf{g-formula} & \textbf{IPW} & \textbf{AIPW} & \textbf{TMLE} & \textbf{ARB} & \textbf{hdCBPS} & \textbf{RCAL} & \textbf{DCal} \\
\midrule
\endfirsthead
\multicolumn{10}{c}%
{{\bfseries Table \thetable\ continued from previous page}} \\
\midrule
\endhead
\multirow{4}{*}{\begin{tabular}[c]{@{}c@{}}$s=10$,   \\     $ p=400$,\\     $ n=200$\end{tabular}} & $\text{MAB}^\dag$ & 0.47 & 1.03 & 0.30 & 0.20 & 0.18 & 0.19 & 0.29 & 0.17 \\
 & $\text{MAB}^\ddag$ & 0.48 & 1.05 & 0.27 & 0.17 & 0.15 & 0.15 & 0.27 & 0.14 \\
 & std & 0.14 & 0.56 & 0.21 & 0.25 & 0.18 & 0.32 & 0.20 & 0.26 \\
 & RMSE & 0.51 & 1.08 & 0.35 & 0.26 & 0.22 & 0.25 & 0.34 & 0.21 \\
 \hline
\multirow{4}{*}{\begin{tabular}[c]{@{}c@{}}$s=10$,   \\     $ p=800$,\\     $ n=400$\end{tabular}} & $\text{MAB}^\dag$ & 0.35 & 0.73 & 0.18 & 0.13 & 0.11 & 0.13 & 0.18 & 0.13 \\
 & $\text{MAB}^\ddag$ & 0.35 & 0.75 & 0.17 & 0.11 & 0.08 & 0.10 & 0.15 & 0.09 \\
 & std & 0.11 & 0.69 & 0.19 & 0.17 & 0.13 & 0.24 & 0.14 & 0.23 \\
 & RMSE & 0.37 & 0.82 & 0.22 & 0.16 & 0.14 & 0.17 & 0.21 & 0.18 \\
\hline \multirow{4}{*}{\begin{tabular}[c]{@{}c@{}}$s=10$,   \\     $ p=1000$,\\     $ n=800$\end{tabular}} & $\text{MAB}^\dag$ & 0.28 & 0.52 & 0.12 & 0.12 & 0.07 & 0.09 & 0.11 & 0.10 \\
 & $\text{MAB}^\ddag$ & 0.28 & 0.49 & 0.11 & 0.11 & 0.06 & 0.07 & 0.10 & 0.09 \\
 & std & 0.08 & 0.84 & 0.16 & 0.13 & 0.09 & 0.16 & 0.11 & 0.21 \\
 & RMSE & 0.29 & 0.62 & 0.15 & 0.14 & 0.08 & 0.11 & 0.13 & 0.13 \\
\hline \multirow{4}{*}{\begin{tabular}[c]{@{}c@{}}$s=10$,   \\     $ p=1000$,\\     $ n=1600$\end{tabular}} & $\text{MAB}^\dag$ & 0.24 & 0.42 & 0.09 & 0.10 & 0.05 & 0.06 & 0.07 & 0.07 \\
 & $\text{MAB}^\ddag$ & 0.24 & 0.33 & 0.08 & 0.09 & 0.04 & 0.05 & 0.06 & 0.06 \\
 & std & 0.06 & 0.89 & 0.14 & 0.07 & 0.06 & 0.12 & 0.08 & 0.18 \\
 & RMSE & 0.25 & 0.53 & 0.12 & 0.12 & 0.06 & 0.08 & 0.09 & 0.09 \\
\hline \multirow{4}{*}{\begin{tabular}[c]{@{}c@{}}$s=20$,   \\     $ p=400$,\\     $ n=200$\end{tabular}} & $\text{MAB}^\dag$ & 0.47 & 1.24 & 0.31 & 0.22 & 0.16 & 0.21 & 0.29 & 0.16 \\
 & $\text{MAB}^\ddag$ & 0.46 & 1.27 & 0.28 & 0.20 & 0.14 & 0.17 & 0.27 & 0.13 \\
 & std & 0.19 & 0.72 & 0.25 & 0.25 & 0.18 & 0.34 & 0.23 & 0.30 \\
 & RMSE & 0.51 & 1.30 & 0.36 & 0.28 & 0.21 & 0.27 & 0.34 & 0.21 \\
\hline \multirow{4}{*}{\begin{tabular}[c]{@{}c@{}}$s=20$,   \\     $ p=800$,\\     $ n=400$\end{tabular}} & $\text{MAB}^\dag$ & 0.40 & 0.90 & 0.20 & 0.16 & 0.12 & 0.13 & 0.21 & 0.13 \\
 & $\text{MAB}^\ddag$ & 0.39 & 0.97 & 0.19 & 0.14 & 0.10 & 0.10 & 0.20 & 0.11 \\
 & std & 0.14 & 0.83 & 0.21 & 0.18 & 0.13 & 0.25 & 0.17 & 0.25 \\
 & RMSE & 0.42 & 1.00 & 0.24 & 0.20 & 0.15 & 0.17 & 0.25 & 0.16 \\
\hline \multirow{4}{*}{\begin{tabular}[c]{@{}c@{}}$s=20$,   \\     $ p=1000$,\\     $ n=800$\end{tabular}} & $\text{MAB}^\dag$ & 0.30 & 0.65 & 0.13 & 0.12 & 0.07 & 0.09 & 0.14 & 0.10 \\
 & $\text{MAB}^\ddag$ & 0.29 & 0.63 & 0.12 & 0.11 & 0.06 & 0.06 & 0.13 & 0.08 \\
 & std & 0.10 & 0.98 & 0.18 & 0.12 & 0.09 & 0.17 & 0.12 & 0.23 \\
 & RMSE & 0.31 & 0.77 & 0.16 & 0.15 & 0.08 & 0.11 & 0.16 & 0.14 \\
\hline \multirow{4}{*}{\begin{tabular}[c]{@{}c@{}}$s=20$,   \\     $ p=1000$,\\     $ n=1600$\end{tabular}} & $\text{MAB}^\dag$ & 0.24 & 0.46 & 0.09 & 0.11 & 0.05 & 0.06 & 0.08 & 0.08 \\
 & $\text{MAB}^\ddag$ & 0.24 & 0.35 & 0.07 & 0.10 & 0.04 & 0.05 & 0.07 & 0.07 \\
 & std & 0.07 & 1.05 & 0.15 & 0.07 & 0.06 & 0.13 & 0.09 & 0.19 \\
 & RMSE & 0.24 & 0.58 & 0.12 & 0.12 & 0.06 & 0.08 & 0.10 & 0.10 \\
\hline \multirow{4}{*}{\begin{tabular}[c]{@{}c@{}}$s=50$,   \\     $ p=400$,\\     $ n=200$\end{tabular}} & $\text{MAB}^\dag$ & 0.50 & 1.39 & 0.38 & 0.28 & 0.22 & 0.28 & 0.34 & 0.20 \\
 & $\text{MAB}^\ddag$ & 0.48 & 1.43 & 0.36 & 0.28 & 0.20 & 0.25 & 0.34 & 0.17 \\
 & std & 0.23 & 0.79 & 0.28 & 0.24 & 0.18 & 0.31 & 0.27 & 0.32 \\
 & RMSE & 0.54 & 1.45 & 0.42 & 0.33 & 0.27 & 0.34 & 0.39 & 0.25 \\
\hline \multirow{4}{*}{\begin{tabular}[c]{@{}c@{}}$s=50$,   \\     $ p=800$,\\     $ n=400$\end{tabular}} & $\text{MAB}^\dag$ & 0.39 & 1.04 & 0.22 & 0.18 & 0.13 & 0.14 & 0.23 & 0.12 \\
 & $\text{MAB}^\ddag$ & 0.39 & 1.07 & 0.21 & 0.15 & 0.12 & 0.12 & 0.22 & 0.11 \\
 & std & 0.17 & 0.86 & 0.23 & 0.16 & 0.13 & 0.25 & 0.19 & 0.27 \\
 & RMSE & 0.41 & 1.13 & 0.26 & 0.22 & 0.15 & 0.18 & 0.27 & 0.16 \\
\hline \multirow{4}{*}{\begin{tabular}[c]{@{}c@{}}$s=50$,   \\     $ p=1000$,\\     $ n=800$\end{tabular}} & $\text{MAB}^\dag$ & 0.32 & 0.75 & 0.14 & 0.14 & 0.08 & 0.09 & 0.17 & 0.11 \\
 & $\text{MAB}^\ddag$ & 0.31 & 0.72 & 0.13 & 0.13 & 0.06 & 0.08 & 0.15 & 0.09 \\
 & std & 0.12 & 0.99 & 0.19 & 0.12 & 0.09 & 0.18 & 0.14 & 0.24 \\
 & RMSE & 0.33 & 0.88 & 0.17 & 0.17 & 0.10 & 0.12 & 0.19 & 0.14 \\
\hline \multirow{4}{*}{\begin{tabular}[c]{@{}c@{}}$s=50$,   \\     $ p=1000$,\\     $ n=1600$\end{tabular}} & $\text{MAB}^\dag$ & 0.24 & 0.44 & 0.09 & 0.11 & 0.05 & 0.08 & 0.09 & 0.08 \\
 & $\text{MAB}^\ddag$ & 0.24 & 0.38 & 0.08 & 0.10 & 0.04 & 0.07 & 0.09 & 0.06 \\
 & std & 0.09 & 1.06 & 0.15 & 0.07 & 0.06 & 0.13 & 0.10 & 0.20 \\
 & RMSE & 0.24 & 0.55 & 0.12 & 0.13 & 0.06 & 0.10 & 0.11 & 0.09 \\
 \bottomrule
\end{longtable}

\subsubsection{Supplementary simulation settings}
\label{app:sparse or supp}

Here, we modify the simulation setting in Section \ref{sec:sparse or} by only changing $\gamma_{j} \propto j^{-0.5}\xi$  where $\xi \sim \mathrm{Uniform}(\{1,-1\}) $ with $\Vert \gamma \Vert_{2} \equiv 1$, while keeping other parameters the same as in the main text. This modification alleviates the near violation of the positivity assumption, as demonstrated in Figure \ref{fig: hist_propensity_sparse_OR}, showcasing the histogram of the true PS's for the case $n = 800$, $p = 1000$, and $s = 10$.

\begin{figure}[htpb]
    \centering    
    \includegraphics[page=1,scale=0.6]{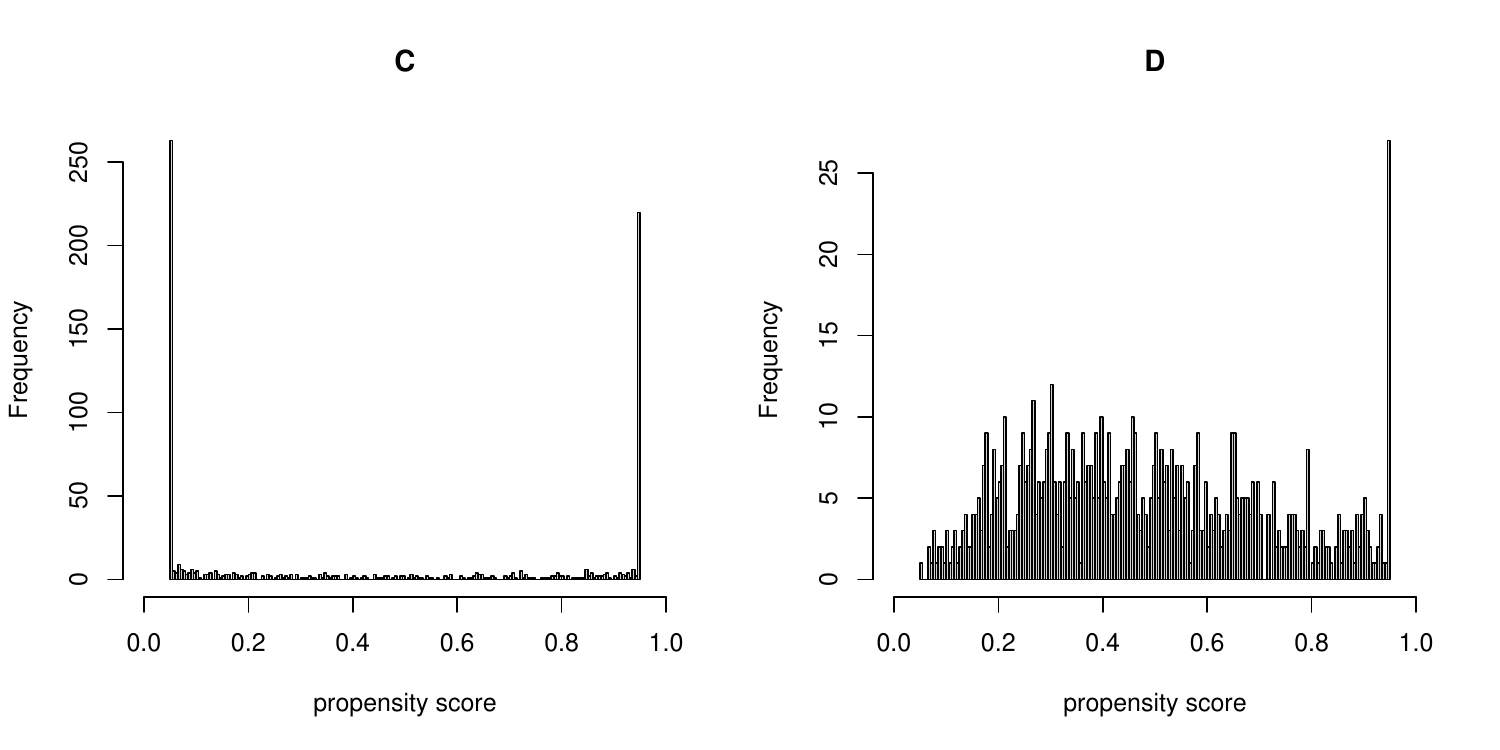}
    \caption{Histograms of the propensity score under the \textit{Dense PS \& Sparse OR} setting. The left figure (A) represents the setting described in the main text, while the right figure (B) represents the setting described in Appendix \ref{app:sparse or}. Here, we chose a scenario with $n=800, p=1000 \text{ and } s=10$.}
    \label{fig: hist_propensity_sparse_OR}
\end{figure}

The corresponding simulation results are displayed in Figure \ref{fig: simu_sparse_OR_supp}, Table \ref{tab: sparse_OR_measure_CI_supp}, and Table \ref{tab: sparse_OR_measure_supp}. Root-N scaled absolute estimation errors are given in Figure \ref{fig: simu_sparse_OR_supp}. Tables \ref{tab: sparse_OR_measure_CI_supp} and \ref{tab: sparse_OR_measure_supp} below summarize the coverage probabilities
and the lengths of 95\% confidence intervals, Mean/Median Absolute Biases, standard errors, and Root-
Mean-Errors (RMSE) of different estimators for this setting. The performance of $\hat{\tau}_{\text{RCAL}}$, $\hat{\tau}_{\text{hdCBPS}}$, and $\hat{\tau}_{\text{DCal}}$ are quite similar

\begin{figure}[htpb]
    \centering
    \includegraphics[width = \textwidth, page=1]{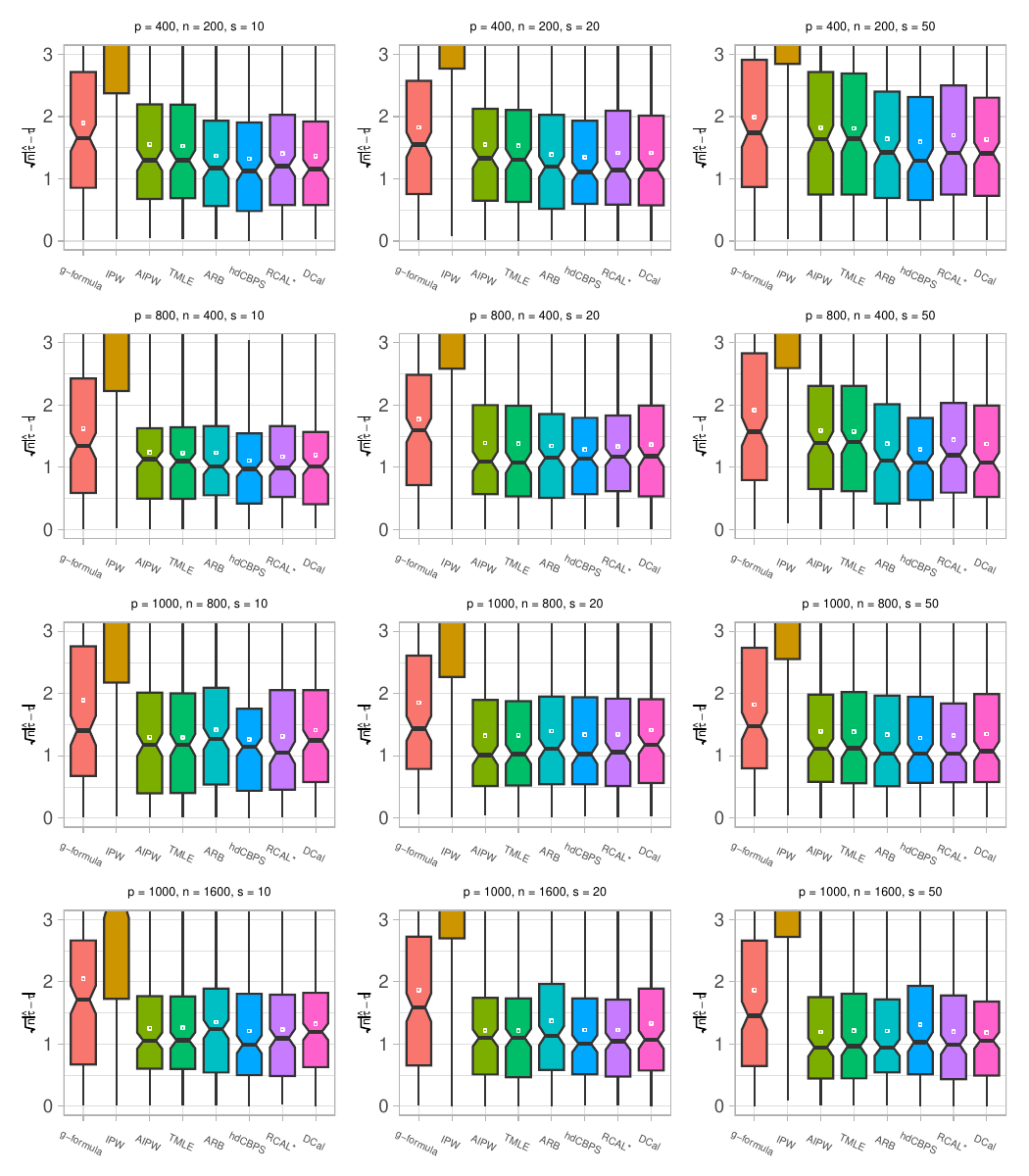}
    \caption{Boxplots of the  $\sqrt{n}$-scaled  estimation error $\sqrt{n}|\hat{\tau}-\tau|$ under different sample size $n$, dimension $p$ and sparsity level $s$ under the  \textit{Dense PS \& Sparse OR} setting. The white dots correspond to means.}
    \label{fig: simu_sparse_OR_supp}
\end{figure}

\begin{longtable}{@{}cccccccccc@{}}
\caption{Coverage probabilities of 95\% confidence intervals (Coverage), and length of 95\% confidence intervals (CI length) for estimating $\tau$ under the \textit{Dense PS \& Sparse OR} setting.}
\label{tab: sparse_OR_measure_CI_supp}\\
\toprule
\textbf{Setting} & \textbf{measure} & \textbf{g-formula} & \textbf{IPW} & \textbf{AIPW} & \textbf{TMLE} & \textbf{ARB} & \textbf{hdCBPS} & $\textbf{RCAL}^\star$ & \textbf{DCal} \\
\midrule
\endfirsthead
\multicolumn{10}{c}%
{{\bfseries Table \thetable\ continued from previous page}} \\
\toprule
\endhead
\multirow{2}{*}{\begin{tabular}[c]{@{}c@{}}$s=10$,  $ p=400$,\\      $n=200$\end{tabular}} & CP & 92.5\% & 99.5\% & 99.5\% & 92.0\% & 97.5\% & 100.0\% & 99.0\% & 100.0\% \\
 & Length & 0.61 & 2.49 & 0.84 & 0.45 & 0.52 & 0.79 & 0.82 & 0.89 \\
\multirow{2}{*}{\begin{tabular}[c]{@{}c@{}}$s=10$,  $ p=800$,\\      $n=400$\end{tabular}} & CP & 96.0\% & 100.0\% & 100.0\% & 95.5\% & 98.0\% & 100.0\% & 100.0\% & 100.0\% \\
 & Length & 0.45 & 1.79 & 0.60 & 0.31 & 0.35 & 0.56 & 0.59 & 0.63 \\
\multirow{2}{*}{\begin{tabular}[c]{@{}c@{}}$s=10$,  $ p=1000$,\\      $n=800$\end{tabular}} & CP & 92.5\% & 100.0\% & 100.0\% & 96.0\% & 94.5\% & 100.0\% & 100.0\% & 100.0\% \\
 & Length & 0.33 & 1.32 & 0.43 & 0.22 & 0.24 & 0.40 & 0.42 & 0.45 \\
\multirow{2}{*}{\begin{tabular}[c]{@{}c@{}}$s=10$,  $ p=1000$,\\      $n=1600$\end{tabular}} & CP & 90.0\% & 100.0\% & 100.0\% & 95.5\% & 95.5\% & 100.0\% & 100.0\% & 100.0\% \\
 & Length & 0.24 & 0.97 & 0.30 & 0.15 & 0.17 & 0.29 & 0.30 & 0.31 \\ \hline
\multirow{2}{*}{\begin{tabular}[c]{@{}c@{}}$s=20$,  $ p=400$,\\      $n=200$\end{tabular}} & CP & 99.0\% & 100.0\% & 99.5\% & 87.0\% & 93.5\% & 100.0\% & 99.5\% & 100.0\% \\
 & Length & 0.79 & 2.93 & 0.99 & 0.43 & 0.52 & 0.93 & 0.97 & 1.03 \\
\multirow{2}{*}{\begin{tabular}[c]{@{}c@{}}$s=20$,  $ p=800$,\\      $n=400$\end{tabular}} & CP & 98.5\% & 99.5\% & 100.0\% & 91.0\% & 96.5\% & 100.0\% & 100.0\% & 100.0\% \\
 & Length & 0.57 & 2.09 & 0.70 & 0.30 & 0.35 & 0.66 & 0.70 & 0.73 \\
\multirow{2}{*}{\begin{tabular}[c]{@{}c@{}}$s=20$,  $ p=1000$,\\      $n=800$\end{tabular}} & CP & 97.5\% & 99.5\% & 100.0\% & 92.0\% & 93.0\% & 100.0\% & 100.0\% & 100.0\% \\
 & Length & 0.42 & 1.51 & 0.50 & 0.22 & 0.24 & 0.47 & 0.50 & 0.52 \\
\multirow{2}{*}{\begin{tabular}[c]{@{}c@{}}$s=20$,  $ p=1000$,\\      $n=1600$\end{tabular}} & CP & 98.5\% & 100.0\% & 100.0\% & 94.5\% & 94.5\% & 100.0\% & 100.0\% & 100.0\% \\
 & Length & 0.30 & 1.12 & 0.36 & 0.15 & 0.17 & 0.34 & 0.35 & 0.36 \\ \hline
\multirow{2}{*}{\begin{tabular}[c]{@{}c@{}}$s=50$,  $ p=400$,\\      $n=200$\end{tabular}} & CP & 99.0\% & 99.5\% & 100.0\% & 77.5\% & 92.5\% & 99.5\% & 100.0\% & 100.0\% \\
 & Length & 0.95 & 3.30 & 1.11 & 0.39 & 0.51 & 1.04 & 1.10 & 1.15 \\
\multirow{2}{*}{\begin{tabular}[c]{@{}c@{}}$s=50$,  $ p=800$,\\      $n=400$\end{tabular}} & CP & 99.0\% & 99.5\% & 100.0\% & 86.5\% & 93.5\% & 100.0\% & 100.0\% & 100.0\% \\
 & Length & 0.68 & 2.33 & 0.80 & 0.29 & 0.35 & 0.75 & 0.79 & 0.83 \\
\multirow{2}{*}{\begin{tabular}[c]{@{}c@{}}$s=50$,  $ p=1000$,\\      $n=800$\end{tabular}} & CP & 100.0\% & 100.0\% & 100.0\% & 93.0\% & 95.5\% & 100.0\% & 100.0\% & 100.0\% \\
 & Length & 0.49 & 1.67 & 0.57 & 0.21 & 0.25 & 0.54 & 0.57 & 0.59 \\
\multirow{2}{*}{\begin{tabular}[c]{@{}c@{}}$s=50$,  $ p=1000$,\\      $n=1600$\end{tabular}} & CP & 98.0\% & 99.5\% & 100.0\% & 95.0\% & 96.5\% & 100.0\% & 100.0\% & 100.0\% \\
 & Length & 0.35 & 1.23 & 0.41 & 0.15 & 0.17 & 0.39 & 0.40 & 0.41 \\ \bottomrule
\end{longtable}

\begin{longtable}{cccccccccc}
\caption{Mean Absolute Bias ($\text{MAB}^\dag$), Median Absolute Bias ($\text{MAB}^\ddag$), standard error (std), root-mean-squared error (RMSE) for estimating $\tau$ under the \textit{Dense PS \& Sparse OR} setting.}
\label{tab: sparse_OR_measure_supp}\\
\toprule
\textbf{Setting} & \textbf{measure} & \textbf{g-formula} & \textbf{IPW} & \textbf{AIPW} & \textbf{TMLE} & \textbf{ARB} & \textbf{hdCBPS} & $\textbf{RCAL}^\star$ & \textbf{DCal} \\
\midrule
\endfirsthead
\multicolumn{10}{c}%
{{\bfseries Table \thetable\ continued from previous page}} \\
\midrule
\endhead
\multirow{4}{*}{\begin{tabular}[c]{@{}c@{}}$s=10$,   \\   $   p=400$,\\ $     n=200$\end{tabular}} & $\text{MAB}^\dag$ & 0.12 & 0.34 & 0.09 & 0.09 & 0.08 & 0.08 & 0.09 & 0.08 \\
 &$\text{MAB}^\ddag$ & 0.12 & 0.34 & 0.09 & 0.09 & 0.08 & 0.08 & 0.09 & 0.08 \\
 & std & 0.16 & 0.64 & 0.22 & 0.12 & 0.13 & 0.20 & 0.21 & 0.23 \\
 & RMSE & 0.17 & 0.45 & 0.14 & 0.13 & 0.12 & 0.12 & 0.13 & 0.12 \\
\hline \multirow{4}{*}{\begin{tabular}[c]{@{}c@{}}$s=10$,   \\   $   p=800$,\\ $     n=400$\end{tabular}} & $\text{MAB}^\dag$ & 0.07 & 0.23 & 0.06 & 0.05 & 0.05 & 0.05 & 0.05 & 0.05 \\
 &$\text{MAB}^\ddag$ & 0.07 & 0.23 & 0.06 & 0.05 & 0.05 & 0.05 & 0.05 & 0.05 \\
 & std & 0.12 & 0.46 & 0.15 & 0.08 & 0.09 & 0.14 & 0.15 & 0.16 \\
 & RMSE & 0.10 & 0.28 & 0.08 & 0.08 & 0.08 & 0.07 & 0.07 & 0.08 \\
\hline \multirow{4}{*}{\begin{tabular}[c]{@{}c@{}}$s=10$,   \\   $   p=1000$,\\ $     n=800$\end{tabular}} & $\text{MAB}^\dag$ & 0.05 & 0.14 & 0.04 & 0.04 & 0.04 & 0.04 & 0.04 & 0.04 \\
 &$\text{MAB}^\ddag$ & 0.05 & 0.14 & 0.04 & 0.04 & 0.04 & 0.04 & 0.04 & 0.04 \\
 & std & 0.08 & 0.34 & 0.11 & 0.06 & 0.06 & 0.10 & 0.11 & 0.11 \\
 & RMSE & 0.09 & 0.18 & 0.06 & 0.06 & 0.06 & 0.06 & 0.06 & 0.06 \\
\hline \multirow{4}{*}{\begin{tabular}[c]{@{}c@{}}$s=10$,   \\   $   p=1000$,\\ $     n=1600$\end{tabular}} & $\text{MAB}^\dag$ & 0.04 & 0.08 & 0.03 & 0.03 & 0.03 & 0.02 & 0.03 & 0.03 \\
 &$\text{MAB}^\ddag$ & 0.04 & 0.08 & 0.03 & 0.03 & 0.03 & 0.02 & 0.03 & 0.03 \\
 & std & 0.06 & 0.25 & 0.08 & 0.04 & 0.04 & 0.07 & 0.08 & 0.08 \\
 & RMSE & 0.07 & 0.11 & 0.04 & 0.04 & 0.04 & 0.04 & 0.04 & 0.04 \\
\hline \multirow{4}{*}{\begin{tabular}[c]{@{}c@{}}$s=20$,   \\   $   p=400$,\\ $     n=200$\end{tabular}} & $\text{MAB}^\dag$ & 0.11 & 0.36 & 0.09 & 0.09 & 0.08 & 0.08 & 0.08 & 0.08 \\
 &$\text{MAB}^\ddag$ & 0.11 & 0.36 & 0.09 & 0.09 & 0.08 & 0.08 & 0.08 & 0.08 \\
 & std & 0.20 & 0.75 & 0.25 & 0.11 & 0.13 & 0.24 & 0.25 & 0.26 \\
 & RMSE & 0.16 & 0.52 & 0.14 & 0.14 & 0.13 & 0.12 & 0.13 & 0.13 \\
\hline \multirow{4}{*}{\begin{tabular}[c]{@{}c@{}}$s=20$,   \\   $   p=800$,\\ $     n=400$\end{tabular}} & $\text{MAB}^\dag$ & 0.08 & 0.29 & 0.05 & 0.05 & 0.06 & 0.06 & 0.06 & 0.06 \\
 &$\text{MAB}^\ddag$ & 0.08 & 0.29 & 0.05 & 0.05 & 0.06 & 0.06 & 0.06 & 0.06 \\
 & std & 0.15 & 0.53 & 0.18 & 0.08 & 0.09 & 0.17 & 0.18 & 0.19 \\
 & RMSE & 0.11 & 0.37 & 0.09 & 0.09 & 0.08 & 0.08 & 0.08 & 0.09 \\
\hline \multirow{4}{*}{\begin{tabular}[c]{@{}c@{}}$s=20$,   \\   $   p=1000$,\\ $     n=800$\end{tabular}} & $\text{MAB}^\dag$ & 0.05 & 0.18 & 0.04 & 0.04 & 0.04 & 0.04 & 0.04 & 0.04 \\
 &$\text{MAB}^\ddag$ & 0.05 & 0.18 & 0.04 & 0.04 & 0.04 & 0.04 & 0.04 & 0.04 \\
 & std & 0.11 & 0.38 & 0.13 & 0.06 & 0.06 & 0.12 & 0.13 & 0.13 \\
 & RMSE & 0.08 & 0.24 & 0.06 & 0.06 & 0.06 & 0.06 & 0.06 & 0.06 \\
\hline \multirow{4}{*}{\begin{tabular}[c]{@{}c@{}}$s=20$,   \\   $   p=1000$,\\ $     n=1600$\end{tabular}} & $\text{MAB}^\dag$ & 0.04 & 0.11 & 0.03 & 0.03 & 0.03 & 0.03 & 0.03 & 0.03 \\
 &$\text{MAB}^\ddag$ & 0.04 & 0.11 & 0.03 & 0.03 & 0.03 & 0.03 & 0.03 & 0.03 \\
 & std & 0.08 & 0.29 & 0.09 & 0.04 & 0.04 & 0.09 & 0.09 & 0.09 \\
 & RMSE & 0.06 & 0.15 & 0.04 & 0.04 & 0.04 & 0.04 & 0.04 & 0.04 \\
\hline \multirow{4}{*}{\begin{tabular}[c]{@{}c@{}}$s=50$,   \\   $   p=400$,\\ $     n=200$\end{tabular}} & $\text{MAB}^\dag$ & 0.12 & 0.42 & 0.12 & 0.12 & 0.10 & 0.09 & 0.10 & 0.10 \\
 &$\text{MAB}^\ddag$ & 0.12 & 0.42 & 0.12 & 0.12 & 0.10 & 0.09 & 0.10 & 0.10 \\
 & std & 0.24 & 0.84 & 0.28 & 0.10 & 0.13 & 0.27 & 0.28 & 0.29 \\
 & RMSE & 0.18 & 0.56 & 0.16 & 0.16 & 0.14 & 0.14 & 0.15 & 0.14 \\
\hline \multirow{4}{*}{\begin{tabular}[c]{@{}c@{}}$s=50$,   \\   $   p=800$,\\ $     n=400$\end{tabular}} & $\text{MAB}^\dag$ & 0.08 & 0.28 & 0.07 & 0.07 & 0.06 & 0.05 & 0.06 & 0.05 \\
 &$\text{MAB}^\ddag$ & 0.08 & 0.28 & 0.07 & 0.07 & 0.06 & 0.05 & 0.06 & 0.05 \\
 & std & 0.17 & 0.60 & 0.20 & 0.07 & 0.09 & 0.19 & 0.20 & 0.21 \\
 & RMSE & 0.12 & 0.41 & 0.10 & 0.10 & 0.09 & 0.08 & 0.09 & 0.09 \\
\hline \multirow{4}{*}{\begin{tabular}[c]{@{}c@{}}$s=50$,   \\   $   p=1000$,\\ $     n=800$\end{tabular}} & $\text{MAB}^\dag$ & 0.05 & 0.18 & 0.04 & 0.04 & 0.04 & 0.04 & 0.04 & 0.04 \\
 &$\text{MAB}^\ddag$ & 0.05 & 0.18 & 0.04 & 0.04 & 0.04 & 0.04 & 0.04 & 0.04 \\
 & std & 0.13 & 0.43 & 0.15 & 0.05 & 0.06 & 0.14 & 0.14 & 0.15 \\
 & RMSE & 0.08 & 0.27 & 0.06 & 0.06 & 0.06 & 0.06 & 0.06 & 0.06 \\
\hline \multirow{4}{*}{\begin{tabular}[c]{@{}c@{}}$s=50$,   \\   $   p=1000$,\\ $     n=1600$\end{tabular}} & $\text{MAB}^\dag$ & 0.04 & 0.13 & 0.02 & 0.02 & 0.02 & 0.03 & 0.02 & 0.03 \\
 &$\text{MAB}^\ddag$ & 0.04 & 0.13 & 0.02 & 0.02 & 0.02 & 0.03 & 0.02 & 0.03 \\
 & std & 0.09 & 0.31 & 0.10 & 0.04 & 0.04 & 0.10 & 0.10 & 0.11 \\
 & RMSE & 0.06 & 0.19 & 0.04 & 0.04 & 0.04 & 0.04 & 0.04 & 0.04 \\
 \bottomrule
\end{longtable}

\end{document}